\pgfplotsset{compat=newest}
\newtheorem{theorem}{Theorem}
\newtheorem{corollary}[theorem]{Corollary}
\theoremstyle{definition}
\theoremstyle{lemma}
\newtheorem{lemma}[theorem]{Lemma}
\theoremstyle{remark}
\newtheorem{remark}[theorem]{Remark}
\numberwithin{theorem}{section}
\numberwithin{equation}{section}
\numberwithin{table}{section}
\numberwithin{figure}{section}
\begin{document}
\title[Time discretization schemes for hyperbolic PDEs on networks]{Time discretization schemes for hyperbolic systems on networks by $\eps$-expansion${}^{*}$}
\author[]{R.~Altmann$^\dagger$, C.~Zimmer$^{\ddagger}$}
\address{${}^{\dagger}$ Department of Mathematics, University of Augsburg, Universit\"atsstr.~14, 86159 Augsburg, Germany}
\address{${}^{\ddagger}$ Institute of Mathematics MA\,{}4-5, Technical University Berlin, Stra\ss e des 17.~Juni 136, 10623 Berlin, Germany}
\email{robert.altmann@math.uni-augsburg.de, zimmer@math.tu-berlin.de}
\thanks{${}^{*}$Research funded by the Deutsche Forschungsgemeinschaft (DFG, German Research Foundation) within the SFB 910, project number 163436311}
\date{\today}
\keywords{}
\begin{abstract}
We consider partial differential equations on networks with a small parameter $\eps$, which are hyperbolic for $\eps>0$ and parabolic for $\eps=0$. With a combination of an $\eps$-expansion and Runge-Kutta schemes for constrained systems of parabolic type, we derive a new class of time discretization schemes for hyperbolic systems on networks, which are constrained due to interconnection conditions. 
For the analysis we consider the coupled system equations as partial differential-algebraic equations based on the variational formulation of the problem. We discuss well-posedness of the resulting systems and estimate the error caused by the $\eps$-expansion.
\end{abstract}
%
%
\maketitle
{\tiny {\bf Key words.} differential equations on networks, coupled systems, hyperbolic equations, time discretization, PDAE}\\
\indent
{\tiny {\bf AMS subject classifications.}  {\bf 35L50}, {\bf 65J10}, {\bf 65M12}} 
%
%
\section{Introduction}
The propagation of pressure waves in a network of pipes~\cite{Osi87, BroGH11} as well as the electro-magnetic-energy propagation along a network of transmission lines~\cite{MagWTA00, GoeHS16} can be modeled as a coupled system of hyperbolic partial differential equations (PDE). On each edge of the network (representing, e.g., a pipe or transmission line) we consider a one-dimensional linear wave system with damping of the form
\begin{align*}
\dot p^e(t,x) + a^e(x) p^e(t,x) + \partial_x m^e(t,x) &= \g^e(t,x), \\
\eps\, \dot m^e(t,x) + \partial_x p^e(t,x) + d^e(x) m^e(t,x) &= \f^e(t,x).
\end{align*}
Here, $p^e$ and $m^e$ model the potential and flow variables of the system on a single edge~$e$. The non-negative parameters $a^e$, $d^e$ include linear damping, e.g., due to friction, and the source terms $\g^e$, $\f^e$ may encode, e.g., the slope of a pipe. For the parameter $\eps$ we assume in this paper that $0 \le \eps \ll 1$. In a gas network this would equal the product of the adiabatic coefficient and the square of the Mach number. We emphasize that setting $\eps=0$ results in a parabolic PDE whereas the PDE is hyperbolic for $\eps>0$. 

Considering a network as illustrated in Figure~\ref{fig_network}, the stated conservation and balance laws require additional coupling conditions. 
These reflect important physical properties similarly to the circuit laws of Kirchhoff. In particular, we demand continuous potentials and that the sum of flows is balanced at each junction. Thus, the overall system combines hyperbolic PDEs with explicitly stated constraints, cf.~\cite{JanT14, EggKLMM18}. Mathematically, the resulting model equals a {\em partial differential-algebraic equation} (PDAE), see~\cite{LamMT13} for an introduction. \medskip
%
\begin{figure}
	\centering
	\begin{tikzpicture}[scale=0.90]
	\draw[thick, ->] (0, 0) -- (2.5, 0);
	\node at (1.5, 0.2) {$e_1$}; 
	\draw[thick, ->] (3, 0) -- (5.5, 0.9);
	\node at (4.4, 0.75) {$e_2$};
	\draw[thick, ->] (3, 0) -- (5.5, -0.9);
	\node at (4.4, -0.75) {$e_3$};
	\draw[thick, ->] (6, 1) -- (8.5, 1);
	\node at (7.4, 1.2) {$e_4$};
	\draw[thick, ->] (6, -1) -- (8.55, 0.75);
	\node at (7.4, -0.4) {$e_5$};
	\draw[thick, ->] (6, -1) -- (10.5, -1);
	\node at (8.6, -1.3) {$e_6$};
	\node at (0, 0) [circle, draw, fill=light_blue] {$v_1$}; 
	\node at (3, 0) [circle, draw, fill=light_blue] {$v_2$}; 
	\node at (6, 1) [circle, draw, fill=light_blue] {$v_3$}; 
	\node at (6, -1) [circle, draw, fill=light_blue] {$v_4$}; 
	\node at (9, 1) [circle, draw, fill=light_blue] {$v_5$};
	\node at (11, -1) [circle, draw, fill=light_blue] {$v_6$};
	\end{tikzpicture} 
	\caption{Example of a network of pipes, represeted by a graph with vertices $\V = \{v_1, v_2, v_3, v_4, v_5, v_6\}$ and edges $\E = \{ e_1 = (v_1,v_2), e_2 = (v_2,v_3), e_3 = (v_2,v_4), e_4 = (v_3,v_5), e_5 = (v_4,v_5), e_6 = (v_4,v_6)\}$ .}
	\label{fig_network}
\end{figure}
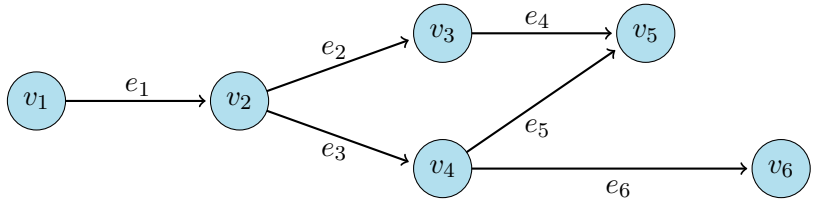
%

The aim of this paper is to derive time discretization schemes for the given class of PDAEs. Recently, time discretization schemes have been analyzed for the parabolic case ($\eps=0$) including Runge-Kutta methods~\cite{AltZ18} and discontinuous Galerkin methods~\cite{VouR18}. In general, one may say that the construction and analysis of numerical schemes need a combination of methods known from time-dependent PDEs, see, e.g.,~\cite{LubO95, EmmT10}, as well as strategies coming form the theory of differential-algebraic equations (DAE), cf.~\cite{HaiLR89, HaiW96, KunM06}.  

The foundation for the analysis of time discretization schemes are well-posedness results for the PDAE models. Hence, a major part of the paper discusses mild and classical solutions for different regularity assumptions on the right-hand side. For the particular case of gas networks with~$a^e=0$, well-posedness and exponential stability have been shown in~\cite{EggK18}. Therein, the connection to the parabolic system for $\eps=0$ is exploited. Similar techniques are applied in the present paper. \medskip

The paper is organized as follows. In Section~\ref{sect:prelim} we derive the model equations together with the constraints coming from coupling and boundary conditions. Further, we discuss two particular examples and needed function spaces. The variational formulation, which then leads to the considered PDAE system, is subject of Section~\ref{sect:formulation}. This includes uniqueness and existence results of mild and classical solutions. An important step towards the proposed time discretization scheme is then the consideration of the parabolic limit case, which we obtain for $\eps=0$. 

Section~\ref{sect:expansion} is devoted to the comparison of the two solutions coming from the hyperbolic and parabolic systems, respectively. For this, we consider the difference of the solutions for~$\eps>0$ and~$\eps=0$ and show that this is of order $\eps$ for appropriate initial values. In view of time discretization schemes of higher order, we also consider the second-order term in the $\eps$-expansion of the exact solution. We prove that this then yields a second-order approximation under certain additional conditions on the initial data. In Section~\ref{sect:index} we shortly comment on the differentiation index of the DAEs resulting from a spatial discretization of the coupled systems. Finally, we introduce in Section~\ref{sect:timeInt} time discretization schemes which result from a combination of the $\eps$-expansion of the solution and Runge-Kutta schemes. A numerical illustration of the results is given in Section~\ref{sect:numerics}.
%
%
\section{Model Equations and Preliminaries}\label{sect:prelim}
In this section, we introduce the hyperbolic system equations as well as the coupling conditions, which are directly given by the underlying network structure. Furthermore, we introduce the needed function spaces and the constraint operators. These will be used in the subsequent section for the operator formulation of the system equations.  
%
%
\subsection{Network geometry}\label{sect:prelim:network}
We consider a directed graph $\G = (\V, \E)$, which encodes the geometry of the underlying network, cf.~Figure~\ref{fig_network} for an exemplary illustration. The set of vertices $\V$ includes the junctions of the network whereas the edges in $\E$ represent interconnections, e.g., pipes in gas networks or transmission lines in power networks. On each edge $e\in\E$ we consider a linear hyperbolic PDE in one space dimension of the form 
\begin{subequations}
\label{sys_equations}
\begin{align}
	\dot p^e + a^e p^e + \partial_x m^e &= \g^e, \\
	\eps \dot m^e + \partial_x p^e + d^e m^e &= \f^e
\end{align}
\end{subequations}
with appropriate boundary conditions and given initial values for $p^e(0)$ and $m^e(0)$. The variable $p^e$ models a {\em potential} whereas $m^e$ is a {\em flow variable}. The parameters $a^e$ and $d^e$ include damping to the system equations and are assumed to be constant in time. However, they may be space-dependent within its bounds 
\[
	0 \le a^e(x) \le \aM, \qquad
	0 < \dm \le d^e(x) \le \dM < \infty
\]
for almost every $x\in e$ and all~$e\in\E$. An essential assumption of this paper is that the parameter $\eps$ satisfies $\eps \ll 1$, which accounts for the different time scales of the two variables. We emphasize that setting $\eps=0$ results in a parabolic PDE whereas the PDE is of hyperbolic type for $\eps>0$. 

For the specification of boundary and coupling conditions we need to distinguish two kinds of vertices: The set of boundary vertices for which the potential variables are prescribed is denoted by~$\Vin$ and defines $\Nin:=|\Vin|\ge 1$. For the remaining nodes in $\Vout$ we either have coupling (at interior vertices) or boundary conditions for the flow variables. Further, we introduce $\Nout := |\Vout|$ such that $\Nin+\Nout$ equals the total number of vertices in $\G$. 
Given any vertex $v\in\V$, we define by $\E(v)$ the set of edges, which have $v$ as a vertex. Finally, the number $n^e(v) = \pm 1$ encodes the direction of the edge $e$. In particular, this means that the graph is directed from a vertex $v$ to  $v'$ if $n^e(v) = -n^e(v') = -1$. For a boundary node $v\in\V$ we assume that all adjacent edges are equally directed, i.e., the sign $n^e(v)$ is identical for all edges $e\in\E(v)$. Consequently, we either have only in- or only outflow at a boundary node. 

The system equations on the network $\G$ are now given edgewise by \eqref{sys_equations} in combination with certain boundary and coupling conditions. For this, we define the functions $p$ and $m$ elementwise by $p^e$ and $m^e$, respectively. Similarly, the damping parameters $a$ and $d$ are edgewise defined through $a^e$ and $d^e$, respectively. Note that this implies uniform bounds on $d$ in terms of $\dm$ and $\dM$. The coupling conditions are similar to the Kirchhoff's circuit laws, i.e., we assume that $p$ is continuous and that for every vertex $v \in \Vout$ we have 
\begin{align}
\label{constraint}
  \sum_{e\in \E(v)} n^e(v) m^e(v) = \r(v). 
\end{align}
The given function $\r\colon [0,T] \to \R^{\Nout}$ models the action of consumers, dissipation, or a lossless connection at interior junctions. Note that we use here an identification of $\R^{\Nout}$ and the nodes $\Vout$, which allows to evaluate $\r$ at a node $v\in \Vout$. 
The Dirichlet boundary conditions modeling the inflow are given by
\begin{align}
\label{boundary}
  p(v) = \h(v) 
\end{align} 
for every $v\in\Vin$ and prescribed $\h\colon [0,T] \to \R^\Nin$. As before, we use here an identification of $\R^\Nin$ and $\Vin$.
%
%
\subsection{Examples}
We discuss two particular applications, which lead to coupled systems of the given form with a small parameter $\eps$. 
\subsubsection*{Gas networks}
In the example of a gas network, each edge $e\in\E$ corresponds to a pipe in which we consider the propagation of pressure waves. Thus, the variables $p$ and $m$ stand for the pressure and the mass flux, respectively. Under certain simplifying assumptions this is governed by equations of the form~\eqref{sys_equations}, cf.~\cite{EggK17ppt}. In this case, the parameter~$\eps$ equals the product of the adiabatic coefficient and the square of the Mach number and is of order~$10^{-3}$, cf.~\cite{BroGH11}.
Further, we have $a^e=0$ and $d^e$ includes damping due to friction at the walls of the pipe. The right-hand side~$\f^e$ includes the slope of a pipe~$e$.

For the case that $\Vin$ includes all boundary nodes and $\r \equiv 0$ the existence of classical solutions has been discussed in~\cite{EggK17ppt}. The extension of this result to~$\r\neq 0$ is straightforward if~$\r$ is independent of time. In this paper, however, we allow more general boundary conditions and time-dependent right-hand sides, which includes the boundary data $\r$. 
\subsubsection*{Power networks} 
In a power network, each edge $e\in\E$ corresponds to one transmission line whereas a node models a customer, power supplier, or an interconnection. The corresponding variables then model the voltage ($p$) and the current ($m$). The hyperbolic equation, which is considered on a single edge is also called the {\em telegrapher's equation}, cf.~\cite{ MagWTA00, GoeHS16}. 
%
The power loss due to the resistance of the underlying material is modeled trough the parameter $d^e$. On the other hand, $a^e$ describes the capacitance of the transmission line and depends on its length. 
Also in this kind of models small values of~$\eps$ emerge. 
%
%
\subsection{Function spaces}
For a proper weak formulation of system~\eqref{sys_equations} we consider (piecewise) Sobolev spaces. 
First, we introduce the space $L^2(\E)$ consisting of all functions which are edgewise in $L^2(e)$. The corresponding inner product reads
\[
  (\cdot, \cdot)
  := (\cdot, \cdot)_{L^2(\E)}
  = \sum\nolimits_{e\in \E} (\cdot, \cdot)_{L^2(e)} 
\]
and defines the norm $\Vert \cdot\Vert := \Vert\cdot\Vert_{L^2(\E)} =  (\cdot, \cdot)^{1/2}$. The space of broken $H^1$-functions is denoted by $H^1(\E)$. This means that $v\in H^1(\E)$ if and only if $v|_e \in H^1(e)$ for all $e\in\E$. Due to well-known embedding theorems \cite[Ch.~21.3]{Zei90a}, such a function is continuous on all edges but may jump at junctions. 

For the subspace of globally continuous functions in $H^1(\E)$ we write $H^1(\bE)$. This notion is motivated by the fact that globally continuous functions, which are edgewise in $H^1$, are globally in $H^1$. We emphasize that the function space $H^1(\bE)$ is densely embedded in $L^2(\E)$. Further, $H_0^1(\bE)$ denotes the Sobolev space of functions in $H^1(\bE)$ with vanishing function values at the boundary nodes in $\Vin$. 
We emphasize, that this includes only the vertices for which we prescribe the potential $p$ in form of boundary conditions. Finally, we denote the dual space of $H_0^1(\bE)$ by $H^{-1}(\bE)$. 

As we consider time-dependent problems, appropriate solution spaces are given by Sobolev-Bochner spaces, cf.~\cite[Ch.~7]{Rou05}. 
Denoting the space of quadratic Bochner integrable functions with values taken in $V$ by $L^2(0,T;V)$, we use correspondingly the notion $H^m(0,T;V)$, $m\in\N$, for functions with higher regularity in time. Moreover, we define for two Sobolev spaces $V_1 \hookrightarrow V_2$ the space 
\[
  W(0,T;V_1,V_2) 
  := \big\{ v\in L^2(0,T;V_1)\ |\ \dot v \text{ exists in } L^2(0,T;V_2) \big\}. 
\]
%
%
%
\subsection{Constraint operators}\label{sect:prelim:constraints}
We introduce several constraint operators for the incorporation of the constraints on $m$, namely \eqref{constraint}, as well as the boundary conditions for $p$. 
%
First, we define the operator $\Di\colon H^1(\E)\to \R^{\Nout}$ by 
\[
  (\Di m)(v)
  := \sum\nolimits_{e\in \E(v)} n^e(v) m^e(v)  
\]
for a vertex $v\in\Vout$. Recall that we use a one-to-one correspondence of nodes in~$\Vout$ and the components of a vector in~$\R^{\Nout}$, as $\Nout$ equals the corresponding number of vertices. It is easy to see that the operator $\Di$ is surjective. Its dual operator $\Di^*\colon\R^{\Nout}\to [H^1(\E)]^*$ is defined through 
\[
  \Di^*\lambda 
 := \langle \Di^*\lambda, \cdot\,\rangle
  = (\Di\, \cdot\, , \lambda) 
  = (\Di\, \cdot\, )^T  \lambda
\]
for $\lambda \in \R^{\Nout}$. 

Second, we introduce $\De\colon H^1(\E)\to \R^{\Nin}$, which is defined similarly to $\Di$ but on the boundary nodes of the network with inflow. Thus, we set  
\[
  (\De p)(v)
  := \sum\nolimits_{e\in \E(v)} n^e(v) p^e(v) 
\]
for a boundary node $v\in\Vin$. Note that also $\De$ defines a surjective operator. The corresponding dual operator reads $\Ded\colon\R^{\Nin}\to [H^1(\E)]^*$ and is defined for $\lambda \in \R^\Nin$ through $\Ded\lambda = (\De\, \cdot\, , \lambda)$. 

In the following subsections we will also need function evaluations of $H^1(\bE)$-functions at the nodes of the network. We denote this operation by $\Ce\colon H^1(\bE)\to \R^{\Nin}$ for nodes with inflow and $\Ci\colon H^1(\bE)\to \R^{\Nout}$ for the remaining nodes. 
Obviously, also the operators $\Ci$ and $\Ce$ are surjective. The following lemma shows that all four operators are even inf-sup stable, since they map into a finite-dimensional space. 
\begin{lemma}
\label{lem_infsup}
Let $X$ be an arbitrary Banach space and the operator $\S \colon X \to \R^n$ linear, continuous, and surjective. Then $\S$ is inf-sup stable, meaning that there exists a positive constant $\beta>0$ with 
\[
  \adjustlimits\inf_{\mu\in \R^n}\sup_{v\in X} 
  \frac{(\S v, \mu)}{\Vert v\Vert_{X} \Vert \mu\Vert_{\R^n}} \ge \beta. 
\]
\end{lemma}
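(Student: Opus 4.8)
The plan is to reformulate the inf-sup condition in terms of the adjoint operator $\S^* \colon \R^n \to X^*$ and then exploit the finite dimension of the target space $\R^n$ through a compactness argument. First I would fix an arbitrary nonzero $\mu \in \R^n$ and observe that $v \mapsto (\S v, \mu)$ is a bounded linear functional on $X$, namely $\S^*\mu \in X^*$, where continuity is inherited from that of $\S$. Since replacing $v$ by $-v$ does not change the supremum, the definition of the dual norm gives
\[
  \sup_{v\in X} \frac{(\S v, \mu)}{\Vert v\Vert_X}
  = \Vert \S^*\mu\Vert_{X^*}.
\]
Hence the claimed estimate is equivalent to the existence of $\beta>0$ with $\Vert \S^*\mu\Vert_{X^*} \ge \beta\, \Vert\mu\Vert_{\R^n}$ for all $\mu\in\R^n$, i.e., to $\S^*$ being bounded below.

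Next I would show that $\S^*$ is injective. If $\S^*\mu = 0$, then $(\S v, \mu) = 0$ for every $v\in X$; since $\S$ is surjective, $\S v$ ranges over all of $\R^n$, so $\mu$ is orthogonal to every vector of $\R^n$ and therefore $\mu=0$. With injectivity at hand, I would close the argument by compactness: the map $\mu \mapsto \Vert \S^*\mu\Vert_{X^*}$ is continuous on $\R^n$, and the unit sphere $\{\mu\in\R^n : \Vert\mu\Vert_{\R^n}=1\}$ is compact because $\R^n$ is finite-dimensional. Thus the map attains a minimum $\beta$ on the sphere, and this minimum is strictly positive by the injectivity of $\S^*$. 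By homogeneity the bound $\Vert \S^*\mu\Vert_{X^*}\ge \beta\,\Vert\mu\Vert_{\R^n}$ then holds for all $\mu\in\R^n$, which is precisely the asserted inf-sup estimate (understood, as usual, with the infimum taken over $\mu\neq 0$).

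The only genuinely delicate point is the passage from surjectivity of $\S$ to a \emph{uniform} lower bound for $\S^*$. In a general Banach space this would require invoking the closed range theorem, but the finite dimensionality of the codomain $\R^n$ removes that difficulty: surjectivity directly yields injectivity of $\S^*$, and compactness of the unit sphere converts the resulting pointwise positivity into a single constant $\beta>0$. I therefore expect no further obstacles beyond verifying these elementary identifications.
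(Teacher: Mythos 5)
Your proof is correct, but it follows a genuinely different route from the paper. The paper argues constructively: for each canonical basis vector $e_i$ of $\R^n$ it picks a preimage $v_i$ with $\S v_i = e_i$, then for a given $\mu$ builds the explicit test function $\tilde v = \sum_i \mu_i v_i$, for which $(\S\tilde v,\mu) = \|\mu\|^2_{\R^n}$ and $\|\tilde v\|_X \le \sqrt{n}\,\max_i\|v_i\|_X\,\|\mu\|_{\R^n}$; this yields the concrete inf-sup constant $\beta = \big(\sqrt{n}\,\max_i \|v_i\|_X\big)^{-1}$. You instead identify the inner supremum with the dual norm $\|\S^*\mu\|_{X^*}$, deduce injectivity of $\S^*$ from surjectivity of $\S$, and convert pointwise positivity into a uniform bound by compactness of the unit sphere in $\R^n$ together with homogeneity. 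Both arguments hinge on the finite dimension of the codomain, but they buy different things: your duality argument produces the \emph{optimal} constant $\beta = \min_{\|\mu\|_{\R^n}=1}\|\S^*\mu\|_{X^*}$, although non-constructively, and it makes transparent why no closed range theorem is needed; the paper's argument is more elementary and hands you an explicit, if generally suboptimal, value of $\beta$ in terms of chosen preimages. Your handling of the degenerate cases (excluding $\mu=0$, and the sign symmetry $v \mapsto -v$ that turns the signed supremum into the dual norm) is also correct, so there is no gap.
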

\begin{proof}
Let $\mu\in \R^{n} \setminus \{0\}$ be arbitrary and $e_i$ the $i$-th Cartesian unit vector in $\R^n$, $i=1,\ldots,n$. Since $\S$ is surjective, there exist linearly independent vectors $v_i\in X$ with $\S v_i = e_i$. For  $\tilde{v}:=\sum_{i=1}^n \mu_i v_i$ we then obtain the estimate 
\[ 
  \Vert \tilde{v} \Vert_{X} 
  \leq \sum_{i=1}^n |\mu_i| \Vert v_i \Vert_{X} 
  \leq \max_{i=1,\dots, n} \Vert v_i \Vert_{X} \sum_{i=j}^n |\mu_j|
  \leq \sqrt{n} \max_{i=1,\dots, n} \Vert v_i \Vert_{X}  \| \mu \|_{\R^n}.
\]
Therefore, it holds for $\S$ that
\[ 
  (\S \tilde{v}, \mu)
  = \sum_{i=1}^n  (\mu_i e_i) \cdot \mu 
  = \sum_{i=1}^n  \mu_i^2 
  = \|\mu\|^2_{\R^{n}} 
  \geq \frac{1}{\sqrt{n} \max_{i=1,\dots, n} \Vert v_i \Vert_{X}} \Vert \widetilde{v}\Vert_{X}\, \Vert \mu\Vert_{\R^n}. 
\]
Since $\mu$ was chosen arbitrarily, a lower bound for the inf-sup constant $\beta$ is given by $(\sqrt{n} \max_{i=1,\dots, n} \Vert v_i \Vert_{X})^{-1} >0$.  
\end{proof}
The shown inf-sub stability is a crucial property for the well-posedness of the PDAE in the weak form, as the incorporation of the constraints will lead to a saddle point structure of the system equations. 
Further, it implies the existence of right-inverses $\Ci^-\colon \R^{\Nout} \to H^1(\bE)$ and $\Ce^-\colon \R^{\Nin} \to H^1(\bE)$, which satisfy $\Ci\Ci^- = \id_{\R^\Nout}$ and $\Ce\Ce^- = \id_{\R^\Nin}$. 

In the following section we discuss the weak formulation of system~\eqref{sys_equations} including boundary and coupling conditions. Throughout this paper, we use for estimates the notion $a \lesssim b$ for the existence of a generic constant $c>0$ such that $a \le cb$.   				
\section{Operator Formulation}\label{sect:formulation}
There are two possibilities for a weak formulation of the considered system \eqref{sys_equations}-\eqref{boundary}. First, one may consider the formulation, which is weak in $p$. This means that $p$ takes values in $L^2(\E)$ only and that the boundary conditions cannot be enforced pointwise. Second, we can assume that $p$ is continuous but the mass flux $m$ is in $L^2(\E)$. As a result, the coupling condition \eqref{constraint} cannot be formulated pointwise and needs to be incorporated in a weak sense. 

In this section, we show that the second approach is advantageous and pass over to the operator form of the system equations. Further, we discuss existence results of mild and classical solutions. We emphasize that, in this section, the property $\eps \ll 1$ is not of importance such that $\eps$ may be replaced by any other positive constant. 
In the parabolic limit case with $\eps=0$ we discuss the existence of weak solutions. 
%
%
\subsection{A first weak formulation}\label{sect:formulation:A}
In this subsection, we discuss the weak formulation with \eqref{constraint} enforced by means of an explicit constraint, cf.~\cite[App.~I]{EggKLMM18}. Assume for a moment that $p\in H^1(\bE)$ with $p(v) = \h(v)$ for $v\in\Vin$. Then, integration by parts yields for a test function $\u\in H^1(\E)$, 
\begin{align*}
  (\px p, \u)
  = \sum_{e\in\E} (\px p^e, \u^e)_e
  &= - \sum_{e\in\E} (p^e, \px\u^e)_e + \adjustlimits\sum_{v\in\V}\sum_{e\in\E(v)} p(v) \u^e(v) n^e(v) \\
  &= -(p, \px\u) + \sum_{v\in\Vout} (\Di\u)_v p(v) + ( \De\u, \h ).
\end{align*}
In this subsection, we only assume $p(t)\in L^2(\E)$ and introduce a Lagrange multiplier~$\kappa$ such that 
\[
  (\px p, \u)
  = -(p, \px\u) + \langle \Di^*\kappa, \u\rangle + \langle \Ded\h, \u \rangle.
\]  
With this, we obtain the following weak formulation: 
Given right-hand sides $\f\colon [0,T] \to [H^1(\E)]^*$, $\g\colon [0,T] \to L^2(\E)$, $\h\colon [0,T] \to \R^{\Nin}$, $\r\colon [0,T] \to \R^{\Nout}$ and initial data $p(0) \in L^2(\E)$ and $m(0) \in H^1(\E)$, we search for abstract functions  $p\colon[0,T]\to L^2(\E)$, $m\colon[0,T]\to H^1(\E)$, and $\kappa\colon[0,T]\to \R^{\Nout}$ such that 
\begin{subequations}
\label{eqn_weak_A_eps}
\begin{align}
  (\dot p, q) + (a p, q) + (\px m, q) &= (\g, q), \\
  \eps\, (\dot m, \u) - (p, \px \u) + (dm, \u) + \langle \Di^*\kappa, \u\rangle &= \langle \f, \u \rangle - \langle \Ded\h, \u\rangle, \\
  \langle \Di m, \mu\rangle &= ( \r, \mu )
\end{align}
\end{subequations}
for all test functions $q\in L^2(\E)$, $\u\in H^1(\E)$, and $\mu\in \R^{\Nout}$.
\begin{remark}
The partial derivative $\px$ applied to a function in $H^1(\E)$ denotes the edgewise derivative w.r.t.~the variable $x$.  
\end{remark}
\begin{remark}
We emphasize that the continuity of $p$, which is part of the classical formulation, is not part of the weak formulation \eqref{eqn_weak_A_eps}. Note, however, that already $p(t) \in H^1(\E)$ implies continuity at the interior vertices as well as the boundary conditions $p(t, v) = \h(t, v)$ for $v\in\Vin$ and all $t \in [0,T]$. 
\end{remark}
Within this paper, we will consider the alternative weak formulation, which is introduced in the following subsection. This is preferable, since it has a straightforward extension to multiple dimensions -- leading to a term $\ddiv (\nabla p)$ instead of $\nabla(\ddiv m)$ -- and is more robust in terms of $\eps$. 
Latter can be seen in the index analysis of the semi-discrete system, cf.~Section~\ref{sect:index}, and needed regularity assumptions in the limit case. Considering $\eps=0$ and a given initial value for $p_0$, we obtain the equation $d m_0(0) = \f(0) + \px p_0(0)$.  
This means that we need $p_0(0) \in H^1(\E)$ although we consider the weak formulation with $p$ taking values in $L^2(\E)$ only. 
%
%
\subsection{A second weak formulation}\label{sect:formulation:B}
The second possibility of a weak formulation allows the mass flux $m$ to take values in $L^2(\E)$. Since this disables us to explicitly enforce the constraint \eqref{constraint}, we include this only weakly. On the other hand, we model $p$ with values in $H^1(\bE)$ and thus, as a continuous function such that boundary conditions can be included explicitly in form of a constraint. 

To obtain a weak formulation, we again integrate by parts. More precisely, we derive for $m\in H^1(\E)$, which satisfies $\Di m  = \r$, and a test function $q\in H^1(\bE)$,   
\begin{align*}
  (\px m, q)
  = \sum_{e\in\E} (\px m^e, q^e)_e
  &= - \sum_{e\in\E} (m^e, \px q^e)_e + \adjustlimits\sum_{v\in\V}\sum_{e\in\E(v)} q(v) m^e(v) n^e(v) \\
  &= -(m, \px q) + \sum_{v\in\Vout} (\Di m)_v (\Ci q)_v + \sum_{v\in\Vin} (\De m)_v (\Ce q)_v \\
  &= -(m, \px q) + (\r, \Ci q) + (\De m, \Ce q).
\end{align*}
If we introduce a Lagrange multiplier in place of $\De m$, then we obtain the following weak formulation: Given right-hand sides $\f\colon [0,T]\to L^2(\E)$, $\g\colon [0,T]\to [H^1(\bE)]^*$, $\h\colon [0,T]\to \R^{\Nin}$, $\r\colon [0,T]\to \R^{\Nout}$ and initial data $p(0) \in H^1(\bE)$ and $m(0) \in L^2(\E)$, we search for $p\colon[0,T]\to H^1(\bE)$, $m\colon[0,T]\to L^2(\E)$, and $\lambda\colon[0,T]\to \R^{\Nin}$ such that 
\begin{subequations}
\label{eqn_weak_B_eps}
\begin{align}
	(\dot p, q) + (a p, q) - (m, \px q) + \langle \Ce^* \lambda, q\rangle  &= \langle \g, q\rangle - \langle \Ci^*\r, q\rangle, \\
	\eps\, (\dot m, \u) +  (\px p, \u) + (dm, \u) &= (\f, \u), \\
	\langle\Ce p, \mu\rangle &= (\h, \mu)
\end{align}
\end{subequations}
for all test functions $q\in H^1(\bE)$, $\u\in L^2(\E)$, and $\mu\in \R^{\Nin}$.
\begin{remark}
The derivation of the weak formulation~\eqref{eqn_weak_B_eps} shows that the Lagrange multiplier $\lambda$ has a physical interpretation, namely the weighted sum of the boundary values of $m$.	
\end{remark}
We obtain a more compact form of \eqref{eqn_weak_B_eps} if we consider corresponding operators in the dual spaces of $H^1(\bE)$, $L^2(\E)$, and $\R^{\Nin}$. For this, we introduce $\calK$ for the (edgewise) partial derivative. We understand this as an operator $\calK\colon H^1(\bE) \to [L^2(\E)]^*$ and it is defined for all $q\in H^1(\bE)$ through 
\[
  \langle \calK q, m\rangle 
  := (\px q, m). 
\]
This then leads to the operator formulation 
\begin{subequations}
\label{eqn_op_B_eps}
\begin{alignat}{5}
	\dot p &\ +\ &  a p &\ -\ &\calK^* m & + \Ce^*\lambda &\ =\ &\g -\Ci^*\r  &&\qquad \text{in }[H^1(\bE)]^*, \label{eqn_op_B_eps_a} \\
	\eps\, \dot m  &\ +\ &\calK p &\ +\ &d m &   &\ =\ & \f &&\qquad \text{in }[L^2(\E)]^*, \label{eqn_op_B_eps_b} \\
	& & \Ce p & & &  &\ =\ &  \h &&\qquad \text{in }\R^{\Nin}. \label{eqn_op_B_eps_c}
\end{alignat}
\end{subequations}
Before considering the question of the existence of solutions to~\eqref{eqn_op_B_eps}, we mention that the results of the following sections remain valid for certain generalizations, cf.~Appendix~\ref{app_generalization}.
This applies particularly to the damping terms and the constraint equation~\eqref{eqn_op_B_eps_c}.  
%
%
\subsection{Existence of solutions}
It turns out that the following auxiliary problem is helpful for the upcoming analysis, 
\begin{subequations}
\label{eqn_op_B_station}
	\begin{alignat}{5}
		a \pb &\ -\ &\calK^* \mb & + \Ce^*\lb &\ =\ &-\Ci^*\r  &&\qquad \text{in }[H^1(\bE)]^*, \label{eqn_op_B_station_a}\\
		\calK \pb &\ +\ &d \mb &   &\ =\ &  0 &&\qquad \text{in }[L^2(\E)]^*, \label{eqn_op_B_station_b}\\
		\Ce \pb & & &  &\ =\ &  0 &&\qquad \text{in }\R^{\Nin}. \label{eqn_op_B_station_c}
	\end{alignat}
\end{subequations}
Note that the system does not include derivatives of the variables but that the right-hand side may still be time-dependent. To show the existence of a solution $(\pb,\mb,\lb)$ we first consider the time-independent case. For this, we define $\wlap$ as the weak Laplacian. 
\begin{lemma}
\label{lem_wlap}
The operator $\wlap:= \calK^\ast d^{-1} \calK \colon H^1(\bE) \to [H^1(\bE)]^\ast$ is linear, continuous, and non-negative. Furthermore, its restriction to $H^1_0(\bE)$ is elliptic, i.e., there exists a constant $\cLap>0$ such that for all $q \in H^1_0(\bE)$ it holds that
\[ 
  \langle \wlap q , q \rangle \ge \cLap\, \| q\|^2_{H^1(\bE)}. 
\]
\end{lemma}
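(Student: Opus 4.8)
The plan is to first make the operator $\wlap$ explicit as a bilinear form and then to reduce the ellipticity claim to a Poincar\'e-type inequality on the network. Identifying $L^2(\E)$ with its dual via the Riesz isomorphism, the definition $\langle\calK q,m\rangle=(\px q,m)$ shows that $\calK q$ is represented by $\px q\in L^2(\E)$, that $d^{-1}$ acts as the (bounded) multiplication operator on $L^2(\E)$, and that the adjoint satisfies $\langle\calK^\ast w,v\rangle=(w,\px v)$. Composing these three maps yields, for all $q,v\in H^1(\bE)$,
\[
  \langle\wlap q,v\rangle=(d^{-1}\px q,\px v)=\sum_{e\in\E}\int_e d^{-1}\,\px q\,\px v\dx .
\]
Linearity is immediate from this representation.

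For continuity and non-negativity I would only use the pointwise bounds on $d$. Since $0<\dm\le d\le\dM$, the multiplier obeys $\dM^{-1}\le d^{-1}\le\dm^{-1}$. The Cauchy--Schwarz inequality then gives $|\langle\wlap q,v\rangle|\le\dm^{-1}\|\px q\|\,\|\px v\|\le\dm^{-1}\|q\|_{H^1(\bE)}\|v\|_{H^1(\bE)}$, so that $\|\wlap q\|_{[H^1(\bE)]^\ast}\le\dm^{-1}\|q\|_{H^1(\bE)}$ and $\wlap$ is continuous. Non-negativity follows by testing with $v=q$, since $\langle\wlap q,q\rangle=(d^{-1}\px q,\px q)\ge\dM^{-1}\|\px q\|^2\ge0$.

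For the ellipticity on $H^1_0(\bE)$ the same lower bound gives $\langle\wlap q,q\rangle\ge\dM^{-1}\|\px q\|^2$, so it remains to show a Poincar\'e inequality: there is $C_P>0$ with $\|q\|_{H^1(\bE)}^2\le C_P\|\px q\|^2$ for all $q\in H^1_0(\bE)$, whence $\cLap:=(C_P\dM)^{-1}$ does the job. This Poincar\'e estimate is the heart of the argument and the step I expect to be the main obstacle, because it is exactly where the graph structure and the boundary condition at $\Vin$ enter. I would establish it using that a function in $H^1_0(\bE)$ is globally continuous, vanishes at the (nonempty) set of inflow vertices $\Vin$, and --- under the standing assumption that each connected component of the network contains at least one such Dirichlet vertex --- can be reconstructed from its edgewise derivative by integration along a path starting at such a vertex. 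Concretely, writing $q$ at the end of any edge as $q$ at the start plus $\int\px q$, the fundamental theorem of calculus together with Cauchy--Schwarz controls all vertex values, and hence $\|q\|_{L^2(\E)}$, by $\|\px q\|$; adding $\|\px q\|^2$ then yields the bound on the full $H^1(\bE)$-norm. Alternatively, a compactness argument via the Rellich embedding $H^1(\bE)\hookrightarrow\hookrightarrow L^2(\E)$ (valid since the edges are bounded intervals) gives the same conclusion by contradiction: a normalized sequence with $\|\px q_n\|\to0$ would converge in $L^2(\E)$ to a function that is constant on each component and vanishes at $\Vin$, hence is zero, contradicting the normalization.
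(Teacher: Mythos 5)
Your proposal is correct and follows essentially the same route as the paper, whose proof is only a two-line sketch observing that $\wlap$ is a weak Laplacian with a non-negative bounded coefficient and that ellipticity on $H^1_0(\bE)$ follows from the Poincar\'e--Friedrichs inequality; your argument simply fills in the details (the explicit bilinear form $\langle\wlap q,v\rangle=(d^{-1}\px q,\px v)$, the bounds via $\dm,\dM$, and a proof of the network Poincar\'e inequality by path integration or by compactness). Your explicit caveat that every connected component of the graph must contain a vertex of $\Vin$ is indeed necessary for the Poincar\'e step and is left implicit in the paper, so flagging it is a point in your favor rather than a deviation.
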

\begin{proof}
As $\wlap$ is the weak Laplacian with an additional non-negative coefficient $d$, the properties are easy to show. The ellipticity follows by the Poincar{\'e}-Friedrichs inequality. 
\end{proof}
\begin{lemma}[Constant right-hand sides]
\label{lem_op_B_station}
For given $\r \in \R^{\Nout}$, system~\eqref{eqn_op_B_station} has a unique solution
$$ \big(\pb,\mb,\lb \big) \in H^1_0(\bE) \times L^2(\E) \times \R^{\Nin},$$
which depends linearly and continuously on the data, i.e., $ \|\pb\|_{H^1(\bE)} + \|\mb\| + | \lb | \lesssim  |\r |$.
\end{lemma}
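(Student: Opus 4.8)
The plan is to eliminate the flow variable $\mb$ from~\eqref{eqn_op_B_station}, to recognise the remaining equations in $(\pb,\lb)$ as a linear saddle point problem, and to apply the classical theory of Brezzi. The two ingredients it requires are already at hand: the ellipticity of the weak Laplacian on $H^1_0(\bE)$ from Lemma~\ref{lem_wlap} provides coercivity on the kernel of the constraint, and the inf-sup stability of $\Ce$ from Lemma~\ref{lem_infsup} provides the compatibility condition needed for the multiplier~$\lb$.

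First I would exploit equation~\eqref{eqn_op_B_station_b}. Since the damping coefficient $d$ is bounded from below and above by $\dm$ and $\dM$, multiplication by $d$ is boundedly invertible on $L^2(\E)$; identifying $\calK\pb\in[L^2(\E)]^*$ with its Riesz representative $\px\pb\in L^2(\E)$, the equation yields $\mb = -d^{-1}\calK\pb$ together with $\|\mb\|\lesssim\|\pb\|_{H^1(\bE)}$. Substituting this into~\eqref{eqn_op_B_station_a} and using $-\calK^*\mb = \calK^* d^{-1}\calK\pb = \wlap\pb$, the system collapses to the saddle point problem
\begin{align*}
  (a+\wlap)\,\pb + \Ce^*\lb &= -\Ci^*\r \qquad \text{in } [H^1(\bE)]^*, \\
  \Ce\,\pb &= 0 \qquad\quad\ \ \text{in } \R^{\Nin}
\end{align*}
for $(\pb,\lb)\in H^1(\bE)\times\R^{\Nin}$ with the homogeneous constraint $\Ce\pb=0$.

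Because the kernel of $\Ce$ is precisely $H^1_0(\bE)$, Lemma~\ref{lem_wlap} gives $\langle (a+\wlap)q,q\rangle \ge \langle \wlap q, q\rangle \ge \cLap\,\|q\|^2_{H^1(\bE)}$ for all $q$ in this kernel, where I use $a\ge 0$; this is the coercivity on the kernel. Lemma~\ref{lem_infsup} applied to the continuous surjection $\Ce$ supplies the inf-sup condition. By Brezzi's theorem the saddle point problem then admits a unique solution $(\pb,\lb)\in H^1_0(\bE)\times\R^{\Nin}$ depending continuously on the data, i.e.\ $\|\pb\|_{H^1(\bE)} + |\lb| \lesssim \|\Ci^*\r\|_{[H^1(\bE)]^*} \lesssim |\r|$. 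Concretely, $\pb$ is obtained from Lax-Milgram applied to the coercive form on $H^1_0(\bE)$, and $\lb$ is recovered from the inf-sup stability, since the residual $-\Ci^*\r - (a+\wlap)\pb$ annihilates $\Kern\Ce$ and therefore lies in the range of $\Ce^*$, on which $\Ce^*$ is injective. Combining this with the bound for $\mb$ gives the asserted continuous dependence.

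The routine parts are merely the continuity and boundedness of the operators $a$, $\calK$, $\Ce$, $\Ci$ and their duals. The one point deserving care is that coercivity holds only on the kernel $H^1_0(\bE)$ and not on all of $H^1(\bE)$: indeed $a$ may vanish identically, as in the gas network example, so the ellipticity of $\wlap$ — which rests on the Poincar\'e--Friedrichs inequality and hence genuinely on the boundary conditions encoded by $\Ce$ — is what makes the argument work. This is exactly why the constrained saddle point formulation, rather than a direct application of Lax-Milgram on the full space, is the natural framework here.
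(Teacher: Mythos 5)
Your proposal is correct and follows essentially the same route as the paper: insert \eqref{eqn_op_B_station_b} into \eqref{eqn_op_B_station_a} to obtain the saddle point system for $(\pb,\lb)$ with constraint $\Ce\pb=0$, solve it by Brezzi's theory (the paper's ``standard arguments'' citing \cite[Ch.~II.1.1]{BreF91}) using the ellipticity of $\wlap$ on $H^1_0(\bE)=\Kern\Ce$ from Lemma~\ref{lem_wlap} and the inf-sup stability of $\Ce$ from Lemma~\ref{lem_infsup}, and then recover $\mb=-d^{-1}\calK\pb$ together with its bound. Your write-up merely makes explicit the coercivity-on-the-kernel and multiplier-recovery steps that the paper leaves implicit in its citation.
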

\begin{proof}
Since equation~\eqref{eqn_op_B_station_b} is stated in $L^2(\E)$, we can insert this equation into~\eqref{eqn_op_B_station_a}, which results in 
\begin{alignat*}{5}
	(\wlap + a)\pb&\ + \Ce^*\lb &\ =\ &-\Ci^*\r  &&\qquad \text{in } [H^1(\bE)]^*,\\
	\Ce \pb&  &\ =\ & 0 &&\qquad \text{in } \R^{\Nin}.
\end{alignat*}
%
By standard arguments~\cite[Ch.~II.1.1]{BreF91} this system has a unique solution $\pb \in H^1_0(\bE)$, $\lambda \in \R^{\Nin}$, which is bounded in terms of $r$. The existence of $\mb$ and the stability bound then follow by $\mb = - d^{-1} \calK \pb$.
\end{proof}
As an immediate consequence we get the following existence result for system~\eqref{eqn_op_B_station} with a time-dependent right-hand side.
\begin{corollary}
\label{cor_op_B_station}
Consider $\r \in H^m(0,T;\R^{\Nout})$ for some $m\in\N$. Then, system~\eqref{eqn_op_B_station} has a unique solution 
\[ 
	\big(\pb,\mb,\lb \big) 
	\, \in\,  H^m(0,T; H^1_0(\bE)) \times H^m(0,T; L^2(\E)) \times H^m(0,T;\R^{\Nin}),
\]
which is bounded in terms of~$\r$.
\end{corollary}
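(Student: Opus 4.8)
The plan is to exploit that system~\eqref{eqn_op_B_station} contains no time derivatives, so it is algebraic in $t$ and reduces to the stationary problem of Lemma~\ref{lem_op_B_station} evaluated pointwise in time. First I would introduce the solution operator $\Phi\colon \R^{\Nout} \to H^1_0(\bE)\times L^2(\E)\times \R^{\Nin}$ furnished by that lemma: for each $s\in\R^{\Nout}$ let $\Phi s := (\pb,\mb,\lb)$ be the unique solution of~\eqref{eqn_op_B_station} with $\r$ replaced by the constant $s$. Since every operator in~\eqref{eqn_op_B_station} is linear and the problem is uniquely solvable, $\Phi$ is linear, and the stability estimate of Lemma~\ref{lem_op_B_station} shows it is bounded, $\|\Phi s\|\lesssim |s|$ with a constant independent of~$s$.

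Given $\r\in H^m(0,T;\R^{\Nout})$, I would then set $(\pb,\mb,\lb)(t) := \Phi(\r(t))$ for a.e.\ $t\in[0,T]$. As $\Phi$ is bounded and linear and $\r$ is Bochner measurable, this composition is Bochner measurable into the product space, and it solves~\eqref{eqn_op_B_station} at almost every~$t$ by construction. Uniqueness in the stated class is then immediate: the difference of two such solutions solves the homogeneous stationary problem at a.e.\ $t$, hence vanishes by the pointwise uniqueness of Lemma~\ref{lem_op_B_station}.

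The heart of the matter is the regularity claim, and the one point that will require care is that a fixed bounded linear operator commutes with the weak (Bochner--Sobolev) time derivative. I would verify this directly: for any $\phi\in C_c^\infty(0,T)$, using that bounded linear operators may be pulled through Bochner integrals,
\[
  \int_0^T \Phi(\r(t))\,\dot\phi(t)\dt
  = \Phi\Big(\int_0^T \r(t)\,\dot\phi(t)\dt\Big)
  = -\,\Phi\Big(\int_0^T \dot\r(t)\,\phi(t)\dt\Big)
  = -\int_0^T \Phi(\dot\r(t))\,\phi(t)\dt,
\]
so the weak derivative of $t\mapsto \Phi(\r(t))$ equals $t\mapsto \Phi(\dot\r(t))$. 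Iterating this identity $k\le m$ times gives $\tfrac{\dif^k}{\dif t^k}\Phi(\r(\cdot)) = \Phi\big(\tfrac{\dif^k}{\dif t^k}\r(\cdot)\big)$, which lies in $L^2$ in time because $\Phi$ is bounded and $\r\in H^m$.

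Finally, combining these derivative identities with the pointwise stability bound yields both membership in the claimed space and the asserted continuous dependence,
\[
  \|(\pb,\mb,\lb)\|_{H^m(0,T;\,\cdots)}^2
  = \sum_{k=0}^m \int_0^T \Big\|\Phi\big(\tfrac{\dif^k}{\dif t^k}\r(t)\big)\Big\|^2 \dt
  \lesssim \sum_{k=0}^m \int_0^T \Big|\tfrac{\dif^k}{\dif t^k}\r(t)\Big|^2 \dt
  = \|\r\|_{H^m(0,T;\R^{\Nout})}^2 .
\]
I expect the only genuine obstacle to be the commutation step above; everything else is bookkeeping, the main caveat being measurability of the composition $\Phi\circ\r$, which follows from the continuity of $\Phi$.
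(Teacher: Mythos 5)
Your proposal is correct and follows essentially the same route as the paper: the paper's proof also applies Lemma~\ref{lem_op_B_station} pointwise in time and notes that the solution inherits $H^m$-regularity because the operators are time-independent. You have merely made the paper's terse argument rigorous by introducing the bounded linear solution operator and verifying explicitly that it commutes with weak time derivatives, which is a faithful elaboration rather than a different approach.
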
 
\begin{proof}
The existence follows directly by Lemma~\ref{lem_op_B_station}, if we consider system~\eqref{eqn_op_B_station} pointwise in time. The resulting solution is $H^m$-regular in time, since the operators are time-independent.
\end{proof}
We return to system~\eqref{eqn_op_B_eps}. To show the existence of mild and classical solutions we need the following lemma. 
\begin{lemma}\label{lem_unbounded_A}
Consider the (unbounded) operator 
\begin{equation}
\label{eqn_unbounded_A}
	\A := \begin{bmatrix}
	-\beta a & \hphantom{-}\calK^\ast \\	
	-\calK & -d/\beta
	\end{bmatrix}\colon 
	D(\A) \subset \big(L^2(\E) \times L^2(\E)\big) \to L^2(\E) \times L^2(\E)
\end{equation} 
for arbitrary positive $\beta>0$ and the domain
\[
  D(\A)
  = H^1_0(\bE) \times \big\{ m \in L^2(\E) \,|\ \exists\, \mstar \in L^2(\E)\colon (\mstar, q) = \langle \calK^* m, q\rangle\ \text{for all } q\in H^1_0(\bE) \big\}. 
\]  
Then, $\A$ generates a $C_0$-semigroup.
\end{lemma}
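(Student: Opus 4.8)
The plan is to verify the hypotheses of the Lumer--Phillips theorem on the Hilbert space $L^2(\E)\times L^2(\E)$, namely that $\A$ is densely defined, dissipative, and that $\lambda\,\id-\A$ is surjective for some (hence all) $\lambda>0$. Density of $D(\A)$ is immediate, since its first component $H^1_0(\bE)$ is already dense in $L^2(\E)$ and its second component contains, e.g., all edgewise smooth functions. The decisive structural feature is that the off-diagonal block is formally skew-symmetric, so that the unbounded part contributes nothing to the energy balance and only the sign-definite damping coefficients $a\ge 0$ and $d\ge\dm>0$ remain. An equivalent route would be to write $\A=\A_0+B$ with $\A_0$ the purely skew part (which generates a unitary group by Stone's theorem) and $B=\diag(-\beta a,-d/\beta)$ a bounded operator, and to invoke the bounded-perturbation theorem; I favour Lumer--Phillips as it keeps the domain bookkeeping explicit.

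For dissipativity I would take $z=(p,m)\in D(\A)$ and compute
\[
  \langle \A z, z\rangle = -\beta\,(a p, p) + (\calK^\ast m, p) - (\calK p, m) - \tfrac1\beta (d m, m).
\]
The key cancellation is $(\calK^\ast m, p)=(\calK p, m)$: by the defining property of the domain, $\calK^\ast m$ is represented by an $L^2$-function $\mstar$ with $(\mstar, q)=\langle \calK^\ast m, q\rangle=(\px q, m)$ for all $q\in H^1_0(\bE)$, and choosing $q=p$ gives exactly $(\calK^\ast m, p)=(\px p, m)=(\calK p, m)$. Hence $\langle \A z, z\rangle = -\beta\,(a p, p) - \tfrac1\beta (d m, m)\le 0$, using $a\ge 0$ and $d\ge 0$.

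For the range condition I would fix $\lambda>0$ and solve $(\lambda\,\id-\A)z=(f,g)$ for arbitrary $(f,g)\in L^2(\E)\times L^2(\E)$. The second row reads $(\lambda+d/\beta)\,m=g-\px p$ as an identity in $L^2(\E)$; since $\lambda+d/\beta$ is bounded and bounded away from zero (by $\lambda+\dm/\beta$), this lets me eliminate $m=(\lambda+d/\beta)^{-1}(g-\px p)$ once $p\in H^1_0(\bE)$ is known. Substituting into the first row and testing with $q\in H^1_0(\bE)$ yields the variational problem
\[
  \lambda\,(p,q) + \beta\,(a p, q) + \big((\lambda+d/\beta)^{-1}\px p, \px q\big) = (f,q) + \big((\lambda+d/\beta)^{-1} g, \px q\big).
\]
The left-hand bilinear form is bounded on $H^1_0(\bE)$ and coercive, as its contributions $\lambda\|p\|^2$ and $(\lambda+\dM/\beta)^{-1}\|\px p\|^2$ already control $\|p\|_{H^1(\bE)}^2$ while $\beta(a p,p)\ge 0$; Lax--Milgram then gives a unique $p\in H^1_0(\bE)$. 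Recovering $m$ from the formula above and reading the first row backwards shows $\calK^\ast m=\lambda p+\beta a p-f\in L^2(\E)$, so $(p,m)\in D(\A)$ and $\lambda\,\id-\A$ is onto.

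I expect the only genuine subtlety to lie in the domain bookkeeping for the unbounded blocks: correctly interpreting $\calK^\ast m$ as an $L^2$-function on $D(\A)$, justifying the cancellation $(\calK^\ast m, p)=(\calK p, m)$, and verifying a posteriori that the $m$ produced by Lax--Milgram satisfies the defining domain condition. Everything else --- boundedness and coercivity of the bilinear form, density of the domain --- is routine. With dissipativity and surjectivity in hand, Lumer--Phillips yields a $C_0$-semigroup (in fact of contractions), completing the proof.
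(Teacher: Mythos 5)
Your proof is correct, but it takes a genuinely different route from the paper within the Lumer--Phillips family of results. The paper verifies that $\A$ is densely defined, \emph{closed} (via the invertibility of $\A$ as a map $H^1_0(\bE)\times L^2(\E)\to H^{-1}(\bE)\times[L^2(\E)]^\ast$, referencing the proof of Lemma~\ref{lem_op_B_station}), and dissipative \emph{with dissipative adjoint} $\A^\ast$ -- which has the same form with the skew part negated and, as the paper asserts, the same domain -- and then invokes the corollary of Lumer--Phillips in Pazy requiring exactly these four properties. You instead verify the range condition of the Lumer--Phillips theorem directly: eliminating $m$ from the resolvent equation $(\lambda\,\id-\A)z=(f,g)$ via the boundedly invertible multiplication operator $(\lambda+d/\beta)^{-1}$, solving the resulting elliptic variational problem for $p$ by Lax--Milgram, and then checking a posteriori that the recovered $m$ satisfies the defining condition of $D(\A)$ with $\mstar=\lambda p+\beta a p-f$. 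The dissipativity computation, including the key cancellation $(\calK^\ast m,p)=(\calK p,m)$ justified through the domain condition, is the same in both arguments. What each approach buys: yours is more self-contained -- closedness comes for free from dissipativity plus the range condition, and you avoid entirely the adjoint and its domain (a point the paper treats rather briskly, since identifying $D(\A^\ast)$ is in general not automatic); the paper's route avoids solving any resolvent equation, exploiting instead the structural symmetry between $\A$ and $\A^\ast$, at the price of needing closedness as a separate ingredient imported from another lemma. Both yield a contraction semigroup.
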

\begin{proof}
We show that $\A$ is a densely defined, closed, and dissipative operator with a dissipative adjoint~$\A^\ast$. The statement then follows by~\cite[Ch.~1.4, Cor.~4.4]{Paz83}. The operator is densely defined, since $D(\A)$ contains all functions $(p,m)$ with $p|_e, m|_e \in C^\infty_c(e)$ for all $e\in \E$. The closeness follows by the fact, that the operator $\A$ defines an invertible operator
\[
	\A\colon 
	H^1_0(\bE) \times L^2(\E) 
	\to H^{-1}(\bE) \times [L^2(\E)]^\ast
\]
with bounded inverse, cf. the proof of Lemma~\ref{lem_op_B_station}. For the dissipativity we note that 
\[
	\A^\ast 
	= \begin{bmatrix}
	-\beta a & -\calK^\ast \\
	\calK &  -d/\beta
	\end{bmatrix} 
\] 
has the same domain as $\A$. Let  $(p,m)\in D(\A)$ be arbitrary and $(p^\ast,m^\ast) \in [L^2(\E)]^\ast \times [L^2(\E)]^\ast$ the image of $(p,m)$ under the isometric Riesz isomorphism~\cite[Ch.~18.11]{Zei90a}. Then, it holds that
\begin{align*}
	\big\langle (p^\ast,m^\ast), \A(p,m) \big\rangle 
	 &= - \beta (p,a p) + (p,\calK^\ast m ) - (m,\calK p) - \tfrac{1}{\beta} (m,d m)\\
	 &= - \beta (p,a p) - \tfrac{1}{\beta} (m,d m) \leq 0.
\end{align*}
Hence, the operator $\A$ is dissipative. The dissipativity of $\A^\ast$ follows similarly.  
\end{proof}
\begin{lemma}[Existence of a mild solution]
\label{lem_mild_sol_B}
Consider right-hand sides $\f, \g\in L^2(0,T;L^2(\E))$, $\h \in H^1(0,T; \R^{\Nin})$, and $\r\in H^1(0,T; \R^{\Nout})$. Further assume initial data $p(0) \in L^2(\E)$ and $m(0)\in L^2(\E)$. Then, there exists a unique mild solution $(p,m,\lambda)$ of~\eqref{eqn_op_B_eps} with
$$
	p\in C([0,T],L^2(\E)) \cap H^1(0,T;H^{-1}(\bE)) \quad \text{ and } \quad m\in C([0,T],L^2(\E)).
$$
The Lagrange multiplier~$\lambda$ exists in a distributional sense with a regular primitive in the space~$C([0,T],\R^{\Nin})$ and it holds that 
\[ 
	\dot{p} + \Ce^\ast \lambda \in L^2(0,T;[H^1(\bE)]^\ast).
\]
\end{lemma}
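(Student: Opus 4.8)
The plan is to reduce the operator system~\eqref{eqn_op_B_eps} to an abstract Cauchy problem governed by the semigroup generator $\A$ of Lemma~\ref{lem_unbounded_A}, and then to recover the Lagrange multiplier and the additional regularity a posteriori. Three preparatory reductions are needed: absorbing the inhomogeneity $\Ci^*\r$, lifting the boundary data $\h$, and removing the multiplier on the kernel of $\Ce$.

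First I would absorb $\Ci^*\r$ together with the $\r$-part of the constraint by means of the stationary auxiliary problem. Since $\r\in H^1(0,T;\R^{\Nout})$, Corollary~\ref{cor_op_B_station} yields $(\pb,\mb,\lb)\in H^1(0,T;H^1_0(\bE))\times H^1(0,T;L^2(\E))\times H^1(0,T;\R^{\Nin})$ solving~\eqref{eqn_op_B_station} pointwise in time. Writing $p=\pb+\hat p$, $m=\mb+\hat m$, $\lambda=\lb+\hat\lambda$ and subtracting~\eqref{eqn_op_B_station} from~\eqref{eqn_op_B_eps} cancels $-\Ci^*\r$ in~\eqref{eqn_op_B_eps_a} and the coupling term in~\eqref{eqn_op_B_eps_b}, at the price of the new forcings $-\dot\pb$ and $-\eps\dot\mb$, while the constraint becomes $\Ce\hat p=\h$. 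The decisive gain is that the rough datum $\Ci^*\r\in[H^1(\bE)]^*$ is replaced by $\dot\pb\in L^2(0,T;L^2(\E))$, which is admissible as a right-hand side in the $L^2$-based semigroup setting.

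Next I would homogenize the boundary constraint by the lift $\hat p=\Ce^-\h+\pt$ with $\pt\in H^1_0(\bE)=\Kern\Ce$, using the right-inverse $\Ce^-$ furnished by the inf-sup stability of $\Ce$ (Lemma~\ref{lem_infsup}). This generates the further $L^2$-forcings $-\dot{(\Ce^-\h)}-a\Ce^-\h$ and $-\calK\Ce^-\h$, all lying in $L^2(0,T;L^2(\E))$ by $\h\in H^1$ and the boundedness of $\Ce^-$, $a$ and $\calK$ on $H^1(\bE)$, and turns the constraint into $\Ce\pt=0$. Testing the first equation against $q\in H^1_0(\bE)$ now annihilates $\Ce^*\hat\lambda$, since $\langle\Ce^*\hat\lambda,q\rangle=(\hat\lambda,\Ce q)=0$; hence on $\Kern\Ce$ the multiplier disappears and $(\pt,\hat m)$ solves a genuine evolution problem $\dot u=\A u+F$ with $F\in L^2(0,T;L^2(\E)\times L^2(\E))$, where $\A$ is, after rescaling the flow variable and time to absorb the factor $\eps$ (matching $\beta=\sqrt{\eps}$), precisely the generator of Lemma~\ref{lem_unbounded_A}. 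The variation-of-constants formula then yields a unique mild solution $u\in C([0,T],L^2(\E)\times L^2(\E))$ for the data $\pt(0)=p(0)-\pb(0)-\Ce^-\h(0)\in L^2(\E)$ and $\hat m(0)=m(0)-\mb(0)\in L^2(\E)$. Undoing the two substitutions produces $p\in C([0,T],L^2(\E))$ and $m\in C([0,T],L^2(\E))$.

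Finally I would recover the missing regularity and the multiplier. Restricting~\eqref{eqn_op_B_eps_a} to test functions in $H^1_0(\bE)$ eliminates $\Ce^*\lambda$ and identifies $\dot p=\g-\Ci^*\r-ap+\calK^*m$ in $H^{-1}(\bE)$; all terms lie in $L^2(0,T;H^{-1}(\bE))$ (in particular $\calK^*m$ via $m\in C([0,T],L^2(\E))$), so with $L^2(\E)\hookrightarrow H^{-1}(\bE)$ one obtains $p\in H^1(0,T;H^{-1}(\bE))$. For the multiplier I would integrate~\eqref{eqn_op_B_eps_a} in time, giving $\Ce^*\int_0^t\lambda=\int_0^t(\g-\Ci^*\r-ap+\calK^*m)-\big(p(t)-p(0)\big)$ with right-hand side $G(t)\in C([0,T],[H^1(\bE)]^*)$. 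Testing $G(t)$ against $H^1_0(\bE)$ reproduces the already established evolution on $\Kern\Ce$ and yields zero, so $G(t)\in\operatorname{range}\Ce^*=(\Kern\Ce)^{\perp}$; as $\Ce^*$ is bounded below by inf-sup stability, it is left-invertible on its closed range, whence $t\mapsto\int_0^t\lambda$ is the unique primitive in $C([0,T],\R^{\Nin})$ and $\lambda$ exists distributionally. The same identity shows $\dot p+\Ce^*\lambda=\g-\Ci^*\r-ap+\calK^*m\in L^2(0,T;[H^1(\bE)]^*)$, the last claim. I expect the multiplier to be the main obstacle: because $\dot p$ is merely a time-distribution with values in a dual space, $\lambda$ cannot be read off pointwise, and one must pass to the time-integrated equation and exploit the closed-range/left-inverse property of $\Ce^*$ to produce the continuous primitive.
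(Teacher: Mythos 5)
Your proposal is correct and takes essentially the same route as the paper's proof: subtract the stationary auxiliary solution from Corollary~\ref{cor_op_B_station}, lift the boundary data via $\Ce^-$, reduce to an abstract Cauchy problem on $\Kern \Ce$ governed by the operator $\A$ of Lemma~\ref{lem_unbounded_A} with $\beta=\sqrt{\eps}$, and recover $\dot p \in L^2(0,T;H^{-1}(\bE))$ and the multiplier a posteriori. The only difference is that you spell out the multiplier construction (time-integrated equation combined with the closed-range/left-inverse property of $\Ce^*$ from inf-sup stability), whereas the paper delegates exactly this argument to the citation~\cite[Th.~3.3]{EmmM13}.
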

\begin{proof}
Let $\pb \in H^1(0,T;H^1(\bE))$, $\mb \in H^1(0,T;L^2(\E))$, and $\lb \in H^1(0,T;\R^{\Nin})$ be the unique solution of system~\eqref{eqn_op_B_station}, cf.~Corollary~\ref{cor_op_B_station}. The introduction of $\pt := p - \pb - \Ce^- h$, $\mt := m - \mb$, and $\lt := \lambda - \lb$ leads together with~\eqref{eqn_op_B_eps} to the system 
\begin{alignat*}{5}
	\dot \pt &\ +\ & a\pt &\ -\ & \calK^* \mt &\ +\ & \Ce^*\lt &= \g-\dot \pb - a\Ce^- \h - \Ce^- \dot \h  && \qquad \text{in }[H^1(\bE)]^*, \\
	\eps\, \dot \mt &\ +\ & \calK \pt &\ +\ & d \mt &  &  &= \f - \calK \Ce^- \h - \eps\, \dot \mb  && \qquad \text{in }[L^2(\E)]^*, \\
	& & \Ce \pt & & & & &= 0 && \qquad \text{in } \R^{\Nin} 
\end{alignat*}
with initial values $\pt(0) = p(0)-\pb(0) - \Ce^- h(0) \in L^2(\E)$ and $\mt(0) = m(0) - \mb(0) \in L^2(\E)$. Since $\Ce \pt = 0$, we can reduce the evolution equation to
\begin{subequations}
\label{eqn_inproof_ph}
\begin{alignat}{5}
\dot \pt &\ +\ & a\pt &\ -\ & \calK^* \mt &= \g-\dot \pb - a\Ce^- \h - \Ce^- \dot \h  && \qquad \text{in } H^{-1}(\bE), \label{eqn_inproof_ph_a}\\
\eps\, \dot \mt &\ +\ & \calK \pt &\ +\ & d \mt &= \f - \calK \Ce^- \h - \eps\, \dot \mb  && \qquad \text{in } [L^2(\E)]^*. \label{eqn_inproof_ph_b}
\end{alignat}
\end{subequations}
Now consider the state $x := [\tfrac{1}{\sqrt{\eps}} \pt, \mt]^T$. Then, equation~\eqref{eqn_inproof_ph} becomes the abstract Cauchy problem 
\begin{subequations}
\label{eqn_cauchy_on_ker}
\begin{align}
	\dot{x}&= \tfrac{1}{\sqrt{\eps}} \A x + F = \tfrac{1}{\sqrt{\eps}} \A x + \begin{bmatrix}
	\tfrac{1}{\sqrt{\eps}} (\g-\dot \pb - a\Ce^-\h - \Ce^- \dot \h)\\
	\tfrac{1}{\eps} (\f - \calK \Ce^- h) - \dot \mb
	\end{bmatrix}, \label{eqn_cauchy_on_ker_a}\\
	x(0) &= \big[\tfrac{1}{\sqrt{\eps}} \pt(0),\, \mt(0) \big]^T \label{eqn_cauchy_on_ker_b}
\end{align}
\end{subequations}
with the operator $\A$ from Lemma~\ref{lem_unbounded_A} and $\beta = \sqrt{\eps}$. Note that we used the density of $H^1_0(\bE)$ in $L^2(\bE)$. 
Since the right-hand side satisfies $F\in L^2(0,T;L^2(\E)\times L^2(\E))$ and $x(0) \in L^2(\E)\times L^2(\E)$, the Cauchy problem has a unique mild solution $x \in C([0,T],L^2(\E)\times L^2(\E))$. Thus, $p = \pt + \pb + \Ce^-h \in C([0,T], L^2(\E))$ has a derivative in $L^2(0,T;H^{-1}(\E))$ by~\eqref{eqn_inproof_ph_a} and $m=\mt + \mb$ is an element of $C([0,T], L^2(\E))$. Finally, $\lambda$ can be constructed as in the proof of~\cite[Th~3.3]{EmmM13}.
\end{proof}
Considering more regularity for the given data, we now show the existence of a classical solution. For this, we again analyze the corresponding Cauchy problem. 
\begin{lemma}[Existence of a classical solution]
\label{lem_clas_sol_B}
Let the right-hand sides satisfy $\f, \g\in H^1(0,T;L^2(\E))$, $\h \in H^2(0,T; \R^{\Nin})$, and $\r\in H^2(0,T; \R^{\Nout})$. Further assume consistent initial data $p(0) \in H^1(\bE)$ and $m(0)\in L^2(\E)$, i.e., $\Ce p(0)=h(0)$, and the existence of a function $\mstar \in L^2(\E)$ with 
\begin{align}
\label{def_mstar}
  (\mstar, q) 
  = \langle \calK^* m(0),\, q\rangle - \langle \Ci^\ast r(0),\, q\rangle
\end{align}
for all $q\in H^1_0(\bE)$. Then, there exists a unique classical solution $(p,m,\lambda)$ of~\eqref{eqn_op_B_eps} with
\begin{align*}
	p\in C([0,T],H^1(\bE)) \cap C^1([0,T],L^2(\E)), &&m\in C^1([0,T],L^2(\E)), &&\lambda \in C([0,T], \R^{\Nin}).
\end{align*}
\end{lemma}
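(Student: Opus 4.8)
The plan is to mimic the structure of the mild-solution proof (Lemma~\ref{lem_mild_sol_B}), but now exploiting the stronger regularity of the data to upgrade the mild solution of the associated Cauchy problem to a classical one. First I would reuse the substitution $\pt := p - \pb - \Ce^- h$, $\mt := m - \mb$, $\lt := \lambda - \lb$, where $(\pb,\mb,\lb)$ is the solution of the stationary system~\eqref{eqn_op_B_station}. Since now $\r\in H^2(0,T;\R^{\Nout})$, Corollary~\ref{cor_op_B_station} gives $(\pb,\mb,\lb)\in H^2$ in time with values in the respective spaces, so that the shifted problem reduces, as before, to the abstract Cauchy problem~\eqref{eqn_cauchy_on_ker} on the kernel of the constraint, with generator $\tfrac{1}{\sqrt\eps}\A$ from Lemma~\ref{lem_unbounded_A} and $\beta=\sqrt\eps$.

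The key point is then a standard semigroup regularity result (e.g.~\cite[Ch.~4, Cor.~2.10]{Paz83}): the mild solution of $\dot x = \tfrac{1}{\sqrt\eps}\A x + F$ is a classical solution, i.e.\ $x\in C^1([0,T],L^2(\E)\times L^2(\E))\cap C([0,T],D(\A))$, provided $F\in C^1([0,T],L^2(\E)\times L^2(\E))$ (or $F\in W^{1,1}$) and $x(0)\in D(\A)$. I would therefore verify these two hypotheses. For the right-hand side, inspecting the explicit form of $F$ in~\eqref{eqn_cauchy_on_ker_a}, the new assumptions $\f,\g\in H^1(0,T;L^2(\E))$, $\h\in H^2(0,T;\R^{\Nin})$ together with $\dot\pb\in H^1(0,T;H^1(\bE))$ and $\dot\mb\in H^1(0,T;L^2(\E))$ guarantee $F\in H^1(0,T;L^2(\E)\times L^2(\E))$, which embeds into the required $W^{1,1}$-regularity; here one uses that $\Ce^-$ and $\calK\Ce^-$ are bounded operators with values in $L^2(\E)$.

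The main obstacle, and the reason for condition~\eqref{def_mstar}, is verifying the compatibility $x(0)\in D(\A)$. Recalling $D(\A)=H^1_0(\bE)\times\{m\in L^2(\E)\mid \exists\,\mstar\in L^2(\E)\colon (\mstar,q)=\langle\calK^*m,q\rangle\ \forall q\in H^1_0(\bE)\}$, the first component $\tfrac{1}{\sqrt\eps}\pt(0)=\tfrac{1}{\sqrt\eps}(p(0)-\pb(0)-\Ce^-h(0))$ lies in $H^1_0(\bE)$ precisely because $p(0)\in H^1(\bE)$ with $\Ce p(0)=h(0)$ (so the boundary values cancel) and $\pb(0)\in H^1_0(\bE)$. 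For the second component $\mt(0)=m(0)-\mb(0)$ I must produce an $L^2$-representative of $\calK^*\mt(0)$ on $H^1_0(\bE)$. Here I would combine the hypothesis~\eqref{def_mstar} on $m(0)$ with the stationary equation~\eqref{eqn_op_B_station_a} tested against $q\in H^1_0(\bE)$: since $\Ce^*\lb$ vanishes on $H^1_0(\bE)$, equation~\eqref{eqn_op_B_station_a} gives $\langle\calK^*\mb(0),q\rangle = (a\pb(0),q)+\langle\Ci^*r(0),q\rangle$ with an $L^2$-right-hand side, so $\calK^*\mb(0)$ is also $L^2$-representable on $H^1_0(\bE)$. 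Subtracting shows that $\calK^*\mt(0)$ has the required $L^2$-representative, whence $\mt(0)$ lies in the second factor of $D(\A)$. This is the step where the precise form of the consistency condition~\eqref{def_mstar} is used in an essential way.

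Once $x\in C^1([0,T],L^2(\E)\times L^2(\E))\cap C([0,T],D(\A))$ is established, undoing the shifts yields $\pt\in C([0,T],H^1_0(\bE))\cap C^1([0,T],L^2(\E))$ and $\mt\in C^1([0,T],L^2(\E))$; adding back $\pb+\Ce^-h$ (which is $C^1$ in time with values in $H^1(\bE)$) and $\mb$ gives $p\in C([0,T],H^1(\bE))\cap C^1([0,T],L^2(\E))$ and $m\in C^1([0,T],L^2(\E))$. Finally, for fixed $t$ the first operator equation~\eqref{eqn_op_B_eps_a}, now holding in $[H^1(\bE)]^*$ with all other terms already identified, determines $\Ce^*\lambda(t)$, and the inf-sup stability of $\Ce$ (Lemma~\ref{lem_infsup}) recovers a unique $\lambda(t)\in\R^{\Nin}$ depending continuously on $t$, so $\lambda\in C([0,T],\R^{\Nin})$. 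Uniqueness follows from the uniqueness of the mild solution already proven in Lemma~\ref{lem_mild_sol_B}.
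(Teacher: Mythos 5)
Your proposal is correct and follows essentially the same route as the paper's proof: the same shift $\pt = p - \pb - \Ce^-\h$, $\mt = m - \mb$ onto the Cauchy problem~\eqref{eqn_cauchy_on_ker}, the same regularity upgrade via~\cite[Ch.~4, Cor.~2.10]{Paz83} using $F \in H^1(0,T;L^2(\E)\times L^2(\E))$ and $x(0)\in D(\A)$ (with the compatibility of $\mt(0)$ obtained exactly as in the paper, by playing~\eqref{def_mstar} against the stationary equation~\eqref{eqn_op_B_station_a} on $H^1_0(\bE)$), and the same recovery of $\lambda$ from the inf-sup stability of $\Ce$. One cosmetic slip: your intermediate assertion that $\calK^\ast\mb(0)$ is itself $L^2$-representable is false in general, since $\langle \Ci^\ast \r(0), q\rangle$ acts by point evaluations at interior vertices and these are not $L^2$-bounded; this is harmless, however, because in your subtraction the $\Ci^\ast \r(0)$ contributions from~\eqref{def_mstar} and from~\eqref{eqn_op_B_station_a} cancel, leaving $\calK^\ast\mt(0) = \mstar - a\pb(0) \in L^2(\E)$ on $H^1_0(\bE)$, which is all that membership in $D(\A)$ requires.
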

\begin{proof}
Following the proof of Lemma~\ref{lem_mild_sol_B} under the given assumptions, we notice that the right-hand side of the Cauchy problem~\eqref{eqn_cauchy_on_ker} is an element of~$H^1(0,T;L^2(\E)\times L^2(\E))$. Further we have $x(0) \in D(\A)$, since $\pt(0) \in H^1(\bE)$ and $\mt(0)$ satisfies, due to \eqref{eqn_op_B_station_a}, 
$$
	\calK^\ast \mt(0)
	= \calK^\ast m(0) - \calK^\ast \mb(0)
	= \calK^\ast m(0) - \Ci^\ast r(0) 
	= \mstar
	\quad\text{in } H^{-1}(\bE).
$$
The claimed solution spaces of~$p$ and~$m$ follow by~\cite[Ch.~4, Cor.~2.10]{Paz83}. Since the operator $\Ce$ satisfies an inf-sup condition by Lemma~\ref{lem_infsup}, there exists a unique (and continuous) multiplier~$\lambda$ which satisfies~\eqref{eqn_op_B_eps_a}, cf.~\cite[Lem.~III.4.2]{Bra07}. 
\end{proof}
\begin{remark}
A sufficient condition for the existence of~$\mstar \in L^2(\E)$ as stated in Lemma~\ref{lem_clas_sol_B} is that the initial value~$m(0) \in H^1(\E)$ satisfies~$\Di m(0) = \r(0)$. In this case, the function~$\mstar$ equals the piece-wise partial derivative of $m(0)$.
\end{remark}
The proofs of Section~\ref{sect:expansion} use certain estimates for the mild solution introduced in Lemma~\ref{lem_mild_sol_B}. For this, we first consider the classical solution from Lemma~\ref{lem_clas_sol_B} and $(\pt, \mt)$ as defined in~\eqref{eqn_inproof_ph}. Using $\pt$ and $\mt$ as test functions in~\eqref{eqn_inproof_ph} and integrating over the time interval $[0,t]$, we obtain 
%
%
\begin{align}
\Vert \pt(t)\Vert^2& + \eps\, \Vert \mt(t) \Vert^2 + \dm \int_0^t\Vert \mt (s)\Vert^2 \ds \notag\\
\le\ &  \Vert \pt(0) \Vert^2 + \eps\, \Vert \mt(0)\Vert^2 + \int_0^t \Vert (\g - \dot{\pb} - a\Ce^- \h - \Ce^-\dot\h)(s)\Vert^2 \label{eqn_estimate_clas} \\ 
&\hspace{5.2cm}+ \Vert (\f - \calK \Ce^- h)(s) \Vert^2 + \frac{\eps^2}{\dm} \Vert \dot{\mb}(s) \Vert^2  \ds \notag \\
\lesssim\ & \Vert \pt(0) \Vert^2 + \eps\, \Vert \mt(0)\Vert^2 + \int_0^t \Vert \f(s) \Vert^2 + \Vert \g(s) \Vert^2 +  |\h(s)|^2 + |\dot \h(s)|^2 + |\dot \r(s)|^2\ds.
\notag
\end{align}
Note that we have used Corollary~\ref{cor_op_B_station} in the second step. By the continuity of the semigroup generated by $\A$ and the density of $D(\A)$ in $L^2(\E) \times L^2(\E)$ as well as of $H^{\ell+1}(0,T;\X)$ in $H^{\ell}(0,T;\X)$ for $\X\in \{ L^2(\bE), \R^\Nin, \R^\Nout\}$ and $\ell\ge 0$, estimate~\eqref{eqn_estimate_clas} is also satisfied for the mild solution $(\pt,\mt)$ of~\eqref{eqn_inproof_ph} under the conditions of Lemma~\ref{lem_mild_sol_B}. 
%
%
\subsection{Parabolic limit case}
For the numerical simulation of the hyperbolic network equations, we will exploit the parabolic structure of the limit equation for $\eps=0$. The corresponding solution is denoted by $(p_0, m_0, \lambda_0)$ and solves the system 
\begin{subequations}
\label{eqn_op_B_noEps}
\begin{alignat}{5}
	\dot p_0&\ +\ & a p_0&\ -\ &\calK^* m_0&\ +\ & \Ce^*\lambda_0 &= \g-\Ci^*\r  &&\qquad \text{in }[H^1(\bE)]^*, \label{eqn_op_B_noEps_a}\\
	& & \calK p_0&\ +\ & d m_0& & &= \f  &&\qquad \text{in }[L^2(\E)]^*, \label{eqn_op_B_noEps_b}\\
	& & \Ce p_0& & & &  &= \h &&\qquad \text{in } \R^{\Nin}. \label{eqn_op_B_noEps_c}
\end{alignat}
\end{subequations}
The initial condition is given by $p_0(0) = p(0)$. Again, we need to discuss the existence of solutions. 
\begin{lemma}[Existence of a weak solution $(p_0,m_0)$]
\label{lem_p0m0}
Consider system~\eqref{eqn_op_B_noEps} with right-hand sides $\f \in L^2(0,T;L^2(\E))$, $\g\in L^2(0,T;[H^1(\bE)]^\ast)$, $\h \in H^1(0,T;\R^{\Nin})$,  and $\r \in L^2(0,T;\R^{\Nout})$. Assume further that $p_0(0)\in L^2(\E)$. Then, there exists a unique weak solution with
\[
	p_0 \in L^2(0,T;H^1(\bE)) \cap C([0,T], L^2(\E)), \qquad 
	m_0 \in L^2(0,T;L^2(\E)),
\]
satisfying the initial condition in $L^2(\E)$. The Lagrange multiplier~$\lambda_0$ exists in a distributional sense with 
\[
	\dot{p}_0 + \Ce^*\lambda_0 \in L^2(0,T;[H^1(\bE)]^\ast).
\]
\end{lemma}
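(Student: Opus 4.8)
The plan is to eliminate the flux variable and reduce~\eqref{eqn_op_B_noEps} to a constrained linear parabolic equation for $p_0$ alone. Since equation~\eqref{eqn_op_B_noEps_b} is posed in $L^2(\E)$ and $d$ is bounded away from zero, I can solve it explicitly for $m_0 = d^{-1}(\f - \calK p_0)$. Inserting this into~\eqref{eqn_op_B_noEps_a} and using $\calK^* d^{-1}\calK = \wlap$ from Lemma~\ref{lem_wlap} yields
\begin{equation*}
  \dot p_0 + (\wlap + a)\, p_0 + \Ce^*\lambda_0 = \g - \Ci^*\r + \calK^* d^{-1}\f =: G \qquad \text{in } [H^1(\bE)]^*,
\end{equation*}
together with the constraint $\Ce p_0 = \h$. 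Under the stated assumptions the right-hand side $G$ lies in $L^2(0,T;[H^1(\bE)]^*)$, since $\calK^*\colon L^2(\E)\to[H^1(\bE)]^*$ is bounded and $\Ci^*\r \in L^2(0,T;[H^1(\bE)]^*)$.

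Next I would homogenize the boundary constraint. Using the right-inverse $\Ce^-$ guaranteed by Lemma~\ref{lem_infsup}, I set $\hat p_0 := p_0 - \Ce^-\h$, so that $\Ce \hat p_0 = 0$, i.e., $\hat p_0(t) \in H^1_0(\bE)$. Because $\h \in H^1(0,T;\R^{\Nin})$, the lifting $\Ce^-\h$ lies in $H^1(0,T;H^1(\bE))$ and its contributions $\Ce^-\dot\h$ and $(\wlap+a)\Ce^-\h$ stay in $L^2(0,T;[H^1(\bE)]^*)$. Testing the reduced equation against $q \in H^1_0(\bE)$ annihilates the multiplier term, since $\langle \Ce^*\lambda_0, q\rangle = (\Ce q, \lambda_0) = 0$. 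Hence $\hat p_0$ solves a genuine (unconstrained) linear parabolic problem on the Gelfand triple $H^1_0(\bE) \hookrightarrow L^2(\E) \hookrightarrow H^{-1}(\bE)$ with initial value $\hat p_0(0) = p_0(0) - \Ce^-\h(0) \in L^2(\E)$.

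The coercivity needed for this parabolic problem is exactly the ellipticity of $\wlap + a$ on $H^1_0(\bE)$, which follows from Lemma~\ref{lem_wlap} together with $a\ge 0$. By the standard existence and uniqueness theory for abstract linear parabolic equations, cf.~\cite{Rou05}, there is a unique $\hat p_0 \in W(0,T;H^1_0(\bE),H^{-1}(\bE))$, and the embedding $W(0,T;H^1_0(\bE),H^{-1}(\bE))\hookrightarrow C([0,T],L^2(\E))$ delivers the stated regularity and the initial condition of $p_0 = \hat p_0 + \Ce^-\h$. The flux is then recovered as $m_0 = d^{-1}(\f - \calK p_0) \in L^2(0,T;L^2(\E))$.

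Finally I would construct the Lagrange multiplier. Rearranging~\eqref{eqn_op_B_noEps_a} shows that the combination $\dot p_0 + \Ce^*\lambda_0 = G - (\wlap + a)p_0$ lies in $L^2(0,T;[H^1(\bE)]^*)$, although $\dot p_0$ alone is only controlled as a functional on $H^1_0(\bE)$. The multiplier $\lambda_0$ is therefore obtained only distributionally: exactly as in the proof of Lemma~\ref{lem_mild_sol_B}, one passes to a time-integrated (primitive) form and uses the inf-sup stability of $\Ce$ from Lemma~\ref{lem_infsup} to extract $\lambda_0$, cf.~\cite[Th~3.3]{EmmM13}. I expect this last step -- making rigorous sense of $\lambda_0$ and of the splitting of $\dot p_0$ between the dual of the kernel $H^1_0(\bE)$ and that of the full space $H^1(\bE)$ -- to be the main technical obstacle, whereas the parabolic solve and the elimination of $m_0$ are routine.
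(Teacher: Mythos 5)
Your proposal is correct and takes essentially the same route as the paper: the paper likewise inserts the algebraic equation~\eqref{eqn_op_B_noEps_b} into~\eqref{eqn_op_B_noEps_a} to obtain the reduced constrained parabolic system~\eqref{eqn_op_B_noEps_red} with operator $\wlap + a$, invokes \cite[Th.~3.3]{EmmM13} (using the density of $H^1_0(\bE)$ in $L^2(\E)$) for the pair $(p_0,\lambda_0)$, and recovers $m_0 = d^{-1}(\f - \calK p_0)$ at the end. The only difference is that you unfold the content of that citation by hand -- homogenization of the constraint via $\Ce^-\h$, standard parabolic theory on the Gelfand triple $H^1_0(\bE) \hookrightarrow L^2(\E) \hookrightarrow H^{-1}(\bE)$, and the distributional multiplier via the inf-sup stability of $\Ce$ -- steps which the paper delegates to the cited theorem.
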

\begin{proof}
Since the algebraic equation~\eqref{eqn_op_B_noEps_b} is stated in $L^2(\E)$, we can insert this equation into~\eqref{eqn_op_B_noEps_a}, which results in 
\begin{subequations}
\label{eqn_op_B_noEps_red}
\begin{alignat}{5}
	\dot p_0 + (\wlap + a) p_0& + \Ce^*\lambda_0 &\ =\ & \g-\Ci^*\r +  \calK^\ast d^{-1} \f &&\qquad \text{in }[H^1(\bE)]^*,\label{eqn_op_B_noEps_red_1}\\
	\Ce p_0&  &\ =\ & \h &&\qquad \text{in } \R^{\Nin} \label{eqn_op_B_noEps_red_3}
\end{alignat}
\end{subequations}
with the operator $\wlap$ introduced in Lemma~\ref{lem_wlap}. The existence of a unique partial solution $(p_0,\lambda_0)$ follows then by~\cite[Th.~3.3]{EmmM13}, using that $H^1_0(\bE)$ is dense in $L^2(\E)$. Finally, with equation~\eqref{eqn_op_B_noEps_b} the mass flow is given by
\begin{equation}\label{eqn_mass_flow_noEps}
	m_0 = d^{-1}(\f - \calK p_0)
\end{equation}
and therefore unique and an element of $L^2(0,T;L^2(\E)).$
\end{proof}
For the analysis in Section~\ref{sect:expansion} we also need solutions with higher regularity including an appropriate multiplier~$\lambda_0$. For this, we consider more regular right-hand sides and initial values. 
\begin{lemma}[Weak solution with higher regularity]
\label{lem_weak_op_B_noeps_reg} 
Consider right-hand sides $\f \in H^1(0,T;L^2(\E))$, $\g \in W(0,T;[H^1(\bE)]^\ast,H^{-1}(\bE))$, $\h \in H^2(0,T;\R^{\Nin})$, and~$\r \in H^1(0,T;\R^{\Nout})$. Further assume consistent initial data $p_0(0) \in H^1(\bE)$ with $\Ce p_0(0) = \h(0)$ and the existence of a function $\pstar\in L^2(\E)$ such that 
\begin{align}\label{def_pstar}
  (\pstar, q) 
  = \big\langle \g(0) -\Ci^\ast \r(0) - (\wlap +a) p_0(0) - \Ce^- \dot h (0) + \calK^\ast d^{-1}\f(0), q \big\rangle
\end{align}
for all $q\in H^1_0(\bE)$.
Then the solution of system~\eqref{eqn_op_B_noEps} satisfies 
\begin{gather*}
	p_0 \in  H^1(0,T; H^1(\bE)) \cap C^1([0,T],L^2(\E)), \quad
	m_0 \in  H^1(0,T;L^2(\E)), \quad
	\lambda_0 \in C([0,T],\R^{\Nin}).
\end{gather*}
\end{lemma}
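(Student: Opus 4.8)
The plan is to reduce the saddle-point system to a single constrained parabolic evolution, exactly as in the proof of Lemma~\ref{lem_p0m0}, and then to gain the extra time regularity by differentiating this evolution once in time and applying the very same existence result to the time derivative. Inserting the algebraic relation $m_0 = d^{-1}(\f - \calK p_0)$ from \eqref{eqn_op_B_noEps_b} into \eqref{eqn_op_B_noEps_a} yields the reduced system \eqref{eqn_op_B_noEps_red} with the weak Laplacian $\wlap$ of Lemma~\ref{lem_wlap}. Homogenizing the constraint via $\pt_0 := p_0 - \Ce^-\h$ turns \eqref{eqn_op_B_noEps_red} into an unconstrained equation on the kernel $H^1_0(\bE)$,
\begin{equation*}
  \dot{\pt}_0 + (\wlap + a)\pt_0 = \g - \Ci^*\r + \calK^* d^{-1}\f - \Ce^-\dot\h - (\wlap + a)\Ce^-\h \quad \text{in } H^{-1}(\bE),
\end{equation*}
with $\pt_0(0) = p_0(0) - \Ce^-\h(0) \in H^1_0(\bE)$, membership in the kernel using the compatibility $\Ce p_0(0)=\h(0)$. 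Lemma~\ref{lem_p0m0} already delivers the base solution $p_0 \in L^2(0,T;H^1(\bE)) \cap C([0,T],L^2(\E))$; the task is to upgrade the time derivative $\dot p_0$ from $H^{-1}(\bE)$ to $H^1(\bE)$.

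To this end I would differentiate the displayed evolution in time and read it as an equation for $v := \dot{\pt}_0$,
\begin{equation*}
  \dot v + (\wlap + a)v = \dot\g - \Ci^*\dot\r + \calK^* d^{-1}\dot\f - \Ce^-\ddot\h - (\wlap + a)\Ce^-\dot\h \quad \text{in } H^{-1}(\bE), \qquad v(0) = \pstar.
\end{equation*}
The role of the regularity hypotheses is precisely to make this again an equation of the type treated in Lemma~\ref{lem_p0m0}: the right-hand side lies in $L^2(0,T;H^{-1}(\bE))$ because $\g \in W(0,T;[H^1(\bE)]^*, H^{-1}(\bE))$ provides $\dot\g \in L^2(0,T;H^{-1}(\bE))$, while $\dot\r, \dot\f, \ddot\h, \dot\h \in L^2$ control the remaining terms; and the initial datum is $L^2$-regular because \eqref{def_pstar} is exactly the identity $\pstar = \dot{\pt}_0(0)$ obtained by evaluating the evolution at $t=0$ against $q\in H^1_0(\bE)$ (the terms $(\wlap+a)\Ce^-\h(0)$ cancel). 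Applying Lemma~\ref{lem_p0m0}, i.e.~\cite[Th.~3.3]{EmmM13}, then gives a unique $v \in L^2(0,T;H^1(\bE)) \cap C([0,T],L^2(\E))$. The primitive $t \mapsto \pt_0(0) + \int_0^t v(s)\ds$ solves the original (undifferentiated) evolution with the same data and hence coincides with $\pt_0$ by uniqueness, so $v = \dot{\pt}_0$. Adding back $\Ce^-\h \in H^2(0,T;H^1(\bE))$ (this is where $\h\in H^2$ enters) yields $p_0 \in H^1(0,T;H^1(\bE)) \cap C^1([0,T],L^2(\E))$.

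It remains to recover $m_0$ and $\lambda_0$. The mass flux follows directly from \eqref{eqn_mass_flow_noEps}: since $d^{-1}$ is bounded, $\f \in H^1(0,T;L^2(\E))$, and $\calK p_0 \in H^1(0,T;L^2(\E))$ (as $\calK$ maps $H^1(\bE)$ boundedly into $L^2(\E)$), we obtain $m_0 \in H^1(0,T;L^2(\E))$. For the multiplier I would solve \eqref{eqn_op_B_noEps_a} pointwise in time: testing against $\Ce^-\mu$ gives
\begin{equation*}
  (\lambda_0(t),\mu) = \big\langle \g(t) - \Ci^*\r(t) - \dot p_0(t) - a\, p_0(t) + \calK^* m_0(t),\ \Ce^-\mu \big\rangle,
\end{equation*}
and the inf-sup stability of $\Ce$ from Lemma~\ref{lem_infsup} determines $\lambda_0(t)$ and bounds it by the residual in $[H^1(\bE)]^*$. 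Time-continuity of $\dot p_0$, $p_0$, $m_0$ (all in $C([0,T],L^2(\E))$) together with that of $\g$ in the dual then gives $\lambda_0 \in C([0,T],\R^{\Nin})$.

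The main obstacle I expect is the rigorous identification $v = \dot{\pt}_0$ with the correct $L^2$-regular initial value: the whole upgrade hinges on \eqref{def_pstar} being exactly the value $\dot{\pt}_0(0)$, so that the differentiated evolution is well-posed with datum in $L^2(\E)$. Verifying this identity against all test functions in $H^1_0(\bE)$, and securing the time-continuity of $\g$ into $[H^1(\bE)]^*$ that the multiplier argument needs (rather than merely $\dot\g \in L^2(0,T;H^{-1}(\bE))$), are the delicate points; the remaining steps are routine given Lemma~\ref{lem_p0m0} and Lemma~\ref{lem_infsup}.
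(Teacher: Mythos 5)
Your proof is correct, and its skeleton matches the paper's: the same reduction of \eqref{eqn_op_B_noEps} via inserting \eqref{eqn_op_B_noEps_b} into \eqref{eqn_op_B_noEps_a}, the same homogenization $\pot = p_0 - \Ce^-\h$, the same recovery of $m_0$ from \eqref{eqn_mass_flow_noEps}, and the same recovery of $\lambda_0$ through the left-inverse/inf-sup property of $\Ce^\ast$. The difference lies in the central regularity step. The paper checks that the right-hand side of \eqref{eqn_op_B_noEps_pot} lies in $H^1(0,T;H^{-1}(\bE))$ and that $\dot\pot(0)=\pstar\in L^2(\E)$, and then cites the parabolic regularity theorem \cite[Th.~IV.27.2]{Wlo92} to conclude $\pot\in H^1(0,T;H^1_0(\bE))\cap C^1([0,T],L^2(\E))$. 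You instead reprove that regularity statement by hand: you solve the time-differentiated equation with initial datum $\pstar$ using the base existence result underlying Lemma~\ref{lem_p0m0}, and identify the resulting solution $v$ with $\dot\pot$ by showing that its primitive solves the undifferentiated problem with the same data and invoking uniqueness. This bootstrap is exactly the standard proof of the cited theorem, and your verification that \eqref{def_pstar} is precisely the identity $v(0)=\dot\pot(0)$ --- including the cancellation that assembles $(\wlap+a)\pot(0)+(\wlap+a)\Ce^-\h(0)$ into $(\wlap+a)p_0(0)$ --- is the computation the paper performs implicitly. Your version is more self-contained and makes transparent where each hypothesis enters ($\g\in W(0,T;[H^1(\bE)]^\ast,H^{-1}(\bE))$ supplies the differentiated right-hand side in $L^2(0,T;H^{-1}(\bE))$; $\h\in H^2$ enters only through $\Ce^-\h$), while the paper's is shorter. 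Two minor remarks: your displayed evolution correctly carries $(\wlap+a)\pot$ on the left-hand side, whereas the paper's \eqref{eqn_op_B_noEps_pot} drops the $a\,\pot$ term (evidently a typo, since its right-hand side contains $(\wlap+a)\Ce^-\h$); and the delicate point you flag for the continuity of $\lambda_0$ --- that one needs $t\mapsto\langle\g(t),q\rangle$ to be continuous for $q$ in a complement of $H^1_0(\bE)$, which does not follow from $\dot\g\in L^2(0,T;H^{-1}(\bE))$ alone --- is equally present in the paper's proof, so it is not an additional gap of your argument.
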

\begin{proof}
We introduce $\pt_0 := p_0 -\Ce^-\h$ with $\Ce^-\h \in H^2(0,T;H^1(\bE))$ by assumption. Due to~\eqref{eqn_op_B_noEps_red}, this functions satisfies $\Ce\pt_0 = 0$ and  
\begin{equation}
\label{eqn_op_B_noEps_pot}
  \dot \pt_0 + \wlap \pt_0 
  = \g-\Ci^*\r - (\wlap + a) \Ce^-\h - \Ce^-\dot \h +  \calK^\ast d^{-1} \f \qquad \text{in }H^{-1}(\bE)
\end{equation}
with initial value $\pt_0(0)=p_0(0)-\Ce^- \h(0)$. The given assumptions imply that the right-hand side is an element of $H^1(0,T;H^{-1}(\bE))$ and 
$
  \dot \pt_0(0) 
  = \pstar 
  \in L^2(\E)
$. 
By \cite[Th.~IV.27.2]{Wlo92} we conclude that $\pot \in H^1(0,T;H^1_0(\bE))\cap C^1([0,T],L^2(\E))$. Finally, the stated regularity of $m_0$ follows by equation~\eqref{eqn_mass_flow_noEps} and, since there is a bounded left-inverse of~$\Ce^\ast$, the existence of~$\lambda_0$ by~\cite[Lem.~III.4.2]{Bra07}.
\end{proof}
After we have discussed the existence of solutions to systems~\eqref{eqn_op_B_eps} and~\eqref{eqn_op_B_noEps}, we compare these two solutions in the following section.   			
\section{$\eps$-Expansion of the Solution}\label{sect:expansion}
Assuming $\eps \ll 1$, we consider an expansion of the variables $p$ and $m$ in $\eps$, i.e., 
\begin{align}
\label{eqn_epsExp}
  p(t,x) = p_0(t,x) + \eps p_1(t,x) + \dots, \qquad
  m(t,x) = m_0(t,x) + \eps m_1(t,x) + \dots.
\end{align}
In the same manner, the Lagrange multiplier $\lambda$ is decomposed into $\lambda = \lambda_0 + \eps\lambda_1 + \dots$. As in the previous sections, the variables $p_j$, $m_j$, and $\lambda_j$ are defined edgewise by $p^e_j$, $m^e_j$, and $\lambda^e_j$ for all $e\in\E$. We are now interested in the approximation properties of $p_0$ and $m_0$ as well as of 
\[
  \hat p(t,x) := p_0(t,x) + \eps p_1(t,x), \qquad
  \hat m(t,x) := m_0(t,x) + \eps m_1(t,x).
\]
%
%
\subsection{First-order approximation}\label{sect:expansion:first}
We first discuss the approximation property of the pair $(p_0,m_0)$, which solves together with the multiplier $\lambda_0$ system~\eqref{eqn_op_B_noEps}. It was already shown in~\cite[Th.~1]{EggK17ppt} that this approximation is of order $\sqrt\eps$ and -- under certain assumptions on the initial data -- of order $\eps$. The proof, however, is based on the stationary solution of the system, which is not applicable in the present case of time-dependent right-hand sides and non-trivial boundary conditions. We present here an alternative proof, which also applies to the weak solution. 

We consider the difference of systems~\eqref{eqn_op_B_eps} and~\eqref{eqn_op_B_noEps}. Since $p$ and $p_0$ satisfy the same boundary conditions, we can neglect the constraint and restrict the test functions in the first equation to $H^1_0(\bE)$. This then leads to 
\begin{subequations}
\label{eqn_inproof_pp0}
	\begin{alignat}{5}
	 	\ddt (p-p_0)&\,+\,& a (p-p_0)&\,-\,& \calK^* (m-m_0) &\,=  0  &&\qquad \text{in } H^{-1}(\bE), \label{eqn_inproof_pp0_a}\\
	 	&  & \calK (p-p_0)&\,+\,& d (m-m_0) &\,=   -\eps\, \dot m &&\qquad \text{in }[L^2(\E)]^*. \label{eqn_inproof_pp0_b}
	\end{alignat}
\end{subequations}
The initial condition satisfies $(p-p_0)(0) = 0$. To derive estimates, we follow two approaches. First, we consider~\eqref{eqn_inproof_pp0} as a parabolic system with a right-hand side $\dot m$, which leads to Theorem~\ref{thm_p0m0}. Second, we will subtract $\eps \dot m_0$ from the second equation and consider the result as a hyperbolic system. This will be subject of Theorem~\ref{thm_p0m0_2} below. 
\begin{theorem}[First-order approximation I]
\label{thm_p0m0}
Assume right-hand sides $\f, \g \in H^1(0,T;L^2(\E))$, $\h \in H^2(0,T; \R^\Nin)$, and $\r \in H^2(0,T; \R^\Nout)$. Further, assume consistent initial data $p(0)\in H^1(\bE)$ and $m(0) \in L^2(\E)$ such that there is a function $\mstar\in L^2(\E)$ satisfying~\eqref{def_mstar}. Then, the difference of $(p,m)$ and $(p_0, m_0)$ is bounded for $t\le T$ by
\begin{align*}
  \Vert (p-p_0)(t) \Vert^2 &+ \cLap \int_0^t \Vert (p-p_0)(s) \Vert^2_{H^1(\bE)} \ds  + \dm \int_0^t \Vert (m-m_0)(s) \Vert^2 \ds \\
  &\quad\qquad \le \eps\, \frac{1}{\dm^2}\Big(1+\frac{1}{\cLap \dm}\Big)\, \Vert \calK p(0) - f(0) + d m(0) \Vert^2 + \eps^2\,  C_\text{data}
\end{align*}
with a constant 
\[
  C_\text{data} 
  = C_\text{data} \big( \Vert \mstar \Vert^2_{L^2(\E)}, \Vert \f\Vert^2_{H^1(0,T;L^2(\E))}, \Vert \g\Vert^2_{H^1(0,T;L^2(\E))}, \Vert \h\Vert^2_{H^2(0,T;\R^\Nin)}, \Vert \r\Vert^2_{H^2(0,T;\R^\Nout)} \big).
\]
\end{theorem}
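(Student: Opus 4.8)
The plan is to treat the difference system~\eqref{eqn_inproof_pp0} as a parabolic problem with forcing $-\eps\,\dot m$ on the right-hand side of the algebraic equation, and to run the standard energy estimate. First I would eliminate $m-m_0$ by solving~\eqref{eqn_inproof_pp0_b} in $L^2(\E)$, which is legitimate since that equation holds in $[L^2(\E)]^*$. This gives
\[
  m-m_0 = -d^{-1}\calK(p-p_0) - \eps\, d^{-1}\dot m,
\]
and inserting into~\eqref{eqn_inproof_pp0_a} produces a reduced parabolic equation for $e_p := p-p_0$ of the form $\dot e_p + (\wlap + a)\,e_p = \eps\,\calK^* d^{-1}\dot m$ in $H^{-1}(\bE)$, where $\wlap = \calK^* d^{-1}\calK$ is the elliptic operator from Lemma~\ref{lem_wlap}. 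Testing with $e_p\in H^1_0(\bE)$ (admissible because the boundary conditions cancel, as noted before~\eqref{eqn_inproof_pp0}), integrating in time, and using the ellipticity estimate $\langle \wlap e_p, e_p\rangle \ge \cLap\|e_p\|^2_{H^1(\bE)}$ together with $e_p(0)=0$ yields the first two terms on the left-hand side of the claimed bound. Re-expressing the flow error via the algebraic relation then recovers the $\dm\int_0^t\|m-m_0\|^2$ term, where the lower bound $d \ge \dm$ is used.

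The right-hand side after testing is $\eps\int_0^t \langle \calK^* d^{-1}\dot m,\, e_p\rangle\,ds$, which I would bound using the duality pairing together with a weighted Young inequality, splitting it into a part absorbed by the $\cLap\,\|e_p\|^2_{H^1(\bE)}$ coercivity term and a remainder scaling like $\eps^2\,\dm^{-1}\int_0^t\|\dot m\|^2\,ds$. This is the step that forces both the $\cLap^{-1}$-dependence and the leading $\eps^2$ scaling of $C_\text{data}$. The quantity $\int_0^t\|\dot m\|^2$ is then controlled by the a priori estimate~\eqref{eqn_estimate_clas}: differentiating the $\mt$-equation or, more cleanly, applying~\eqref{eqn_estimate_clas} to the time-differentiated mild solution $(\dot\pt,\dot\mt)$ under the regularity hypotheses $\f,\g\in H^1(0,T;L^2(\E))$, $\h\in H^2$, $\r\in H^2$ bounds $\|\dot m\|_{L^2(0,T;L^2(\E))}$ in terms of the data norms collected in $C_\text{data}$. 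Here I would invoke Lemma~\ref{lem_clas_sol_B} (whose hypotheses match those assumed here, including the existence of $\mstar$ satisfying~\eqref{def_mstar}) to guarantee that the classical solution $m\in C^1([0,T],L^2(\E))$ exists and that~\eqref{eqn_estimate_clas} is available.

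The separate $\eps$-term (as opposed to $\eps^2$) in the stated bound is more delicate and is, I expect, the main obstacle. It does not arise from the generic estimate above, which would only produce $\eps^2$; rather, it reflects the mismatch at $t=0$ between the hyperbolic and parabolic flows, since $m_0(0)=d^{-1}(\f(0)-\calK p_0(0))$ need not equal $m(0)$. The quantity $\calK p(0) - \f(0) + d\,m(0)$ appearing in the bound is precisely $d$ times this initial flow defect $m(0)-m_0(0)$ (using $p(0)=p_0(0)$). I would isolate this boundary-layer contribution by writing $\eps\,\dot m = \eps\,\dot m_0 + \eps\,(\dot m - \dot m_0)$: the second piece feeds the $\eps^2$ generic estimate, while the first piece, integrated against $e_p$, produces after integration by parts in time a boundary term $\eps\,(m-m_0)(0)$-type contribution that, through the coercive Gronwall argument, contributes at order $\eps$ with the coefficient $\dm^{-2}(1+(\cLap\dm)^{-1})$ shown. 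Tracking the constants through the Young-inequality splitting so that exactly this prefactor emerges is the bookkeeping-heavy part; the conceptual content is the clean separation of the initial-layer term (order $\eps$) from the smooth interior term (order $\eps^2$). A concluding Grönwall step, trivial here because $a\ge 0$ makes the lower-order term sign-definite, closes the estimate.
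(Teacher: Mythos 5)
Your first two paragraphs reproduce, in slightly different packaging, exactly what the paper does: the paper obtains the same two bounds by testing \eqref{eqn_inproof_pp0_a}--\eqref{eqn_inproof_pp0_b} once with $(p-p_0,\,m-m_0)$ and once with $(p-p_0,\,\tfrac{1}{d}\calK(p-p_0))$ --- your elimination of $m-m_0$ followed by testing with $e_p$ is the same computation in test-function form --- and it likewise controls $\int_0^t\Vert\dot m\Vert^2\ds$ by applying \eqref{eqn_estimate_clas} to the time-differentiated solution, justified by Lemma~\ref{lem_clas_sol_B}. Up to that point the proposal is sound.

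The genuine gap is in your third paragraph, i.e.\ precisely where the order-$\eps$ term must be produced. Your premise that the generic estimate ``would only produce $\eps^2$'' (and, relatedly, that \eqref{eqn_estimate_clas} bounds $\Vert\dot m\Vert_{L^2(0,T;L^2(\E))}$ purely by data norms) is false: applying \eqref{eqn_estimate_clas} to $(\dot\pt,\dot\mt)$ necessarily carries the initial terms $\Vert\dot\pt(0)\Vert^2+\eps\Vert\dot\mt(0)\Vert^2$, and by \eqref{eqn_inproof_ph_b} at $t=0$ one has $\eps\Vert\dot\mt(0)\Vert^2\le 2\eps\Vert\dot{\mb}(0)\Vert^2+\tfrac{2}{\eps}\Vert\calK p(0)+dm(0)-\f(0)\Vert^2$, which is of size $1/\eps$ for inconsistent data. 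This $1/\eps$, multiplied by the $\eps^2$ produced by Young's inequality in the two energy estimates, is exactly the paper's source of the $\eps$-term and of the prefactor $\dm^{-2}\big(1+(\cLap\dm)^{-1}\big)$; no separate boundary-layer argument is needed, one only has to \emph{not} absorb the initial terms into $C_\text{data}$. Your substitute mechanism, by contrast, fails on three counts: (i) the splitting $\eps\dot m=\eps\dot m_0+\eps(\dot m-\dot m_0)$ presupposes $\dot m_0\in L^2(0,T;L^2(\E))$, which by Lemma~\ref{lem_weak_op_B_noeps_reg} requires the existence of $\pstar$ satisfying \eqref{def_pstar} --- an assumption of Theorem~\ref{thm_p0m0_2}, not of this theorem; (ii) the roles of the two pieces are reversed: when $m(0)\neq m_0(0)$ it is $\dot m-\dot m_0$ that carries the initial layer, with $\int_0^t\Vert(\dot m-\dot m_0)(s)\Vert^2\ds$ of order $1/\eps$, while $\eps\dot m_0$ is the smooth contribution, so declaring that ``the second piece feeds the $\eps^2$ generic estimate'' is exactly backwards; and (iii) integrating $\eps\int_0^t\langle\calK^\ast d^{-1}\dot m_0,(p-p_0)(s)\rangle\ds$ by parts in time yields the boundary term $\eps\,\langle\calK^\ast d^{-1}m_0(0),(p-p_0)(0)\rangle$ at $s=0$, which vanishes because $(p-p_0)(0)=0$; hence no $(m-m_0)(0)$-type contribution can arise this way, and the claimed coefficient cannot be recovered from that route.
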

\begin{proof}
We consider $p-p_0$ as test function in \eqref{eqn_inproof_pp0_a} and $m-m_0$ as test function \eqref{eqn_inproof_pp0_b}. Adding and integrating the resulting equations, we obtain by the non non-negativity of~$a$ and Young's inequality 
\begin{align}
\label{eqn_inproof_p0m0}
  \Vert (p-p_0)(t) \Vert^2 + 2 \dm \int_0^t \Vert (m-m_0)(s) \Vert^2 \ds 
  &\le \frac{\eps^2}{2 \dm} \int_0^t \Vert \dot m(s)\Vert^2 \ds.
\end{align}
On the other hand, using $p-p_0$ and $\frac{1}{d} \calK(p-p_0)$ as test functions in~\eqref{eqn_inproof_pp0}, we get 
\begin{align}
\label{eqn_inproof_p0m0_2}
  \Vert (p-p_0)(t) \Vert^2 + 2 \cLap \int_0^t \Vert (p-p_0)(s) \Vert^2_{H^1(\bE)} \ds 
  &\le \frac{\eps^2}{2\cLap \dm^2} \int_0^t \Vert \dot m(s)\Vert^2 \ds.
\end{align}
Note that we used that system~\eqref{eqn_op_B_eps} has a classical solution by Lemma~\ref{lem_clas_sol_B}. Further, we note that the derivative of the classical solution is again a mild solution of~\eqref{eqn_op_B_eps}. Hence, with an estimate of the form \eqref{eqn_estimate_clas} we obtain 
\begin{align*}
  \dm \int_0^t\Vert \dot m (s)\Vert^2 \ds
  &\le 2\dm \int_0^t\Vert \dot \mb (s)\Vert^2 \ds + 2\dm \int_0^t\Vert \dot \mt (s)\Vert^2 \ds  \\
  &\le 2\dm \Vert \mb \Vert^2_{H^1(0,T;L^2(\E))} + 2 \Big( \Vert \dot \pt(0) \Vert^2 + \eps\, \Vert \dot \mt(0)\Vert^2 + \widetilde C_\text{data} \Big)
\end{align*}
with a constant 
\[
  \widetilde C_\text{data} 
  = \widetilde C_\text{data} \big( \Vert \pb\Vert^2_{H^2(0,T;L^2(\E))}, \Vert \mb\Vert^2_{H^2(0,T;L^2(\E))}, \Vert \f\Vert^2_{H^1(0,T;L^2(\E))}, \Vert \g\Vert^2_{H^1(0,T;L^2(\E))}, \Vert \h\Vert^2_{H^2(0,T;\R^\Nin)} \big).
\]
It remains to bound the initial values of $\dot \pt$ and $\dot \mt$. For $\dot \pt(0)$ we use \eqref{eqn_inproof_ph_a} and the fact that we can bound $\calK^* \mt(0)$ by $\Vert \mstar \Vert_{L^2(\E)}$. For the estimate of $\dot \mt(0)$ we apply~\eqref{eqn_inproof_ph_b} and~\eqref{eqn_op_B_station_b} to obtain
\begin{align*}
 \eps\, \Vert \dot \mt(0)\Vert^2
  &\le 2\eps\, \Vert \dot {\mb}(0) \Vert^2 + \frac 2\eps\, \Vert \calK \pt(0) + d \mt(0) - \f(0) + \calK \Ce^- h(0) \Vert^2 \\
  &= 2\eps\, \Vert \dot {\mb}(0) \Vert^2 + \frac 2\eps\, \Vert \calK p(0) + d m(0) - f(0)  \Vert^2.
\end{align*}
Finally, we apply Corollary~\ref{cor_op_B_station} to bound $\Vert \pb\Vert_{H^2(0,T;L^2(\E))}$ and $\Vert \mb\Vert_{H^2(0,T;L^2(\E))}$ in terms of~$\Vert \r\Vert_{H^2(0,T;\R^\Nout)}$. 
\end{proof}
As mentioned above, we now consider system~\eqref{eqn_inproof_pp0} as a hyperbolic system. Thus, we need to assume the existence of $\dot m_0$, which then appears on the right-hand side. 
\begin{theorem}[First-order approximation II]
\label{thm_p0m0_2}
Suppose right-hand sides $\f \in H^1(0,T;L^2(\E))$, $\g \in W(0,T;L^2(\E),H^{-1}(\bE))$, $\h \in H^2(0,T; \R^\Nin)$, and $\r \in H^1(0,T; \R^\Nout)$. Further, assume consistent initial data $p(0)\in H^1(\bE)$ and $m(0) \in L^2(\E)$ and that there exists a function $\pstar\in L^2(\E)$ satisfying~\eqref{def_pstar}. Then, for $t\le T$ it holds that  
\begin{align*}
  \Vert (p-p_0)(t) \Vert^2 + \eps\, \Vert (m-m_0)(t) \Vert^2 &+ \dm \int_0^t \Vert (m-m_0)(s) \Vert^2 \ds\\
  &\qquad \le \frac{\eps}{\dm} \, \Vert \calK p(0) - f(0) + d m(0) \Vert^2 + \eps^2\,  C_\text{data}
\end{align*}
with a constant 
\[
  C_\text{data} 
  = C_\text{data} \big( \Vert \pstar \Vert^2_{L^2(\E)}, \Vert \f\Vert^2_{H^1(0,T;L^2(\E))}, \Vert \g\Vert^2_{W(0,T;L^2(\E),H^{-1}(\bE))}, \Vert \h\Vert^2_{H^2(0,T;\R^\Nin)}, \Vert \r\Vert^2_{H^1(0,T;\R^\Nout)} \big).
\]
\end{theorem}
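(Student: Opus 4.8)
The plan is to recast the difference system~\eqref{eqn_inproof_pp0} as an $\eps$-hyperbolic system of the same type as~\eqref{eqn_op_B_eps}, so that an energy estimate of the form~\eqref{eqn_estimate_clas} applies, but now driven by $\dot m_0$ instead of $\dot m$. Writing $e_p := p - p_0$ and $e_m := m - m_0$ and substituting $\dot m = \ddt\, e_m + \dot m_0$ into~\eqref{eqn_inproof_pp0_b}, the second equation becomes $\eps\, \dot e_m + \calK e_p + d e_m = -\eps\, \dot m_0$, while~\eqref{eqn_inproof_pp0_a} stays $\dot e_p + a e_p - \calK^* e_m = 0$. Since $p$ and $p_0$ satisfy the same boundary condition, $\Ce e_p = 0$, so that $e_p \in H^1_0(\bE)$ and $(e_p, e_m)$ solves a reduced system of exactly the form~\eqref{eqn_inproof_ph} with the role of $\g$ played by $0$, that of $\f$ by $-\eps\, \dot m_0$, and with homogeneous initial potential $e_p(0) = 0$.

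First I would test~\eqref{eqn_inproof_pp0_a} with $e_p$ and the rewritten second equation with $e_m$; the duality $\langle \calK^* e_m, e_p\rangle = \langle \calK e_p, e_m\rangle = (\px e_p, e_m)$ makes the two cross terms cancel. Using $a \ge 0$ and $d \ge \dm$ together with a Young estimate on $-\eps(\dot m_0, e_m)$ that absorbs half of the resulting $\dm\Vert e_m\Vert^2$, and then integrating over $[0,t]$, yields $\Vert e_p(t)\Vert^2 + \eps\Vert e_m(t)\Vert^2 + \dm\int_0^t\Vert e_m\Vert^2\ds \le \eps\Vert e_m(0)\Vert^2 + \tfrac{\eps^2}{\dm}\int_0^T\Vert \dot m_0\Vert^2\ds$, which already has the claimed shape.

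Next I would identify the two right-hand side terms. For the initial layer I use $m_0(0) = d^{-1}(\f(0) - \calK p_0(0))$ from~\eqref{eqn_mass_flow_noEps} together with $p_0(0) = p(0)$, which gives $d\, e_m(0) = \calK p(0) - \f(0) + d m(0)$; since $d \ge \dm$, this controls $\eps\Vert e_m(0)\Vert^2$ by a constant multiple of $\tfrac{\eps}{\dm}\Vert \calK p(0) - \f(0) + d m(0)\Vert^2$ (here I would track the precise power of $\dm$). For the second term I invoke Lemma~\ref{lem_weak_op_B_noeps_reg}: under the present hypotheses on $\f, \g, \h, \r$ and the existence of $\pstar$ satisfying~\eqref{def_pstar}, the limit solution enjoys $m_0 \in H^1(0,T;L^2(\E))$, so that $\int_0^T\Vert\dot m_0\Vert^2\ds$ is bounded by $C_\text{data}$ in terms of the stated data norms. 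This is exactly the point where the assumption~\eqref{def_pstar} (rather than~\eqref{def_mstar} as in Theorem~\ref{thm_p0m0}) is used, and it is what allows the weaker requirement $\g \in W(0,T;L^2(\E),H^{-1}(\bE))$ here.

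The main obstacle I anticipate is regularity. Under $\g \in W(0,T;L^2(\E),H^{-1}(\bE))$ the $\eps$-hyperbolic solution $(p,m)$ of~\eqref{eqn_op_B_eps} need not be classical (Lemma~\ref{lem_clas_sol_B} asks for $\g \in H^1(0,T;L^2(\E))$), so $\dot e_m$ may fail to exist as an $L^2$-function and the test-function computation is not literally justified. I would resolve this exactly as in the passage following~\eqref{eqn_estimate_clas}: derive the identity first for smooth data admitting a classical solution, observe that $(e_p, e_m)$ is the mild solution of a reduced Cauchy problem of the type in Lemma~\ref{lem_mild_sol_B} with the $L^2$-in-time right-hand side $-\eps\, \dot m_0$, and pass to the limit by density of $D(\A)$ in $L^2(\E)\times L^2(\E)$ and of the smooth data, all appearing norms being continuous under this approximation. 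The remaining steps — the cancellation, the Young estimate, and the two data bounds — are then routine.
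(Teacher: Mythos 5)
Your proposal takes essentially the same route as the paper's proof: rewrite the difference system~\eqref{eqn_inproof_pp0} by adding $-\eps\,\dot m_0$ to the second equation, invoke Lemma~\ref{lem_weak_op_B_noeps_reg} to secure $\dot m_0 \in L^2(0,T;L^2(\E))$ (this is precisely where~\eqref{def_pstar} and the weaker assumption on $\g$ enter), and conclude with a mild-solution energy estimate of the form~\eqref{eqn_estimate_clas} for the resulting hyperbolic system with right-hand sides $0$ and $-\eps\,\dot m_0$. The paper compresses this into three lines, and your additional details -- the cancellation of the cross terms, the identification $(m-m_0)(0) = d^{-1}\big(\calK p(0) - \f(0) + d\,m(0)\big)$, and the density argument justifying the testing for mild solutions -- are exactly what its phrase ``as in~\eqref{eqn_estimate_clas}'' leaves implicit.
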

\begin{proof}
Adding $-\eps\dot m_0$ to~\eqref{eqn_inproof_pp0_b}, we obtain the system 
\begin{alignat*}{5}
 \ddt (p-p_0)&\, +\, & a (p-p_0)&\,-\,& \calK^* (m-m_0) &= 0  &&\qquad \text{in } H^{-1}(\bE), \\
 \eps\, \ddt (m-m_0)&\, +\, &\calK (p-p_0)&\,+\,& d (m-m_0) &= -\eps\, \dot m_0 &&\qquad \text{in }[L^2(\E)]^*. 
\end{alignat*}
By Lemma~\ref{lem_weak_op_B_noeps_reg} the given assumptions guaranty the existence of $\dot m_0 \in L^2(0,T;L^2(\E))$. The result then follows by an estimate of the mild solution as in \eqref{eqn_estimate_clas}.
\end{proof}
\begin{remark}
\label{rem_firstorderineps}
In the case $\eps \dot m(0) = -\calK p(0) + f(0) - d m(0) = 0$, which is equivalent to $m(0) = m_0(0)$, Theorems~\ref{thm_p0m0} and~\ref{thm_p0m0_2} state that~$p_0$ is a first-order approximation of $p$ in terms of $\eps$. Further, $m_0$ is a first-order approximation of $m$ in $L^2(0,T;L^2(\E))$ and, under the conditions of Theorem~\ref{thm_p0m0_2}, an approximation of order $\frac 12$ in $C([0,T],L^2(\E))$.
\end{remark}
\begin{remark}
\label{rem_finiteinfinite}
The estimates of Theorems~\ref{thm_p0m0} and~\ref{thm_p0m0_2} are in line with \cite[Th.~1]{EggK17ppt}. For $m-m_0$ the obtained results also match with the finite-dimensional case analyzed in \cite[Ch.~2.5, Th.~5.1]{KokKO99}. For~$p-p_0$, however, one has $\|p-p_0\|_{L^\infty(0,T)} = \O(\eps)$ in the finite-dimensional case, independent of the initial data. We emphasize that the derived estimates are optimal in the infinite dimensional case as shown in Appendix~\ref{app_sharpness} and numerically confirmed in Section~\ref{sect:numerics}.
\end{remark}
\begin{remark}
Comparable estimates for the Lagrange multiplier, which only exists in a distributional sense under the given assumptions of Theorems~\ref{thm_p0m0} and~\ref{thm_p0m0_2}, use the inf-sup stability of $\Ce$ and read  
\begin{align*}
  \big| \int_0^t (\lambda - \lambda_0)(s) \ds \big|^2 
  \lesssim \| (p-p_0)(t)\|^2 +  t \int_0^t \| (m-m_0)(s)\|^2 \ds.
\end{align*}
Thus, the error in the primitives is of the same order as the pressure. 
\end{remark}
%
%
\subsection{Second-order approximation}\label{sect:expansion:second} 
In order to obtain a better approximation of $p$ and $m$, which we will exploit for the time discretization in Section~\ref{sect:timeInt}, we include the second term of the $\eps$-expansion~\eqref{eqn_epsExp}. The tuple $(p_1, m_1, \lambda_1)$ solves the system 
\begin{subequations}
\label{eqn_op_B_approx_1}
\begin{alignat}{5}
 \dot p_1 &\,+\, & a p_1 &\, - \, & \calK^* m_1 &\, +\, &\Ce^*\lambda_1 &= 0  &&\qquad \text{in }[H^1(\bE)]^*, \label{eqn_op_B_approx_1_a}\\
 & & \calK p_1 &\, +\, & d m_1 & & &= -\dot m_0  &&\qquad \text{in }[L^2(\E)]^*, \label{eqn_op_B_approx_1_b}\\
 & & \Ce p_1 &  & & & &= 0 &&\qquad \text{in } \R^{\Nin} \label{eqn_op_B_approx_1_c}
\end{alignat}
\end{subequations}
with the initial condition $p_1(0) = 0$. As usual, we first discuss the solvability of the system. Since \eqref{eqn_op_B_approx_1} can be written as a parabolic equation for $p_1$, we only need to analyze the regularity of the right-hand side, i.e., of $m_0$. The weak differentiability of $m_0$ has been discussed in Lemma~\ref{lem_weak_op_B_noeps_reg} such that an application of Lemma~\ref{lem_p0m0} directly leads to the following result.
\begin{lemma}[Existence of a weak solution $(p_1, m_1)$]\label{lem_p1m1}
Assume $\f\in H^1(0,T;L^2(\E))$, $\g\in W^{1,2}(0,T;L^2(\E), H^{-1}(\bE))$, $\h \in H^2(0,T; \R^{\Nin})$, and $\r \in H^1(0,T; \R^{\Nout})$. Further, let the initial data $p(0)\in H^1(\bE)$ be consistent and some $\pstar\in L^2(\E)$ satisfy~\eqref{def_pstar}. Then, system~\eqref{eqn_op_B_approx_1} is uniquely solvable with 
\[
  p_1 \in C([0,T], L^2(\E)) \cap L^2(0,T;H_0^1(\bE)), \qquad
  m_1 \in L^2(0,T;L^2(\E)).
\]
Furthermore, $\lambda_1$ exists in a distributional sense.
\end{lemma}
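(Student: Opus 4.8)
The plan is to read off system~\eqref{eqn_op_B_approx_1} as a special case of the parabolic limit system~\eqref{eqn_op_B_noEps} treated in Lemma~\ref{lem_p0m0}. Comparing the two, the operators coincide and the data correspond via $\g \mapsto 0$, $\r \mapsto 0$, $\f \mapsto -\dot m_0$, and $\h \mapsto 0$, together with the initial value $p_1(0) = 0 \in L^2(\E)$. Hence the entire statement reduces to checking the hypotheses of Lemma~\ref{lem_p0m0} for these data. The substituted sources $0$, $0$, $0$ trivially satisfy the required regularity, so the only nontrivial condition to verify is that the algebraic source fulfils $-\dot m_0 \in L^2(0,T;L^2(\E))$.

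The main (and essentially only) step is therefore to secure $\dot m_0 \in L^2(0,T;L^2(\E))$. This is precisely the higher-regularity conclusion $m_0 \in H^1(0,T;L^2(\E))$ of Lemma~\ref{lem_weak_op_B_noeps_reg}, so I would verify that the assumptions of Lemma~\ref{lem_p1m1} imply those of Lemma~\ref{lem_weak_op_B_noeps_reg}. The conditions on $\f$, $\h$, $\r$, the consistency of $p_0(0) = p(0) \in H^1(\bE)$, and the existence of $\pstar \in L^2(\E)$ satisfying~\eqref{def_pstar} are identical in both lemmas. The only point needing a remark concerns $\g$: Lemma~\ref{lem_weak_op_B_noeps_reg} requires $\g \in W(0,T;[H^1(\bE)]^\ast, H^{-1}(\bE))$, while here $\g \in W^{1,2}(0,T;L^2(\E), H^{-1}(\bE))$ is assumed; since $H^1(\bE)$ embeds densely into $L^2(\E)$ we have $L^2(\E) \hookrightarrow [H^1(\bE)]^\ast$, so the stronger assumption on $\g$ covers the weaker one and Lemma~\ref{lem_weak_op_B_noeps_reg} applies.

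With $\dot m_0 \in L^2(0,T;L^2(\E))$ established, I would invoke Lemma~\ref{lem_p0m0} for~\eqref{eqn_op_B_approx_1}. Since the boundary datum now vanishes, its reduction step yields the homogeneous elliptic-parabolic problem associated with~\eqref{eqn_op_B_noEps_red}, whose unique solution $p_1$ lies in $H^1_0(\bE)$ rather than merely $H^1(\bE)$; this gives $p_1 \in C([0,T],L^2(\E)) \cap L^2(0,T;H^1_0(\bE))$. The flux is then recovered algebraically from~\eqref{eqn_op_B_approx_1_b} as $m_1 = -d^{-1}(\dot m_0 + \calK p_1)$, in analogy with~\eqref{eqn_mass_flow_noEps}, which is unique and lies in $L^2(0,T;L^2(\E))$, and the distributional multiplier $\lambda_1$ is constructed exactly as for $\lambda_0$ in Lemma~\ref{lem_p0m0}. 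No genuine obstacle is expected beyond the regularity transfer $m_0 \in H^1(0,T;L^2(\E))$ and the embedding bookkeeping for $\g$; everything else is a verbatim instance of the earlier existence result.
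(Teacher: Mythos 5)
Your proposal is correct and follows essentially the same route as the paper: the paper also treats \eqref{eqn_op_B_approx_1} as an instance of the parabolic system \eqref{eqn_op_B_noEps} with data $(\g,\r,\f,\h) \mapsto (0,0,-\dot m_0,0)$, obtains $\dot m_0 \in L^2(0,T;L^2(\E))$ from Lemma~\ref{lem_weak_op_B_noeps_reg}, and then applies Lemma~\ref{lem_p0m0}. Your additional bookkeeping (the embedding $L^2(\E) \hookrightarrow [H^1(\bE)]^\ast$ reconciling the assumptions on $\g$, and the recovery $m_1 = -d^{-1}(\dot m_0 + \calK p_1)$ with $p_1$ landing in $H^1_0(\bE)$ due to the homogeneous constraint) is exactly what the paper leaves implicit.
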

In the following, we are interested in the approximation property of $\hat p = p_0+\eps p_1$ and $\hat m = m_0+\eps m_1$. Obviously, this requires additional regularity assumptions. As noted in the previous subsection, the initial data may cause a reduction in the $\eps$-order of the approximation, regardless of the regularity of the data, cf.~Remark~\ref{rem_firstorderineps}. To focus on the improvements resulting from the incorporation of $p_1$ and $m_1$, we assume in the following that $\calK p(0) - f(0) + d m(0) = 0$, i.e., $m(0) = m_0(0)$. 
\begin{theorem}[Second-order approximation]
\label{thm_p1m1}
Consider right-hand sides $\f, \g \in H^2(0,T;L^2(\E))$, $\h \in H^3(0,T; \R^\Nin)$, and $\r \in H^3(0,T; \R^\Nout)$ with $\dot r(0) = 0$. Further, assume consistent initial data $p(0)\in H^1(\bE)$ and $m(0) \in L^2(\E)$ as well as the existence of $\mstar\in L^2(\E)$ satisfying \eqref{def_mstar} and of $\pstar\in H^1_0(\bE)$ satisfying \eqref{def_pstar}. Then, assuming $\calK p(0) - f(0) + d m(0) = 0$, we obtain for $t\le T$, 
\begin{align*}
  \Vert (p-\hat p)(t)\Vert^2 + \cLap \int_0^t \Vert (p-\hat p)(s) \Vert^2_{H^1(\bE)} \ds &+ \dm \int_0^t \Vert (m-\hat m)(s)\Vert^2\ds \\
  &\qquad\le \eps^3 \hat C\, \Vert \dot f(0) - \calK \dot p (0)\Vert^2 +  \eps^4 \hat C_\text{data} 
\end{align*}
with constants $\hat C = \hat C(\cLap, \dm)$ and 
\[
	\hat C_\text{data} 
	= \hat C_\text{data} \big( \Vert \pstar\Vert^2_{H^1(\bE)}, \Vert \f\Vert^2_{H^2(0,T;L^2(\E))}, \Vert \g\Vert^2_{H^2(0,T;L^2(\E))}, \Vert \h\Vert^2_{H^3(0,T;\R^\Nin)}, \Vert \r\Vert^2_{H^3(0,T;\R^\Nout)} \big).
\]
\end{theorem}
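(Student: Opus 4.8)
The plan is to mirror the first-order analysis of Theorems~\ref{thm_p0m0} and~\ref{thm_p0m0_2}, but applied to the residual $(e_p, e_m) := (p - \hat p, m - \hat m)$. Subtracting the defining systems~\eqref{eqn_op_B_noEps} and~\eqref{eqn_op_B_approx_1} (the latter scaled by $\eps$) from~\eqref{eqn_op_B_eps}, and using that $p$, $p_0$, and $p_1$ share the same boundary data so that $\Ce e_p = 0$ and the multiplier drops out against $H^1_0(\bE)$ test functions, I obtain the error system
\begin{align*}
\ddt e_p + a e_p - \calK^* e_m &= 0, \\
\calK e_p + d e_m &= -\eps\, \ddt(m - m_0),
\end{align*}
posed in $H^{-1}(\bE)$ and $[L^2(\E)]^*$, with $e_p(0) = 0$. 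This has exactly the structure of the first-order error system~\eqref{eqn_inproof_pp0}, with the forcing $\dot m$ replaced by $\ddt(m-m_0)$. Substituting $\ddt(m-m_0) = \ddt e_m + \eps\dot m_1$ (from~\eqref{eqn_op_B_approx_1_b}) turns the algebraic equation into the hyperbolic form $\eps\, \ddt e_m + \calK e_p + d e_m = -\eps^2\dot m_1$, whose only inhomogeneity is now of order $\eps^2$. Under the hypotheses $m(0) = m_0(0)$ and $p_1(0) = 0$, the initial data reduce to $e_p(0) = 0$ and $e_m(0) = -\eps m_1(0)$.

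Testing the hyperbolic form with $(e_p, e_m)$ and integrating, exactly as in the derivation of~\eqref{eqn_estimate_clas}, the $\calK$ cross-terms cancel and the non-negativity of $a$ together with $d \ge \dm$ yields
$$ \Vert e_p(t)\Vert^2 + \eps\Vert e_m(t)\Vert^2 + \dm\int_0^t \Vert e_m(s)\Vert^2\ds \le \eps^3\Vert m_1(0)\Vert^2 + \frac{\eps^4}{\dm}\int_0^t\Vert\dot m_1(s)\Vert^2\ds. $$
This already controls $\Vert e_p(t)\Vert^2$ and $\dm\int_0^t\Vert e_m\Vert^2$. For the missing term $\cLap\int_0^t\Vert e_p\Vert^2_{H^1(\bE)}$ I use the parabolic route: eliminating $e_m = -d^{-1}\calK e_p - \eps d^{-1}\ddt(m-m_0)$ gives $\ddt e_p + a e_p + \wlap e_p = -\eps\calK^* d^{-1}\ddt(m-m_0)$, and testing with $e_p$ and invoking the ellipticity of $\wlap$ from Lemma~\ref{lem_wlap} produces
$$ \Vert e_p(t)\Vert^2 + \cLap\int_0^t\Vert e_p(s)\Vert^2_{H^1(\bE)}\ds \lesssim \frac{\eps^2}{\cLap\dm^2}\int_0^t\Vert\ddt(m-m_0)(s)\Vert^2\ds. $$

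It remains to estimate $\int_0^t\Vert\ddt(m-m_0)\Vert^2$ and the value $m_1(0)$. Differentiating the hyperbolic first-order error system of Theorem~\ref{thm_p0m0_2} in time, the pair $(\ddt(p-p_0), \ddt(m-m_0))$ solves the same system with forcing $-\eps\ddot m_0$; since $(p-p_0)(0) = 0$ and $(m-m_0)(0) = 0$, its initial data are $\ddt(p-p_0)(0) = 0$ and $\ddt(m-m_0)(0) = -\dot m_0(0)$. An energy estimate of the form~\eqref{eqn_estimate_clas} then gives $\int_0^t\Vert\ddt(m-m_0)\Vert^2 \lesssim \eps\Vert\dot m_0(0)\Vert^2 + \eps^2\int_0^t\Vert\ddot m_0\Vert^2$. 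Finally I identify the second initial layer: from~\eqref{eqn_mass_flow_noEps} one has $\dot m_0(0) = d^{-1}(\dot f(0) - \calK\dot p_0(0))$, the consistency $m(0) = m_0(0)$ and $\dot r(0)=0$ give $\calK\dot p_0(0) = \calK\dot p(0)$, hence $\Vert\dot m_0(0)\Vert^2 \lesssim \Vert\dot f(0) - \calK\dot p(0)\Vert^2$; likewise $m_1(0) = -d^{-1}\dot m_0(0)$ by~\eqref{eqn_op_B_approx_1_b}, so $\Vert m_1(0)\Vert^2 \lesssim \Vert\dot f(0) - \calK\dot p(0)\Vert^2$. Combining the three displays and collecting the order-$\eps^3$ and order-$\eps^4$ contributions yields the claim, with $\hat C$ depending only on $\cLap$ and $\dm$.

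The main obstacle I expect is the control of the higher time derivatives $\ddt(m-m_0)$, $\ddot m_0$, and $\dot m_1$ together with the rigorous identification of the second initial layer. This forces one to differentiate the parabolic and correction systems once more in time and to re-run the regularity arguments of Lemmas~\ref{lem_weak_op_B_noeps_reg} and~\ref{lem_p1m1} at the level of these derivatives; it is precisely here that the elevated regularity ($\f,\g\in H^2$, $\h\in H^3$, $\r\in H^3$) and the stronger compatibility conditions ($\pstar\in H^1_0(\bE)$, $\dot r(0) = 0$) are consumed, both to guarantee $\ddot m_0, \dot m_1\in L^2(0,T;L^2(\E))$ with the stated data bounds and to render $\calK\dot p_0(0)$ a well-defined object in $L^2(\E)$, so that the initial-layer identification $\Vert m_1(0)\Vert \lesssim \Vert\dot f(0)-\calK\dot p(0)\Vert$ is legitimate.
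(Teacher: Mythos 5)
Your setup faithfully mirrors the paper's: the error system for $(p-\hat p,\,m-\hat m)$ with forcing $-\eps\,\ddt(m-m_0)$, the two energy estimates (hyperbolic form and parabolic form via $\wlap$), and the identification of the initial layers $m_1(0)=-d^{-1}\dot m_0(0)$ and $\dot m_0(0)=d^{-1}(\dot\f(0)-\calK\dot p(0))$ (made rigorous by $\pstar\in H^1_0(\bE)$) are all correct and reproduce the $\eps^3$-term of the statement. The gap lies in the closing step, where you bound everything by the parabolic-limit quantities $\int_0^t\Vert\ddot m_0\Vert^2\ds$ and $\int_0^t\Vert\dot m_1\Vert^2\ds$ and assert that the hypotheses of the theorem guarantee $\ddot m_0,\dot m_1\in L^2(0,T;L^2(\E))$ with data bounds. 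They do not. Since $\ddot m_0=d^{-1}(\ddot\f-\calK\ddot p_0)$ by \eqref{eqn_mass_flow_noEps}, you need $\ddot p_0\in L^2(0,T;H^1(\bE))$, i.e.\ you must differentiate the parabolic problem \eqref{eqn_op_B_noEps_pot} twice and re-run Lemma~\ref{lem_weak_op_B_noeps_reg} at that level. This requires the new initial value $\ddot{\pot}(0)=\dot\F(0)-\wlap\pstar$ (with $\F$ the right-hand side of \eqref{eqn_op_B_noEps_pot}) to lie in $L^2(\E)$; for $\pstar\in H^1_0(\bE)$ one only has $\wlap\pstar\in H^{-1}(\bE)$, so this is a strictly stronger, second-order compatibility condition --- exactly the extra assumption ``$\exists\,\pstar'\in L^2(\E)$'' that the paper imposes in Lemma~\ref{lem_disc_parabolic_estimate_coupled} but deliberately does \emph{not} assume in Theorem~\ref{thm_p1m1}. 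The failure is real, not just technical: on a single edge with $d=1$, $a=0$, vanishing data and homogeneous Dirichlet conditions (the setting of Appendix~\ref{app_sharpness}), the hypotheses permit $\pstar=\partial_{xx}p(0)\in H^1_0$ while $\partial_{xx}\pstar\in H^{-1}\setminus L^2$, and then $\ddot p_0(t)=e^{t\partial_{xx}}\partial_{xx}\pstar$ satisfies $\int_0^T\Vert\ddot p_0\Vert^2_{H^1}\dt=\infty$, so the right-hand sides of your first and third displays are $+\infty$. Note also that your step 1 cannot even be formulated without $\dot m_1\in L^2(0,T;L^2(\E))$, since $\dot m_1$ is the forcing you test against.

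This is precisely why the paper takes the other branch at this fork: it never differentiates the limit system twice. Instead, $\int_0^t\Vert\dot m-\dot m_0\Vert^2\ds$ is estimated from the once-differentiated error system \eqref{eqn_inproof_pp0} treated \emph{parabolically}, with forcing $\eps\,\ddot m$ --- the second derivative of the \emph{hyperbolic} solution, not of $m_0$. For every $\eps>0$ this quantity is finite and is controlled by the mild-solution estimate \eqref{eqn_estimate_clas} applied to the twice-differentiated system \eqref{eqn_inproof_ph}; there, $\dot\r(0)=0$ is consumed to obtain $\dot\mt(0)=0$ and hence $\Vert\ddot\pt(0)\Vert\in L^2(\E)$, while the blow-up $\Vert\ddot m(0)\Vert\sim\eps^{-1}\Vert\dot\f(0)-\calK\dot p(0)\Vert$ is absorbed by the $\eps$-weight in \eqref{eqn_estimate_clas} and, after multiplication with the accumulated prefactor $\eps^4$, yields exactly the $\eps^3\hat C\,\Vert\dot\f(0)-\calK\dot p(0)\Vert^2$ term. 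To repair your argument you must either replace your step 3 (and the bound on $\dot m_1$) by this hyperbolic-side estimate of $\int\Vert\ddot m\Vert^2$, or strengthen the hypotheses by the $\pstar'$-compatibility --- which would prove a weaker theorem than the one stated.
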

\begin{proof}
We consider the difference of the exact solution and $(\hat p, \hat m)$. Since $p-\hat p$ has vanishing boundary values, we can omit the Lagrange multiplier and obtain the system 
\begin{alignat*}{5}
 \ddt (p-\hat p) &\, + \,& a(p-\hat p) &\, -\, & \calK^* (m- \hat m) &= 0  &&\qquad \text{in }H^{-1}(\bE), \\
 & & \calK (p-\hat p)&\, +\, & d (m-\hat m) &= -\eps\, (\dot m - \dot m_0) &&\qquad \text{in }[L^2(\E)]^*
\end{alignat*}
with initial condition $(p-\hat p)(0) = 0$. Following the proof of Theorem~\ref{thm_p0m0}, we obtain
\begin{multline*} 
  \Vert (p-\hat p)(t)\Vert^2 + \cLap \int_0^t \Vert (p-\hat p)(s) \Vert^2_{H^1(\bE)} \ds  + \dm \int_0^t \Vert (m-\hat m)(s)\Vert^2 \ds\\
  \le  \frac{\eps^2}{4\dm^2}\Big(1+\frac{1}{\cLap \dm}\Big) \int_0^t \Vert (\dot m-\dot m_0)(s)\Vert^2 \ds. 
\end{multline*}
For an estimate of the integral on the right-hand side, we consider the formal derivative of system~\eqref{eqn_inproof_pp0}. Similar to equation~\eqref{eqn_inproof_p0m0}, we have
\begin{align}
\label{eqn_inproof_phat_estimate}
  \Vert (\dot p - \dot p_0)(t)\Vert^2 + \dm \int_0^t \Vert(\dot m-\dot m_0)(s)\Vert^2 \ds 
  \le \Vert (\dot p - \dot p_0)(0)\Vert^2 + \frac{\eps^2}{\dm} \int_0^t \Vert \ddot m (s)\Vert^2 \ds.
\end{align}
The assumption $\eps\, \dot m(0) = -\calK p(0) + f(0) - d m(0) = 0$ implies together with $p(0)=p_0(0)$ and equation~\eqref{eqn_inproof_pp0_b} that $m(0)=m_0(0)$. Inserting this in equation~\eqref{eqn_inproof_pp0_a}, we obtain $\dot p(0) = \dot p_0(0)$ in $L^2(\E)$ such that the first term vanishes. 
It remains to find an estimate of the integral of $\ddot m$. For this, we decompose $m$ into $m = \mb + \mt$ as in Theorem~\ref{thm_p0m0}. Note that $\mb$ and its derivatives only depend on the given data by Corollary~\ref{cor_op_B_station}. Using once more the formal derivative, we obtain by estimate~\eqref{eqn_estimate_clas} that 
\[
  \dm \int_0^t\big\Vert \ddot\mt (s)\big\Vert^2 \ds
  \le \big\Vert \ddot{\pt}(0) \big\Vert^2 + \eps\, \big\Vert \ddot\mt(0)\big\Vert^2 + \widetilde C_\text{data}.
\]
with  
\[
  \widetilde C_\text{data} 
  = \widetilde C_\text{data} \big( \Vert \f\Vert^2_{H^2(0,T;L^2(\E))}, \Vert \g\Vert^2_{H^2(0,T;L^2(\E))}, \Vert \h\Vert^2_{H^3(0,T;\R^\Nin)}, \Vert \r\Vert^2_{H^3(0,T;\R^\Nout)} \big).
\]
For the initial value of $\ddot{\pt}$ we note that by $\dot m(0) = 0$, $\dot r(0) = 0$, and equation~\eqref{eqn_op_B_station_a} it follows that $\calK^*\dot \mt(0)=0$ in $H^{-1}(\bE)$ and thus, using \eqref{eqn_inproof_ph_a}, 
\[
  \big\Vert \ddot\pt(0) \big\Vert^2
  = \big\Vert \dot\g(0) -a \dot p(0)- \ddot \pb(0) - \Ce^-\ddot\h(0) \big\Vert^2.
\]
For an estimate of $\ddot\mt(0)$ we note that   
\begin{align*}
  \eps^5\, \big\Vert \ddot\mt(0)\big\Vert^2
  \le 2\eps^5\, \big\Vert \ddot m(0)\big\Vert^2 + 2\eps^5\, \big\Vert \ddot\mb(0)\big\Vert^2
  = 2\eps^3\, \big\Vert \dot f(0) - \calK \dot p(0)\big\Vert^2 + 2\eps^5\, \big\Vert \ddot\mb(0)\big\Vert^2, 
\end{align*}
which gives the claimed estimate. We emphasize that the first term on the right-hand side is bounded in terms of the data, since 
\[
  \big\Vert \dot f(0) - \calK \dot p(0)\big\Vert^2
  \le 2\, \big\Vert \dot\f(0)\big\Vert^2 + 2\, \big\Vert \calK \dot p(0)\big\Vert^2.  
\]
For this, we use $\Vert \dot\f(0)\Vert \lesssim \Vert f \Vert_{H^2(0,T;L^2(\E))}$ and $\dot p(0) = \dot p_0 (0) = \dot \pt_0 (0) + \Ce^- \dot{h}(0)$, where $\dot \pt_0 (0)$ is bounded in $H^1_0(\bE)$ by equation~\eqref{eqn_op_B_noEps_pot} and $\pstar\in H^1_0(\bE)$. 
\end{proof}
\begin{remark}
Under the additional assumption $0=\dot{f}(0) - \calK \dot{p}(0) =\eps \ddot m(0)$, which is equivalent to $m(0)=\hat m(0)$ and $\dot{m}(0)=\dot{m}_0(0)$, 
Theorem~\ref{thm_p1m1} states that~$\hat p$ is a second-order approximation of $p$ in terms of $\eps$. Further, $\hat m$ is a second-order approximation of $m$ in $L^2(0,T;L^2(\E))$. 
\end{remark}
\begin{remark}
\label{rem_finiteinfinite2}
The estimate for $m-\hat m$ in Theorem~\ref{thm_p1m1} is again in line with the finite dimensional case, whereas the difference $p-\hat p$ is one order smaller, cf~\cite[Ch.~2.5, Th.~5.2]{KokKO99}. Again, the given estimates are sharp for the infinite dimensional case and numerically validated in Section~\ref{sect:numerics}. 
\end{remark}
Given the assumptions of Theorem~\ref{thm_p1m1} there exist regular solutions $\lambda$ and $\lambda_0$. We close this section with a remark on the approximation property of the Lagrange multiplier. 
\begin{remark}\label{rem:est_lemma}
Under the the assumptions from Theorem~\ref{thm_p1m1}, 
it follows that $\lambda_0$ is a first-order approximation of $\lambda$, i.e., 
\begin{align*}
\int_0^t |(\lambda - \lambda_0)(s)|^2 \ds 
&\lesssim \eps \hat C\, \Vert \dot f(0) - \calK \dot p (0)\Vert^2 +  \eps^2 \max \{ C_\text{data}, \hat C_\text{data} \}
\end{align*}
with the constants~$\hat C$, $C_{\text{data}}$, and~$\hat C_\text{data}$  from Theorems~\ref{thm_p0m0} and~\ref{thm_p1m1}. The associated proof
uses that the parabolic PDE~\eqref{eqn_op_B_noEps}, which is satisfied by the differences $p-p_0$, $m-m_0$, and $\lambda-\lambda_0$, fulfills the assumptions of Lemma~\ref{lem_weak_op_B_noeps_reg}. The order of approximation for $\lambda$ and $\lambda_0$ then follows by the estimates of the integrals of $\|\dot{m}\|^2$ and~$\|\ddot{m}\|^2$ as in the proof of Theorem~\ref{thm_p0m0} and~\ref{thm_p1m1}, respectively. 
\end{remark}
\section{Index of the Semi-discrete Systems}\label{sect:index}
Since the considered system~\eqref{eqn_op_B_eps} is a PDAE, a semi-discretization in space leads to a DAE. This section is devoted to the computation of the {\em differentiation index} of the resulting system. Roughly speaking, this index measures the minimal number of differentiation steps in order to extract an ODE, cf.~\cite[Def.~2.2.2]{BreCP96} for a precise definition and \cite{Meh13} for further index concepts. We emphasize that we do not consider the index of the PDAE.

A spatial discretization, e.g.~by finite elements, turns the linear operators into matrices. At this point, we gather a list of general assumptions on these matrices, rather than prescribing a precise discretization scheme. For the discretization of $m$ and $p$ we consider symmetric and positive definite mass matrices $M_1 \in \R^{n_m,n_m}$ and $M_2 \in \R^{n_p,n_p}$. The mass matrices including the space-dependent damping terms $d$, $a$ are denoted by $\Md \in \R^{n_m,n_m}$, $\Ma \in \R^{n_p,n_p}$, respectively. Both matrices are symmetric and semi-positive definite. Further, the discrete version of the linear operator $\calK$ is denoted by $K \in \R^{n_m,n_p}$.

As discrete versions of the constraint operators $\B$ and $\Ci$ from Section~\ref{sect:prelim:constraints} we introduce the matrices $B \in \R^{\Nin,n_p}$, which we assume to be of full row rank, and $C \in \R^{\Nout,n_p}$. We emphasize that the discretization of the dual operators equal the transpose of the corresponding matrices. 
Note that the data for the constraints, i.e., the Dirichlet data $\h$ and $\r$, are already finite-dimensional and need no further discretization. For the discretization of $\f$ and $\g$ the same notation as for the original right-hand sides. This means that we assume in this section that $\f\colon [0,T] \to \R^{n_m}$ and $\g\colon [0,T] \to \R^{n_p}$. Finally, we define 
\[
	\tilde \g := \g - C^T \r.
\]
Using for the semi-discrete variables the same notation as for the continuous ones, we obtain as spatial discretization of \eqref{eqn_op_B_eps} the system 
\begin{align}
\label{eqn_index2_DAE}
  \begin{bmatrix}
  \eps M_1  &   &   \\ 
  &  M_2 & \\
  &    &  0
  \end{bmatrix}
  \begin{bmatrix} \dot m \\ \dot p \\ \dot \lambda \end{bmatrix}
  = \begin{bmatrix}
  - \Md & -K &  \\
  K^T &  - \Ma & -B^T  \\   
   & -B & 
  \end{bmatrix}
  \begin{bmatrix} m \\ p \\ \lambda \end{bmatrix}
  + \begin{bmatrix} \f \\ \tilde \g \\ \h \end{bmatrix}.
\end{align}
This DAE has index 2. To see this, we consider the derivative of the constraint, which leads to the system 
\[
  \begin{bmatrix}
  \eps M_1  &   &   \\ 
  &  M_2 & B^T\\
  &  B  &  
  \end{bmatrix}
  \begin{bmatrix} \dot m \\ \dot p \\ \lambda \end{bmatrix}
  = \begin{bmatrix}
  -\Md & -K &  \\
  K^T & - \Ma  &  \\   
  &  &  0 
  \end{bmatrix}
  \begin{bmatrix} m \\ p \\ \lambda \end{bmatrix}
  + \begin{bmatrix} \f \\ \tilde \g \\ \dot{\h} \end{bmatrix}.
\]
Note that the vector including the variables on the left-hand side contains the derivatives of $m$ and $p$ but the Lagrange multiplier $\lambda$ without any derivative. Since the matrix~$B$ is assumed to have full row rank and $M_2$ is positive definite, the matrix on the left-hand side is invertible for $\eps>0$. A multiplication from the left by the inverse then yields differential equations for $m$ and $p$ together with an algebraic equation for $\lambda$. Since one differentiation step was sufficient to extract these equations, the original DAE was of index 2.

Next, we discuss the spatial discretization of the parabolic limit case~\eqref{eqn_op_B_noEps}. This means nothing else then setting $\eps=0$ in equation~\eqref{eqn_index2_DAE}. Thus, we have the DAE 
\[
  \begin{bmatrix}
  0  &   &   \\ 
  &  M_2 & \\
  &    &  0
  \end{bmatrix}
  \begin{bmatrix} \dot m \\ \dot p \\ \dot \lambda \end{bmatrix}
  = \begin{bmatrix}
  - \Md & -K &  \\
  K^T & - \Ma  & -B^T  \\   
  & -B & 
  \end{bmatrix}
  \begin{bmatrix} m \\ p \\ \lambda \end{bmatrix}
  + \begin{bmatrix} \f \\ \tilde \g \\ \h \end{bmatrix}.
\]
Since the damping parameter $d$ is positive, $\Md$ is invertible and the first equation allows to write $m$ in terms of~$p$. It remains a system of the form 
\[ 
  M_2\dot p = K^T m(p) - \Ma p - B^T \lambda + \tilde \g, \qquad
  B p  = \h
\]
with the typical semi-explicit index-2 structure, cf.~\cite[Ch.~VII.1]{HaiW96}.
\begin{remark}
Another possible discretization is discussed in~\cite{HucT17} and takes the topology of the network into account leading to an index-1 DAE. Therein, the semi-discrete variables only include the values of $p$ and $m$ at the endpoints of an edge. 
\end{remark}
\begin{remark}
We discuss the {\em port-Hamiltonian structure} of the system equations, cf.~\cite{Van13}. For this, we consider the right-hand sides as inputs of the system. Following the DAE formulation introduced in~\cite[Def.~4]{BeaMXZ17ppt}, we may write~\eqref{eqn_index2_DAE} in the form 
\[
	\begin{bmatrix}
	\eps M_1  &  & \\ 
	&  M_2 & \\
	&  &  0
	\end{bmatrix}
	\begin{bmatrix} \dot m \\ \dot p \\ \dot \lambda \end{bmatrix}
	= \Bigg( 
	\begin{bmatrix} 0 & -K &  \\  K^T & 0 & -B^T \\ & B & 0 \end{bmatrix}
	- \begin{bmatrix}  \Md & & \\  &  \Ma &  \\ & & 0  \end{bmatrix} 
	\Bigg) 
	\begin{bmatrix} m \\ p \\ \lambda \end{bmatrix}
	+ \begin{bmatrix} \f \\ \tilde \g \\ \h \end{bmatrix}
\]
with the sum of a skew-symmetric structure matrix (describing the energy flux) and a symmetric dissipation matrix on the right-hand side. 
%
%
The corresponding energy (also called {\em Hamiltonian}) is given by 
\[
  H(m, p) 
  = \tfrac 12 \eps\, m^T M_1 m + \tfrac 12 p^T M_2 p.  
\]
Note that this depends on the parameter $\eps$. Thus, the system maintains its port-Hamiltonian structure for $\eps=0$ but with a different energy. 
We emphasize that the port-Hamiltonian structure is already given in the continuous system~\eqref{eqn_op_B_eps}, cf.~\cite{JacZ12}.
\end{remark}
%
%
Finally, we turn to the system for the second-order terms of the $\eps$-expansion, i.e., system~\eqref{eqn_op_B_approx_1} with solution $(p_1,m_1,\lambda_1)$. More precisely, we consider the through $\dot m_0$ coupled system of~\eqref{eqn_op_B_noEps} and~\eqref{eqn_op_B_approx_1}. As a consequence, the overall system has no saddle point structure anymore. We take once more the same notion for the continuous and semi-discrete variables and introduce 
\[
  x 
  := \big[\, m^T_0,\ p^T_0,\ p^T_1,\ m^T_1,\ \lambda^T_0,\ \lambda^T_1\, \big]^T.
\]
The spatial discretization of the coupled system then leads to the DAE
\begin{align}
\label{eqn_semidiscr_coupled}
%
  \mathbf{M}\, 
  \dot x   
  = 
  \left[
  \begin{array}{ccc|ccc}  
  & & -K & -\Md\\
  K^T & - \Ma  &  & & -B^T  \\
  &  & - \Ma  & K^T & & -B^T  \\  \hline
  -\Md &  -K &  \\  
  & -B & \\
  & & -B & 
  \end{array}
  \right]
  x
  + \begin{bmatrix} 0 \\  \tilde g \\ 0  \\ f \\ h \\ 0 \end{bmatrix}
\end{align}
with the block-diagonal mass matrix $\mathbf{M} := \diag \{M_1, M_2, M_2, 0, 0, 0\}$. This DAE is again of index 2. To see this, we follow~\cite[p.~456]{HaiW96} and note that the matrix
\begin{align*}
&\begin{bmatrix}   
  -\Md &  -K &  \\  
   & -B & \\
   & & -B  
  \end{bmatrix} 
  \begin{bmatrix}
  M_1^{-1} & & \\
  & M_2^{-1} &  \\
  & & M_2^{-1} 
  \end{bmatrix} 
  \begin{bmatrix}
  -\Md & & \\
  & -B^T &  \\
  K^T & & -B^T 
  \end{bmatrix} 
\\
 &\qquad\qquad=\ \begin{bmatrix}
  \Md M_1^{-1} \Md & KM_2^{-1}B^T & \\
  & BM_2^{-1}B^T  \\
  -BM_2^{-1}K^T & & BM_2^{-1}B^T 
  \end{bmatrix}
\end{align*}
is invertible, due to the full rank property of $B$.   		
\section{Temporal Discretization}\label{sect:timeInt}
In this section, we investigate the behavior of the first- and second-order approximations $p_0, m_0, \lambda_0$ and $\hat p, \hat m, \hat \lambda$ under a discretization in time. We consider Runge-Kutta methods and focus, in particular, on the implicit Euler scheme. The convergence of such schemes for semi-explicit PDAEs of parabolic type was analyzed in~\cite{AltZ18} and is thus applicable for the case with $\eps=0$, i.e., for system~\eqref{eqn_op_B_noEps}. The combination with the approximation results from Section~\ref{sect:expansion} then yields a discretization scheme for the full problem~\eqref{eqn_op_B_eps}. 

Unfortunately, the assumed structure in~\cite{AltZ18} is too restrictive for the analysis of the coupled system consisting of~\eqref{eqn_op_B_noEps} and~\eqref{eqn_op_B_approx_1} and thus, needs an extension. Another major difficulty is the appearance of the variable~$m_0$, which we discuss in Section~\ref{sect:timeInt:second}. 
%
%
\subsection{First-order approximation}\label{sect:timeInt:first}
Based on the results of Section~\ref{sect:expansion}, we propose to approximate $p$ and $m$ by a temporal discretization of~$p_0$ and $m_0$, respectively. Due to the PDAE structure of~\eqref{eqn_op_B_noEps}, we have 'differential' and 'algebraic' equations, which causes difficulties within the time discretization. One particular reason is the high sensitivity to perturbations of the right-hand side, cf.~\cite{Alt15}. In the finite-dimensional case it is well-known that this may decrease the convergence order, see~\cite[Ch.~6]{HaiW96} and~\cite[Ch.~5.2]{KunM06}. 
To bypass these issues, we apply a {\em regularization} to the system involving the so-called {\em hidden constraint}, i.e., the derivative of the constraint. The regularization approach considered in~\cite{AltZ18} introduces an additional Lagrange multiplier~$\mu_0\colon [0,T] \to \R^\Nin$ and adds the hidden constraint explicitly to the system equations. For~\eqref{eqn_op_B_noEps} this procedure results in 
\begin{subequations}
\label{eqn_op_B_noEps_index1}
\begin{alignat}{6}
  \dot p_0 &\,+\, & a p_0 &\,-\ & \calK^* m_0 &\,+\, & \Ce^*\lambda_0 &\,+\, & \Ce^*\mu_0 &= \g-\Ci^*\r  &&\qquad\text{in }[H^1(\bE)]^*, \label{eqn_op_B_noEps_index1_a}\\
  & & \calK p_0 &\,+\, & d m_0 & & & & &= \f  &&\qquad\text{in }[L^2(\E)]^*, \label{eqn_op_B_noEps_index1_b}\\
  & & \Ce p_0 & & & & &\,-\,& M \mu_0 &= \h &&\qquad\text{in } \R^{\Nin}, \label{eqn_op_B_noEps_index1_c}\\
  \Ce \dot{p}_0 & & & & & & & & &= \dot{\h} &&\qquad\text{in } \R^{\Nin} \label{eqn_op_B_noEps_index1_d}
\end{alignat}
\end{subequations}
with an arbitrary invertible matrix $M\in \R^{\Nin \times \Nin}$. Under certain regularity assumptions on~$p_0$, every solution of~\eqref{eqn_op_B_noEps_index1} with a consistent initial condition implies $\mu_0 =0$ and that $(p_0,m_0,\lambda_0)$ solves~\eqref{eqn_op_B_noEps}, cf.~\cite[Lem~3.6]{AltZ18}. Note that the hidden constraint for~$p_0$ is stated explicitly in~\eqref{eqn_op_B_noEps_index1_d}.
\begin{remark}
Another regularization approach was introduced in~\cite{AltH15} and uses dummy variables. For this, let $\P_\text{c}$ be an arbitrary complement of $H^1_0(\bE)$ in $H^1(\bE)$ leading to the unique decomposition $p_0 = \pt_0 + p_{0,\text{c}}$ with $\pt_0 \in H^1_0(\bE)$ and $p_{0,\text{c}} \in \P_\text{c}$. 
Then, the regularization of~\eqref{eqn_op_B_noEps} is given by 
\begin{subequations}
\label{eqn_op_B_noEps_index1_AltH}
\begin{alignat}{6}
  \dot{\pt}_0 &\,+\, & q_{0,\text{c}} &\,+\, & a(\pt_0 + p_{0,\text{c}}) &\,-\, & \calK^* m_0 &\,+\, & \Ce^*\lambda_0 &= \g-\Ci^*\r  &&\qquad\text{in }[H^1(\bE)]^*, \label{eqn_op_B_noEps_index1_AltH_a}\\
  & & & & \calK (\pt_0 + p_{0,\text{c}}) &\,+\, & d m_0 & & &= \f  &&\qquad\text{in }[L^2(\E)]^*, \label{eqn_op_B_noEps_index1_AltH_b}\\
  & & & & \Ce p_{0,\text{c}}\  & & & & &= \h &&\qquad\text{in } \R^{\Nin}, \label{eqn_op_B_noEps_index1_AltH_c}\\
  & & \Ce q_{0,\text{c}} & & & & & & &= \dot{\h} &&\qquad\text{in } \R^{\Nin}  \label{eqn_op_B_noEps_index1_AltH_d}
\end{alignat}
\end{subequations}
including the dummy variable $q_{0,\text{c}} \in \P_\text{c}$, which approximates the derivative of $p_{0,\text{c}}$. By \cite[Th.~2.3]{AltH15} and \cite[Lem.~2.5]{AltZ18} every solution of~\eqref{eqn_op_B_noEps_index1_AltH} generates a solution of~\eqref{eqn_op_B_noEps} by the triple $(\pt_0 + p_{0,\text{c}}, m_0,\lambda_0)$. 
\end{remark}
%
%
\subsubsection{Implicit Euler scheme}\label{sect:timeInt:first:Euler}
We first investigate the approximation obtained by the implicit Euler scheme. Runge-Kutta schemes are then discussed in Section~\ref{sect:timeInt:first:RK}. For the temporal discretization of~\eqref{eqn_op_B_noEps_index1} we consider a uniform partition of the interval $[0,T]$ with step size $\tau=T/n$, $n\in\N$, and time steps $t_j=\tau j$, $j=0,\ldots,n$. The time-discrete system then reads 
\begin{subequations}
\label{eqn_disc_p0_reg}
\begin{alignat}{6}
  \Dt \pj &\,+\, & a\pj &\,-\, & \calK^* \mj &\,+\, & \Ce^*\lj &\,+\, & \Ce^\ast \muj &= \g_j-\Ci^*\r_j  &&\qquad\text{in }[H^1(\bE)]^*, \label{eqn_disc_p0_reg_a}\\
  & & \calK \pj &\,+\, & d \mj & & & & &= \f_j  &&\qquad\text{in }[L^2(\E)]^*, \label{eqn_disc_p0_reg_b}\\
  & & \Ce \pj & & & & &\,-\, & M \muj &= \h_j &&\qquad\text{in }\R^{\Nin}, \label{eqn_disc_p0_reg_c}\\
  \Ce \Dt \pj & & & & & & & & &= \dot{\h}_j &&\qquad\text{in }\R^{\Nin}. \label{eqn_disc_p0_reg_d}
\end{alignat}
\end{subequations}
Therein, $\Dt$ denotes the discrete derivative, i.e., $\Dt\pj := (\pj - \pjm)/\tau$, and $p_{0,0}:=p(0)$. The right-hand sides of~\eqref{eqn_disc_p0_reg} should be appropriate approximations of the right-hand sides of~\eqref{eqn_op_B_noEps_index1} at time~$t_j$. However, the given smoothness may not always allow a point evaluation. 
We introduce the piecewise constant function $\g_\tau\colon [0,T] \to [H^1(\bE)]^\ast$ with $\g_\tau(t):= g_j$ for $t\in (t_{j-1},t_j]$, $j=1,\ldots,n$. Analogously, we define $\f_\tau$, $\h_\tau$, $\dot{\h}_\tau$, and $\r_\tau$. For the convergence analysis we assume that for $\tau\to 0$ we have 
\begin{equation}
\label{ass_disc_righthand_side}
\begin{aligned}
\g_\tau &\to \g \text{ in } L^2(0,T;[H^1(\bE)]^\ast), & \f_\tau &\to \f \text{ in } L^2(0,T;L^2(\E)), & \r_\tau &\to \r \text{ in } L^2(0,T;\R^{\Nout}),\\
\h_\tau &\to \h \text{ in } L^\infty(0,T;\R^{\Nin}), & \dot{\h}_\tau &\to \dot{\h} \text{ in } L^2(0,T;\R^{\Nin}).
\end{aligned}
\end{equation}
\begin{remark}
\label{rem:Dh_doth}
The function~$\dot{\h}_\tau$ denotes the approximation of~$\dot{\h}$ and not the derivative of~$\h_\tau$. 
We emphasize that defining $\dot \h_j$ by the discrete derivative $\Dt \h_j$ makes the performed regularization useless, since the two constraints would be redundant. In fact, the regularization aims to include information on the derivative $\dot{\h}$ to the system rather than using the discrete derivative. 
\end{remark}
\begin{lemma}\label{lem_disc_qualti}
Let the conditions of Lemma~\ref{lem_p0m0} be satisfied and $(p_0,m_0,\lambda_0)$ the solution of the PDAE~\eqref{eqn_op_B_noEps}. Suppose that $\g_\tau$, $\f_\tau$, $\h_\tau$, $\dot{\h}_\tau$, and $\r_\tau$ converge as stated in~\eqref{ass_disc_righthand_side}. Then, system~\eqref{eqn_disc_p0_reg} has a unique solution for every $j=1,\ldots,n$. Further, let $\potC$, $\motC$, and $\muotC$ denote the piecewise constant functions and~$\potL$ the continuous and piecewise linear function defined via~$\pj$, $\mj$, and $\muj$, respectively. Then, as $\tau \to 0$ we have 
\begin{align*}
\potC &\to p_0 &&\text{in } L^2(0,T;H^1(\bE)), & \potL &\to p_0 &&\text{in } L^2(0,T;L^2(\E)),\\
\motC &\to m_0 &&\text{in } L^2(0,T;L^2(\E)), & \muotC &\to 0 &&\text{in } L^\infty(0,T;\R^{\Nin}).
\end{align*}
\end{lemma}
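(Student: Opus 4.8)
The plan is to treat \eqref{eqn_disc_p0_reg} as a sequence of stationary saddle point problems, establish $\tau$-uniform a priori bounds, and then pass to the limit, thereby recasting the statement in the framework of~\cite{AltZ18}. First I would address unique solvability of a single time step. Inserting the algebraic relation $\mj = d^{-1}(\f_j - \calK\pj)$ from~\eqref{eqn_disc_p0_reg_b} into~\eqref{eqn_disc_p0_reg_a} and eliminating $\muj = M^{-1}(\Ce\pj - \h_j)$ via the invertibility of $M$ in~\eqref{eqn_disc_p0_reg_c}, one arrives at a problem for the pair $(\pj,\lj)$ of the form $(\tfrac1\tau\,\mathrm{id} + \wlap + a + \Ce^* M^{-1}\Ce)\pj + \Ce^*\lj = \text{rhs}$ together with the hidden constraint $\Ce\pj = \Ce\pjm + \tau\dot\h_j$ from~\eqref{eqn_disc_p0_reg_d}. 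For $q\in\ker\Ce = H^1_0(\bE)$ the term $\langle\Ce^* M^{-1}\Ce\pj,q\rangle$ vanishes, so the leading operator reduces there to $\tfrac1\tau\,\mathrm{id} + \wlap + a$, which is coercive by Lemma~\ref{lem_wlap} and the non-negativity of $a$; together with the inf-sup stability of $\Ce$ from Lemma~\ref{lem_infsup}, Brezzi's theorem~\cite[Ch.~II.1.1]{BreF91} yields a unique $(\pj,\lj)$, whereupon $\mj$ and $\muj$ are determined. This gives solvability for every $j$.

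Second, I would derive bounds uniform in $\tau$. Testing~\eqref{eqn_disc_p0_reg_a} with $\pj$ (so that $\langle\Ce^*(\lj+\muj),\pj\rangle$ is controlled through the constraint data), using~\eqref{eqn_disc_p0_reg_b} to replace $(\calK^*\mj,\pj)$ by the coercive term $\langle\wlap\pj,\pj\rangle$, and summing the telescoping estimate $(\Dt\pj,\pj)\ge \tfrac1{2\tau}(\|\pj\|^2-\|\pjm\|^2)$ over $j$, a discrete Gronwall argument delivers $\max_j\|\pj\|^2 + \sum_j\tau\,\|\pj\|_{H^1(\bE)}^2$ bounded in terms of the data and $\|p(0)\|$. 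The bound on $\motC$ in $L^2(0,T;L^2(\E))$ follows from~\eqref{eqn_disc_p0_reg_b} and the lower bound $d\ge\dm$. For the multiplier I would telescope the hidden constraint~\eqref{eqn_disc_p0_reg_d} to get $\Ce\pj = \Ce p(0) + \sum_{i\le j}\tau\dot\h_i$, so that $M\muj = \big(\Ce p(0)-\h(0)\big) + \big(\sum_{i\le j}\tau\dot\h_i - (\h_j-\h(0))\big)$; with consistent initial potential and the convergences $\dot\h_\tau\to\dot\h$ in $L^2$ and $\h_\tau\to\h$ in $L^\infty$ from~\eqref{ass_disc_righthand_side}, the right-hand side tends to $0$ uniformly in $j$, which is exactly $\muotC\to 0$ in $L^\infty(0,T;\R^{\Nin})$.

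Third, the bounds let me extract a subsequence along which $\potC\rightharpoonup p^*$ in $L^2(0,T;H^1(\bE))$ and $\motC\rightharpoonup m^*$ in $L^2(0,T;L^2(\E))$, with an associated limit multiplier; since $\potC-\potL$ is $O(\tau)$ in the relevant norm, both interpolants share this limit. Passing to the limit in the piecewise-constant variational form—where the data convergences~\eqref{ass_disc_righthand_side} are precisely what is needed—identifies $(p^*,m^*,\lambda^*)$ as a weak solution of the regularized system~\eqref{eqn_op_B_noEps_index1} with $\mu^*=0$, hence of~\eqref{eqn_op_B_noEps}. Uniqueness from Lemma~\ref{lem_p0m0} forces $p^*=p_0$ and $m^*=m_0$ and removes the need for subsequences. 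To upgrade the potential to the asserted strong convergence in $L^2(0,T;H^1(\bE))$, I would combine weak convergence with a $\limsup$ estimate of $\sum_j\tau\langle\wlap\pj,\pj\rangle$ against $\int_0^T\langle\wlap p_0,p_0\rangle$ coming from the discrete energy identity; ellipticity of $\wlap$ on $H^1_0(\bE)$ then converts norm convergence plus weak convergence into strong convergence, and the analogous argument handles $\motC\to m_0$. This is exactly the qualitative convergence of the implicit Euler scheme for semi-explicit parabolic PDAEs established in~\cite{AltZ18}, to whose abstract setting system~\eqref{eqn_op_B_noEps_index1} conforms.

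The main obstacle I anticipate is upgrading the weak limits to the claimed strong convergences—in particular obtaining $\potC\to p_0$ in $L^2(0,T;H^1(\bE))$ rather than merely weakly—which hinges on a sharp discrete energy identity and the full coercivity of $\wlap$; the $L^\infty$-statement for $\muotC$ is likewise delicate because it demands uniform, rather than merely integral, control of the quadrature error $\sum_{i\le j}\tau\dot\h_i-(\h_j-\h(0))$. A secondary technical point is the low regularity $p(0)\in L^2(\E)$ permitted by Lemma~\ref{lem_p0m0}: since~\eqref{eqn_disc_p0_reg_d} involves $\Ce p(0)$, the argument tacitly uses a consistent, $H^1(\bE)$-regular initial potential (equivalently $\Ce p(0)=\h(0)$), and any initial layer it produces must be shown to be harmless in the $L^2(0,T;\cdot)$ norms.
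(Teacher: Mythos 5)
Your overall strategy is sound and, in substance, it follows the same path as the paper: both proofs begin by inserting the discrete algebraic relation \eqref{eqn_disc_p0_reg_b} into \eqref{eqn_disc_p0_reg_a} (and \eqref{eqn_op_B_noEps_index1_b} into \eqref{eqn_op_B_noEps_index1_a}), so that the scheme becomes the regularized implicit Euler discretization of a constrained parabolic equation for $p_0$ alone. The difference is what happens next: the paper's entire proof is this reduction followed by a citation of \cite[Lem.~4.2 \& Th.~4.3]{AltZ18}, which contain precisely the unique solvability (your Brezzi step) and the qualitative convergence (your a priori bounds, weak limits, identification of the limit, and the strong-convergence upgrade). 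Your proposal is thus an unpacking of the cited black box; it buys self-containedness at the price of redoing a nontrivial amount of PDAE discretization theory, while the paper's route buys brevity at the price of inheriting the hypotheses of~\cite{AltZ18}.

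Two points in your sketch deserve scrutiny. First, the energy estimate as you describe it would fail: testing \eqref{eqn_disc_p0_reg_a} with $\pj$ produces the term $(\lj + \muj, \Ce\pj)_{\R^{\Nin}}$, and although $\Ce\pj$ is prescribed by the constraints, the multiplier $\lj$ is not bounded at this stage, so this pairing is \emph{not} controlled by the constraint data. The standard remedy --- and what the paper does in its quantitative Lemma~\ref{lem_disc_parabolic_estimate} --- is to decompose $\pj = \pjt + \Ce^-(\h_j + M\muj)$ with $\pjt \in H^1_0(\bE)$ and to test with the kernel part, so that the multiplier terms vanish identically; note that your telescoping bound for $\muj$ is independent of the energy estimate, so it can be derived first and then used to control the lift. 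Second, your ``secondary technical point'' about the initial datum is a genuine and sharp observation, but it concerns the lemma's hypotheses rather than your argument specifically: under the conditions of Lemma~\ref{lem_p0m0} one only has $p(0) \in L^2(\E)$, whereas the discrete hidden constraint \eqref{eqn_disc_p0_reg_d} at $j=1$ already requires $\Ce p_{0,0} = \Ce p(0)$ to be defined, and your telescoping argument for $\muotC \to 0$ additionally needs $\Ce p(0) = \h(0)$. The paper silently delegates this issue to the assumptions of \cite[Th.~4.3]{AltZ18}; in a self-contained proof you would have to either assume a consistent $H^1(\bE)$-regular initial value or show that the resulting initial layer is harmless in the stated $L^2(0,T;\cdot)$ and $L^\infty(0,T;\R^{\Nin})$ norms.
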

\begin{proof}
By inserting equation~\eqref{eqn_op_B_noEps_index1_b} into~\eqref{eqn_op_B_noEps_index1_a} and analogously equation~\eqref{eqn_disc_p0_reg_b} into~\eqref{eqn_disc_p0_reg_a}, the statements follow immediately by \cite[Lem.~4.2 \& Th.~4.3]{AltZ18}.
\end{proof}
Lemma~\ref{lem_disc_qualti} provides a qualitative statement about the convergence of the piecewise constant and linear approximations. For a quantitative result we need additional regularity as for Lemma~\ref{lem_weak_op_B_noeps_reg}. Under these assumptions, one can use function evaluations to define the right-hand sides in~\eqref{eqn_disc_p0_reg} which then automatically satisfy~\eqref{ass_disc_righthand_side}.
\begin{lemma}\label{lem_disc_parabolic_estimate}
Let the assumptions from Lemma~\ref{lem_weak_op_B_noeps_reg} be satisfied and suppose that the right-hand sides of~\eqref{eqn_disc_p0_reg} are defined via function evaluations at time~$t_j$. With a generic constant $C_\text{data}$ only depending on the data, it then holds that
\begin{align*}
  \|\potC  - p_0\|_{L^2(0,T;H^1(\bE))} &\leq \tau\, C_\text{data}, 
  & \|\potL - p_0\|_{C(0,T;L^2(\E))} &\leq \tau\, C_\text{data},\\
  \|\motC - m_0  \|_{L^2(0,T;L^2(\E))}&\leq \tau\, C_\text{data}, 
  & \|\muotC\|_{L^\infty(0,T;\R^{\Nin})} &\leq \tau\, C_\text{data}.
\end{align*}
\end{lemma}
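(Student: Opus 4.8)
The plan is to eliminate the flow variable, thereby reducing both the continuous regularized system~\eqref{eqn_op_B_noEps_index1} and its time discretization~\eqref{eqn_disc_p0_reg} to a regularized parabolic saddle-point problem to which the convergence theory of~\cite{AltZ18} applies directly. First I would insert the algebraic relation~\eqref{eqn_op_B_noEps_index1_b}, i.e.\ $m_0 = d^{-1}(\f - \calK p_0)$, into~\eqref{eqn_op_B_noEps_index1_a}. Using $\wlap = \calK^\ast d^{-1}\calK$ from Lemma~\ref{lem_wlap}, the first equation becomes
\begin{equation*}
  \dot p_0 + (\wlap + a)\, p_0 + \Ce^\ast(\lambda_0 + \mu_0) = \g - \Ci^\ast\r + \calK^\ast d^{-1}\f \qquad\text{in } [H^1(\bE)]^\ast,
\end{equation*}
which, together with $\Ce p_0 - M\mu_0 = \h$ and $\Ce\dot p_0 = \dot\h$, is exactly the regularized parabolic PDAE treated in~\cite{AltZ18}. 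Performing the analogous elimination of~\eqref{eqn_disc_p0_reg_b} in~\eqref{eqn_disc_p0_reg_a} produces the corresponding implicit Euler discretization of this reduced system.

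Next I would verify the structural hypotheses required by the quantitative error analysis in~\cite{AltZ18}. The operator $\wlap + a$ is linear, continuous, and elliptic on $H^1_0(\bE)$ by Lemma~\ref{lem_wlap} (with $a \ge 0$), the constraint operator $\Ce$ is inf-sup stable by Lemma~\ref{lem_infsup}, and $M$ is invertible by assumption. The temporal regularity needed for first-order convergence of the implicit Euler scheme is precisely the one furnished by Lemma~\ref{lem_weak_op_B_noeps_reg}, namely $p_0 \in H^1(0,T;H^1(\bE)) \cap C^1([0,T],L^2(\E))$ and $m_0 \in H^1(0,T;L^2(\E))$, together with the $H^1$- (resp.\ $H^2$-) in-time regularity of the data. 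Since the right-hand sides in~\eqref{eqn_disc_p0_reg} are defined by point evaluation at $t_j$, the piecewise-constant interpolants converge at order $\tau$ in the norms of~\eqref{ass_disc_righthand_side}; for the $L^2$-datum this is the elementary bound $\|\f - \f_\tau\|_{L^2(0,T;L^2(\E))} \le \tau\,\|\dot\f\|_{L^2(0,T;L^2(\E))}$, and the remaining estimates follow analogously. The quantitative version of~\cite[Th.~4.3]{AltZ18} then yields the asserted order-$\tau$ bounds for $\potC$, $\potL$, and $\muotC$, recalling that $\mu_0 \equiv 0$ holds for the exact solution.

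It remains to transfer the bound to the flow variable. Subtracting~\eqref{eqn_disc_p0_reg_b} from~\eqref{eqn_op_B_noEps_index1_b} gives
\begin{equation*}
  m_0 - \motC = d^{-1}(\f - \f_\tau) - d^{-1}\calK\,(p_0 - \potC),
\end{equation*}
so that, using the uniform lower bound on $d$ and the continuity of $\calK \colon H^1(\bE) \to [L^2(\E)]^\ast$,
\begin{equation*}
  \|m_0 - \motC\|_{L^2(0,T;L^2(\E))} \lesssim \|\f - \f_\tau\|_{L^2(0,T;L^2(\E))} + \|p_0 - \potC\|_{L^2(0,T;H^1(\bE))}.
\end{equation*}
Both terms are of order $\tau$, the first by the data-interpolation estimate and the second by the bound for $\potC$ obtained above, which closes the argument.

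The step I expect to be the main obstacle is confirming that the reduced regularized system genuinely meets the abstract assumptions of~\cite{AltZ18}, and in particular that the regularity supplied by Lemma~\ref{lem_weak_op_B_noeps_reg} is exactly what the first-order convergence result there requires; once this matching is in place, the elimination of $m$ and the recovery of the flow estimate are routine.
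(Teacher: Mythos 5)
Your opening reduction---inserting $m_0 = d^{-1}(\f - \calK p_0)$ into \eqref{eqn_op_B_noEps_index1_a} to obtain a parabolic PDAE for $p_0$ governed by $\wlap + a$, with the analogous elimination in the discrete system---is exactly the paper's first step, and your recovery of the flow error from the difference of \eqref{eqn_op_B_noEps_index1_b} and \eqref{eqn_disc_p0_reg_b} coincides with the paper's last step. The gap is everything in between: you assign the entire quantitative content of the lemma, namely the $O(\tau)$ bounds for $\potC$, $\potL$, and $\muotC$, to ``the quantitative version of \cite[Th.~4.3]{AltZ18}''. No such result is available to cite. Theorem~4.3 of \cite{AltZ18} is a qualitative convergence statement, and it is invoked in precisely that role in Lemma~\ref{lem_disc_qualti} of this paper (convergence as $\tau \to 0$, no rates); the purpose of Lemma~\ref{lem_disc_parabolic_estimate} is exactly to supply the rates that \cite{AltZ18} does not provide. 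Accordingly, the paper's proof borrows from \cite{AltZ18} only structural facts (the splitting $H^1(\bE) = H^1_0(\bE)\oplus H^1_c$ and the right-inverse $\Ce^-$ with range $H^1_c$, cf.~Lemmas~2.4 and~2.5 there) and then proves the rates by hand: the error equation for the $H^1_0$-part is tested with $\tau(\pjt - \pot(t_j))$, the value $\pot(t_{j-1})$ is Taylor-expanded, and the resulting recursion is summed to give a pointwise-in-time $O(\tau)$ bound, cf.~\eqref{eqn_disc_pointwise}.

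Moreover, the multiplier estimate is not a byproduct of ``$\mu_0 \equiv 0$ for the exact solution''; it is a prerequisite for the pressure bounds. The discrete solution only satisfies $\Ce\pj = \h(t_j) + M\muj$ by \eqref{eqn_disc_p0_reg_c}, so the error $\pj - p_0(t_j)$ contains, besides the $H^1_0$-component $\pjt - \pot(t_j)$, a constraint-violating component determined by $M\muj$; without a bound on $\muj$ neither pressure estimate can be concluded. The paper controls it by telescoping the discrete hidden constraint \eqref{eqn_disc_p0_reg_d}, which yields $M\muj = \sum_{k=1}^{j} \big(\tau\dot\h(t_k) - \h(t_k) + \h(t_{k-1})\big)$, and applying Taylor's theorem; this is where the $H^2$-regularity of $\h$ enters and what produces the stated $L^\infty$ bound. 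Finally, the two reconstructions require separate arguments that you do not address: comparing $\pot(t)$ with $\pot(t_k)$ on each subinterval for the piecewise-constant error in $L^2(0,T;H^1(\bE))$, and a further Taylor computation for the piecewise-linear error in $C(0,T;L^2(\E))$, which is not simply nodal interpolation since $\potL$ interpolates the perturbed values $\pj$ whose constraint parts are driven by $\dot\h(t_j)$ rather than by $\h$ itself. Your own closing caveat---whether the regularity of Lemma~\ref{lem_weak_op_B_noeps_reg} matches what a first-order result in \cite{AltZ18} would require---is therefore not a detail to be verified but the precise point at which the proposal has no argument.
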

\begin{proof}
Recall the definition of~$\pot$ introduced in the proof of Lemma~\ref{lem_weak_op_B_noeps_reg}. Similar to~\cite[Lem.~2.4]{AltZ18}, we consider the decomposition $H^1(\bE)=H^1_0(\bE) \oplus H^1_c$ with  
\[ 
  H^1_c := \{ p \in H^1(\bE)\,|\, \langle(\wlap + a) p,q\rangle =0 \text{ for all } q \in H^1_0(\bE)\},
\]
including the operator $\wlap$ from Lemma~\ref{lem_wlap}. By the discrete constraints~\eqref{eqn_disc_p0_reg_c} and~\eqref{eqn_disc_p0_reg_d} we decompose $\pj = \pjt + \Ce^-(\h_j - M \muj)$ and $\Dt\pj = \Dt\pjt + \Ce^- \dot \h_j$, where $\Ce^-$ denotes the right-inverse of $\Ce$ with range $H^1_c$, cf.~\cite[Lem.~2.5]{AltZ18}. Then, inserting equation~\eqref{eqn_op_B_noEps_index1_b} into~\eqref{eqn_op_B_noEps_index1_a} as well as~\eqref{eqn_disc_p0_reg_b} into~\eqref{eqn_disc_p0_reg_a}, we obtain 
\begin{align*}
  \Dt\pjt + (\wlap + a)\, \pjt 
  &= \g(t_j) - \Ci^\ast \r(t_j) + \calK^\ast d^{-1} f(t_j) - \Ce^- \dot \h(t_j) \\
  &= \dot{\pt}(t_j) + (\wlap + a)\, \pt(t_j) \qquad \text{in } H^{-1}(\bE).
\end{align*}
We now insert~$\tau\,(\pjt - \pot(t_j))\in H^1_0(\bE)$ as a test function and consider a Taylor expansion of~$\pot(t_{j-1})$. This then leads to the estimate 
\begin{equation}
\label{eqn_disc_diff}
\begin{aligned}
  &\| \pjt - \pot(t_j)\|^2 + \tau \cLap\, \| \pjt - \pot(t_j)\|_{H^1(\bE)}^2\\
  \leq\ & \| \pjmt - \pot(t_{j-1})\|^2 + \frac{1}{\tau \cLap} \big\| \int_{t_{j-1}}^{t_j} (s-t_{j-1})\, \ddot{\pot}(s) \ds\, \big\|^2_{H^{-1}(\bE)}\\
  \leq\ & \| \pjmt - \pot(t_{j-1})\|^2 + \frac{\tau^2}{3\cLap}  \int_{t_{j-1}}^{t_j} \|\ddot{\pot}(s)\|^2_{H^{-1}(\bE)} \ds.
\end{aligned}
\end{equation}
Since $\mu_{0,0}=0$, we have $\pt_{0,0} - \pot(0)=0$ and therefore by summing up all the estimates~\eqref{eqn_disc_diff} from $k=1,\ldots,j$, it follows that 
\begin{equation}\label{eqn_disc_pointwise}
  \| \pjt - \pot(t_j)\|^2 + \cLap \sum_{k=1}^j \tau\, \| \pkt - \pot(t_k)\|_{H^1(\bE)}^2 
  \leq \frac{\tau^2}{3\cLap}  \int_{0}^{t_j} \|\ddot{\pot}(s)\|^2_{H^{-1}(\bE)} \ds.
\end{equation}
For the stated approximation orders we also need an estimate of the Lagrange multiplier~$\mu$. A multiple application of equation~\eqref{eqn_disc_p0_reg_d} and Taylor's theorem yield together  
\begin{align}
|M\muj|^2 
&=\Big|\sum_{k=1}^j \tau \dot \h(t_k) - [\h(t_k) - \h(t_{k-1})]  \Big|^2 \notag\\
&= \Big|\sum_{k=1}^j \int_{t_{k-1}}^{t_k} (s-t_{k-1}) \ddot \h(s) \ds \Big|^2 
\leq \frac{\tau^3}{3} j \int_{0}^{t_j} |\ddot \h(s)|^2 \ds \leq \tau^2 \frac{T}{3} \int_{0}^{T} |\ddot \h(s)|^2 \ds. \label{eqn_est_disc_mu}
\end{align}
Therefore, the approximation order for $\|\muotC\|^2_{L^\infty(0,T;\R^{\Nin})}= \max_{j=1,\ldots,n} |\muj|^2$ follows by the invertibility of~$M$. 
%
%
%
The estimate for the piecewise constant approximation of~$p_0$ follows by
\begin{align*}
\int_{t_{k-1}}^{t_k} \| \pot(t_k) - \pot(s)\|^2_{H^1(\bE)} \ds &= \int_{t_{k-1}}^{t_k} \big\| \int_{s}^{t_k} \dot{\pot}(\eta)\deta\, \big\|^2_{H^1(\bE)} \ds 
\leq \frac{\tau^2}{2} \int_{t_{j-1}}^{t_j}  \|  \dot{\pot}(s) \|^2_{H^1(\bE)} \ds
\end{align*}
together with estimates~\eqref{eqn_disc_pointwise}-\eqref{eqn_est_disc_mu} and the triangle inequality. Recall that $\dot{\pot}(s)$, as well as its derivative, can be bounded in terms of the data. 
%
%
The convergence rate of~$\motC$ then follows if we consider the difference of the equations~\eqref{eqn_op_B_noEps_index1_b} and~\eqref{eqn_disc_p0_reg_b}. 

Finally, we discuss the error $\potL - p_0$. For this, note that a Taylor expansion implies for every $t\in [t_{j-1},t_j]$ that 
\begin{align*}
  &\phantom{2\,} | h(t_{j-1}) - \dot h(t_j) (t-t_{j-1}) - h(t) |^2\\
  \leq\ & 2\,| h(t_{j}) - \dot h(t_j) (t-t_{j}) - h(t) |^2 + 2\,| h(t_{j-1}) - h(t_j) - \dot h(t_j) (t_j-t_{j-1})  |^2\\
  =\ & 2\, \big| \int^{t_{j}}_t (s-t)\, \ddot h (s) \ds \big|^2 + 2\, \big| \int^{t_{j}}_{t_{j-1}} (s-t_{j-1})\, \ddot h (s) \ds\, \big|^2 \\
  \leq\ & \frac{4 \tau^3}{3} \int_{t_{j-1}}^{t_j} |\ddot h (s) |^2 \ds.
\end{align*}
Further, with $\chi_{[a,b]}$ denoting the indicator function for an interval~$[a,b]$ we have
\begin{align*}
  &\big\| \pot(t_{j-1}) + \frac{\pot(t_{j}) - \pot(t_{j-1})}{\tau}(t-t_{j-1}) - \pot(t) \big\|^2_{L^2(\E)} \\
  =\, & \Big\langle \int_{t_{j-1}}^{t_j} \Big( \tfrac{t-t_{j-1}}{\tau}(t_j-s) - \chi_{[t_{j-1},t]}(t-s)\Big) \ddot \pot(s)\ds , \int_{t_{j-1}}^{t_j} \Big(\tfrac{t-t_{j-1}}{\tau} - \chi_{[t_{j-1},t]} \Big) \dot \pot(s)\ds\Big\rangle\\
  \leq\, & \Big(\frac{\tau^{3}}{48} \int_{t_{j-1}}^{t_j}\|\ddot \pot(s)\|^2_{H^{-1}(\bE)} \ds \Big)^{1/2} \cdot \Big(\frac{\tau}{4} \int_{t_{j-1}}^{t_j} \|\dot \pot(s)\|^2_{H^{1}_0(\bE)}\ds\Big)^{1/2}\\
  \leq\, &\frac{\tau^2}{\sqrt{768}} \int_{t_{j-1}}^{t_j}\|\ddot \pot(s)\|^2_{H^{-1}(\bE)} + \|\dot \pot(s)\|^2_{H^{1}_0(\bE)}\ds.
\end{align*}
A combination of the latter two estimates with ~\eqref{eqn_disc_pointwise},~\eqref{eqn_est_disc_mu}, and the triangle inequality then leads to the stated estimate for~$\potL- p_0$.
\end{proof}
\begin{theorem}\label{thm_euler_p0m0}
Suppose a uniform partition in time with step size~$\tau$, $m(0) = m_0(0)$, and that the assumptions of Theorem~\ref{thm_p0m0} and Lemma~\ref{lem_disc_parabolic_estimate} are satisfied. 
Then, the piecewise constant functions~$\potC$,~$\motC$ and the piecewise linear function~$\potL$ defined through the implicit Euler scheme~\eqref{eqn_disc_p0_reg} serve as an approximation of the solution to~\eqref{eqn_op_B_eps} in the sense that 
\begin{align*}
  \Vert p - \potC \Vert_{L^2(0,T;H^1(\bE))}
  + \Vert m - \motC \Vert_{L^2(0,T;L^2(\E))}
  + \Vert p - \potL \Vert_{C(0,T;L^2(\E))}
  \le (\eps+\tau)\, C_\text{data}.
\end{align*}
Therein, $C_\text{data}$ denotes once more a constant only depending on the data. 
\end{theorem}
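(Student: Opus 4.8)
The plan is to combine the two approximation estimates already at our disposal, namely the $\eps$-expansion error of Theorem~\ref{thm_p0m0} and the temporal discretization error of Lemma~\ref{lem_disc_parabolic_estimate}, by a triangle inequality in each of the three norms appearing in the claim. For the potential in the $L^2(0,T;H^1(\bE))$-norm I would write
\[
  \Vert p - \potC \Vert_{L^2(0,T;H^1(\bE))}
  \le \Vert p - p_0 \Vert_{L^2(0,T;H^1(\bE))}
  + \Vert p_0 - \potC \Vert_{L^2(0,T;H^1(\bE))},
\]
and proceed analogously for $\Vert m - \motC \Vert_{L^2(0,T;L^2(\E))}$ via the intermediate function $m_0$ and for $\Vert p - \potL \Vert_{C(0,T;L^2(\E))}$ via $p_0$. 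In each splitting the second summand is the pure time-discretization error of the parabolic limit solution, which is bounded by $\tau\, C_\text{data}$ directly through Lemma~\ref{lem_disc_parabolic_estimate} (whose hypotheses, namely those of Lemma~\ref{lem_weak_op_B_noeps_reg}, are part of the present assumptions). It then remains to control the first summand, the $\eps$-error, in all three norms.

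For the $\eps$-error I would invoke Theorem~\ref{thm_p0m0}, whose assumptions are also included in the present statement. The crucial ingredient is the extra hypothesis $m(0)=m_0(0)$, equivalently $\calK p(0) - f(0) + d m(0)=0$: it makes the leading term on the right-hand side of Theorem~\ref{thm_p0m0} vanish, so that the estimate there collapses to
\[
  \Vert (p-p_0)(t)\Vert^2
  + \cLap \int_0^t \Vert (p-p_0)(s)\Vert^2_{H^1(\bE)}\ds
  + \dm \int_0^t \Vert (m-m_0)(s)\Vert^2\ds
  \le \eps^2\, C_\text{data}
\]
uniformly for $t\le T$. Taking square roots and reading off the three relevant contributions yields $\Vert p - p_0\Vert_{L^2(0,T;H^1(\bE))} \lesssim \eps$ and $\Vert m - m_0\Vert_{L^2(0,T;L^2(\E))} \lesssim \eps$ from the two integral terms, and, passing to the supremum over $t$ in the pointwise term, $\Vert p - p_0\Vert_{C(0,T;L^2(\E))} \lesssim \eps$.

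Inserting these three $\eps$-bounds together with the three $\tau$-bounds from Lemma~\ref{lem_disc_parabolic_estimate} into the corresponding triangle inequalities gives each summand of the claim a bound of the form $(\eps+\tau)\, C_\text{data}$, and adding them proves the assertion, with the various constants absorbed into a single generic $C_\text{data}$ depending only on the data. The argument is thus essentially bookkeeping; the one genuine point to verify, rather than a deep obstacle, is the role of the consistency condition $m(0)=m_0(0)$. Without it, Theorem~\ref{thm_p0m0} only delivers an $O(\sqrt{\eps})$ bound for $p-p_0$ in the $C(0,T;L^2(\E))$- and $L^2(0,T;H^1(\bE))$-norms, which would spoil the clean $(\eps+\tau)$ rate; checking that this term is switched off is the heart of the matter.
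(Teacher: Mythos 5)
Your proposal is correct and follows essentially the same route as the paper: a triangle inequality through the intermediate solution $(p_0,m_0)$, with the time-discretization part handled by Lemma~\ref{lem_disc_parabolic_estimate} and the $\eps$-part by Theorem~\ref{thm_p0m0}, where the hypothesis $m(0)=m_0(0)$ (equivalently $\calK p(0)-\f(0)+d\,m(0)=0$) annihilates the $\O(\sqrt{\eps})$ term. Your write-up is in fact more explicit than the paper's proof, which records only the $\potL$ case and notes the others follow analogously.
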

\begin{proof}
The result follows directly by the combination of Theorem~\ref{thm_p0m0} with Lemma~\ref{lem_disc_parabolic_estimate}. For this, we apply the triangle inequality and obtain
\[
  \Vert (p - \potL)(t) \Vert
  \le \Vert (p - p_{0})(t) \Vert + \Vert (p_0 - \potL)(t) \Vert
  \le (\eps+\tau)\, C_\text{data}.
\]
The estimates for the piecewise constant approximations follow in the same way. 
%
\end{proof}
%
%
\subsubsection{Runge-Kutta methods}\label{sect:timeInt:first:RK}
A Runge-Kutta scheme is based on a {\em Butcher tableau} 
\[ 
  \begin{array}{c|c} \fc & \fA \\ \hline & \fb^T \end{array} 
\]
with~$\fb,\fc\in \R^s$ and~$\fA\in \R^{s,s}$. As in~\cite{AltZ18} we assume that the resulting method is algebraically stable, that~$\fA$ is invertible, and $\fb^T \fA^{-1} \ones =1$ with~$\ones= [1, \ldots, 1]^T \in \R^s$. The temporal discretization of~\eqref{eqn_op_B_noEps_index1} then leads to the approximations 
\begin{equation}
\label{eqn_disc_p0_reg_RK_step}
  \pj = \fb^T \fA^{-1} \psj, \ \
  \mj = \fb^T \fA^{-1} \msj, \ \
  \lj = \fb^T \fA^{-1} \lsj, \ \
  \muj = \fb^T \fA^{-1} \musj,
\end{equation}
where $\psj \in H^1(\bE)^s$, $\msj \in L^2(\E)^s $, and $\lsj, \musj \in \R^{s \Nin}$ solve the stationary operator equations
\begin{subequations}
\label{eqn_disc_p0_reg_RK}
\begin{alignat}{6}
  \fA^{-1}\Dt \psj &\,+\, & a\psj &\,-\, & \calK^* \msj &\,+\, & \Ce^*\lsj &\,+\, & \Ce^\ast \musj &= \gs_j-\Ci^*\rs_j  &&\qquad\text{in }[H^1(\bE)^s]^*, \label{eqn_disc_p0_reg_RK_a}\\
  & & \calK \psj &\,+\, & d \msj & & & & &= \fs_j  &&\qquad\text{in }[L^2(\E)^s ]^*, \label{eqn_disc_p0_reg_RK_b}\\
  & & \Ce \psj & & & & &\,-\, & M \musj &= \hs_j &&\qquad\text{in }\R^{s \Nin}, \label{eqn_disc_p0_reg_RK_c}\\
  \Ce \fA^{-1}\Dt \psj & & & & & & & & &= \dot{\hs}_j &&\qquad\text{in }\R^{s \Nin}. \label{eqn_disc_p0_reg_RK_d}
\end{alignat}
\end{subequations}
Therein, the discrete derivative is given by~$\Dt \psj := (\psj - \pjm \ones)/\tau$ and the operators act componentwise. As in Lemma~\ref{lem_disc_qualti} one proves the existence of a solution of the iterative scheme~\eqref{eqn_disc_p0_reg_RK} by inserting~\eqref{eqn_disc_p0_reg_RK_b} into~\eqref{eqn_disc_p0_reg_RK_a} and using~\cite[Lem.~5.6]{AltZ18}, where the initial value is given by~$p_{0,0}=p_0(0)=p(0)$.

For the convergence analysis we assume that the right-hand sides of~\eqref{eqn_op_B_noEps_index1} are continuous and that the components of the right-hand sides of~\eqref{eqn_disc_p0_reg_RK} are given via function evaluations at time points $t_{j-1}+\tau c_i$, $i=1, \dots, s$. For~$\gs_j$ this means exemplary
\[ 
  \gs_j = \Big[ \g(t_{j-1} + \tau \fc_1),\ \dots,\  
  \g(t_{j-1} + \tau \fc_s) \Big]^T.
\]
Furthermore, we suppose that the Runge-Kutta method is of order~$\pOrd$ with stage order~$\qOrd$ which means 
\begin{equation}
\label{eqn_RK_orders}
  \sum_{j=1}^s \fb_j \fc_j^{k-1} = \frac{1}{k}, \quad k=1,\ldots,\pOrd \quad \text{ and }\quad
  \sum_{j=1}^s \fA_{ij} \fc_j^{k-1} = \frac{\fc_i^k}{k}, \quad k=1,\ldots,\qOrd
\end{equation}
with $i=1,\ldots,s$ and~$\pOrd\geq \qOrd+1$. We decompose~$\pj$ as in the proof of Lemma~\ref{lem_disc_parabolic_estimate} into~$\pjt\in H^1_0(\bE)$ and its remainder in~$H^1_{\text{c}}$. Note that we have~$\pjt = \fb^T \fA^{-1} \psjt$ with
\[ 
  \fA^{-1}\Dt \psjt +(\wlap + a) \psjt 
  = \gs_j-\Ci^*\rs_j + \calK^\ast d^{-1} \fs_j - \Ce^- \dot{\hs}_j\qquad 
  \text{in }[H^1_0(\bE)^s]^*. 
\]
A corresponding convergence analysis for this system can be found in~\cite[Ch.~1]{LubO95}. For the part in~$H^1_{\text{c}}$, we obtain by~$\Ce p_{0,0} = \h(0)$, a successive application of equation~\eqref{eqn_disc_p0_reg_RK_d}, and several Taylor expansions the formal expression
\begin{align*}
  \Ce \pj
  &\overset{\hphantom{\eqref{eqn_RK_orders}}}{=} \fb^T \fA^{-1} \Ce \psj
  = \Ce p_{0,j-1} + \tau \fb^T \dot{\hs}_j\\
  &\overset{\hphantom{\eqref{eqn_RK_orders}}}{=} \Ce p_{0}(0) + \tau \sum_{k=1}^j \sum_{i=1}^s \fb_i \dot{\h}(t_{k-1} + \tau \fc_i) \\
  &\overset{\hphantom{\eqref{eqn_RK_orders}}}{=} \h(0) + \tau \sum_{k=1}^j \sum_{i=1}^s \fb_i \sum_{\ell =0}^{\pOrd-1} \frac{(\tau \fc_i)^\ell}{\ell!} \h^{(\ell + 1)}(t_{k-1}) +\sum_{k=1}^j R_k\\
 &\overset{\eqref{eqn_RK_orders}}{=} - \sum_{k=1}^{j-1} \h(t_{k}) + \sum_{k=1}^j \sum_{\ell =0}^{\pOrd} \frac{\tau^\ell}{\ell!} \h^{(\ell)}(t_{k-1}) + \sum_{k=1}^j R_k
  = h(t_j) + \sum_{k=1}^j \big( R_k - \widetilde{R}_k \big)
\end{align*}
with the correction terms $R_k$ and $\widetilde{R}_k$, $k=1,\ldots,j$, given by
\[ 
  R_{k} 
  = \tau \sum_{i=1}^s \fb_i \int_{t_{k-1}}^{t_{k-1} + \tau \fc_i} \frac{(t_{k-1}+\tau \fc_i -s)^{\pOrd-1}}{(\pOrd-1)!}h^{(\pOrd+1)}(s) \ds, \  \widetilde{R}_k 
  = \int_{t_{k-1}}^{t_{k}} \frac{(t_k - s)^{\pOrd}}{\pOrd!}h^{(\pOrd+1)}(s) \ds.
\]
Note that $0\leq \fc_i \leq 1$ for all $i=1,\ldots,s$ implies the estimate 
\begin{equation}
\label{eqn_est_correction_terms_RK}
  \Big| \sum_{k=1}^j \big(R_k - \widetilde{R}_k\big) \Big|^2 
  \leq j\, \sum_{k=1}^j \big| R_k - \widetilde{R}_k \big|^2
  \lesssim t_j\, \tau^{2 \pOrd} \int_0^{t_j} | h^{(\pOrd+1)}(s)|^2 \ds. 
\end{equation}
We emphasize that the included constant only depends on~$\fb$, $\fc$, and~$\pOrd$. Finally, assuming sufficient regularity of the data and the solution, we obtain with~\cite[Th.~1.1]{LubO95} and~\eqref{eqn_est_correction_terms_RK} the error estimate 
\begin{equation}
\label{eqn_est_RK}
\begin{split}
&\| \pj - p_0(t_j)\|^2 + \sum_{k=1}^j \tau\, \| p_{0,k} - p_0(t_k)\|_{H^1(\bE)}^2\\
  & \qquad\qquad \lesssim \big(\tau^{\qOrd+1} \big)^2 \int_{0}^{t_j} \|\pot^{\, (\qOrd+1)}\|_{H^{1}(\bE)}^2 + \|\pot^{\, (\qOrd+2)}\|_{H^{-1}(\bE)}^2 + | h^{(\qOrd+2)}|^2 \ds.
\end{split}
\end{equation}
Pointwise estimates for~$m$ and $\mu$ follow by~\eqref{eqn_disc_p0_reg_RK_b}, \eqref{eqn_disc_p0_reg_RK_c}, and~\eqref{eqn_est_correction_terms_RK}. 

To construct time-continuous approximations of $p$, $m$, and $\mu$, we use~$\pj$, $\mj$, and~$\muj$ as interpolation data. Let~$\potPol{0}{\nOrd}{,\tau}$ denote the continuous function, whose restriction to the interval~$(t_{j-1},t_{j})$ equals the interpolation polynomial of degree~$\nOrd$ in the~$\nOrd+1$ points~$(t_{k},p_{0,k})$ for~$k= j-(\nOrd+1)/2, \dots, j+(\nOrd-1)/2$ if~$\nOrd$ is odd and $k= j-1-\nOrd/2, \dots, j-1+\nOrd/2$ otherwise. Hence, we consider~$\nOrd+1$ interpolation points preferably near the interval~$(t_{j-1},t_{j})$, cf.~the illustration in Figure~\ref{fig_interpolation}. If~$j$ is either too small or too large such that the construction is not possible, then we consider the interpolation in the first (respectively last)~$\nOrd+1$ time points. 
%
\begin{figure}
\centering
\begin{tikzpicture}[scale=0.90]
	\draw[thick] (0.2, 0) -- (3.5, 0);
	\draw[thick] (4.5, 0) -- (8.5, 0);
	\draw[thick, ->] (9.5, 0) -- (13.3, 0);
	\node at (13.7, 0) {$t$};
	\foreach \x in {1,2,3,5,6,7,8,10,11,12} {
		\draw[thick] (\x, 0.1) -- (\x, -0.1);
	}
	\node at (2, -0.37) {$t_{j-1-\nOrd/2}$};
	\node at (6, -0.37) {$t_{j-1}$};
	\node at (7, -0.37) {$t_j$};	
	\node at (8, -0.37) {$t_{j+1}$};
	\node at (11, -0.37) {$t_{j-1+\nOrd/2}$};	
	\draw[opacity=.3,domain=0.3:13,smooth,variable=\x,myBlue2,very thick, dashed] plot ({\x},{sin(\x*62)-cos(\x*32)-0.4+exp(-0.4*\x)+exp(-abs(\x/2-5))}); 
	\draw[domain=6:7,smooth,variable=\x,myBlue2,very thick] plot ({\x},{sin(\x*62)-cos(\x*32)-0.4+exp(-0.4*\x)+exp(-abs(\x/2-5))});	   
    \fill[opacity=.3, myBlue2] (2, 0.45) circle(2.5pt);
    \draw[myBlue2] (2, 0.45) circle(2.5pt);
    \fill[opacity=.3, myBlue2] (3, -0.08) circle(2.5pt);
	\draw[myBlue2] (3, -0.08) circle(2.5pt);
    \fill[opacity=.3, myBlue2] (5, 0) circle(2.5pt);
	\draw[myBlue2] (5, 0) circle(2.5pt);
    \fill[opacity=.3, myBlue2] (6, 1.0) circle(2.5pt);
	\draw[myBlue2] (6, 1.0) circle(2.5pt);
    \fill[opacity=.3, myBlue2] (7, 1.57) circle(2.5pt);
	\draw[myBlue2] (7, 1.57) circle(2.5pt);
    \fill[opacity=.3, myBlue2] (8, 0.94) circle(2.5pt);
	\draw[myBlue2] (8, 0.94) circle(2.5pt);
	\fill[opacity=.3, myBlue2] (10, -1.2) circle(2.5pt);
	\draw[myBlue2] (10, -1.2) circle(2.5pt);
	\fill[opacity=.3, myBlue2] (11, -1.38) circle(2.5pt);
	\draw[myBlue2] (11, -1.38) circle(2.5pt);
\end{tikzpicture} 
\caption{Illustration of the function~$\potPol{0}{\nOrd}{,\tau}$ and its construction on the interval~$(t_{j-1},t_{j})$ for even~$\nOrd$. This part is defined via the interpolation polynomial of degree~$\nOrd$ in the~$\nOrd+1$ time points~$t_{j-1-\nOrd/2}, \dots, t_{j-1+\nOrd/2}$.}
\label{fig_interpolation}
\end{figure}
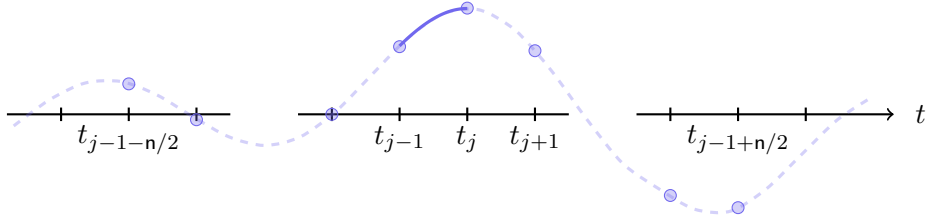
\begin{theorem}\label{thm_RK_p0m0}
Suppose a uniform partition in time with step size~$\tau$ and $m(0) = m_0(0)$. Assume that the data and~$p_0$ are sufficiently smooth. We consider a Runge-Kutta scheme of order~$\pOrd$ and stage order~$\qOrd$, $\pOrd \geq \qOrd +1$, which is algebraically stable, has an invertible coefficient matrix~$\fA$, and satisfies~$\fb^T \fA^{-1} \ones =1$. Then, the piecewise polynomial functions~$\potPol{0}{\qOrd}{,\tau}$ and~$\motPol{0}{\qOrd}{,\tau}$, defined through interpolation data taken from~\eqref{eqn_disc_p0_reg_RK_step}, 
approximate the solution to~\eqref{eqn_op_B_eps} in the sense that 
\begin{align*}
  \Vert p - \potPol{0}{\qOrd}{,\tau} \Vert_{L^2(0,T;H^1(\bE)) \cap C(0,T;L^2(\E))}
  + \Vert m - \motPol{0}{\qOrd}{,\tau} \Vert_{L^2(0,T;L^2(\E))}
  \le (\eps+\tau^{\qOrd+1})\, C_\text{data}.
\end{align*}
Therein, $C_\text{data}$ denotes once more a constant only depending on the data. 
\end{theorem}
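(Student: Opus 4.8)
The plan is to mimic the proof of Theorem~\ref{thm_euler_p0m0}, combining the $\eps$-expansion estimate of Theorem~\ref{thm_p0m0} with a purely temporal Runge--Kutta bound of order $\tau^{\qOrd+1}$ for the parabolic limit problem~\eqref{eqn_op_B_noEps}, the latter being essentially the content of the nodal estimate~\eqref{eqn_est_RK} transferred to the continuous interpolants. Concretely, for each norm in the claim I would use the triangle inequality
\[
  \Vert p - \potPol{0}{\qOrd}{,\tau} \Vert \le \Vert p - p_0 \Vert + \Vert p_0 - \potPol{0}{\qOrd}{,\tau} \Vert,
\]
and analogously for $m$, so that the first summand carries the $\eps$-error and the second the discretization error $\sim\tau^{\qOrd+1}$.

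First I would dispose of the $\eps$-term. The assumptions of Theorem~\ref{thm_p0m0} are in force, and the hypothesis $m(0)=m_0(0)$ together with~\eqref{eqn_mass_flow_noEps} evaluated at $t=0$ gives $\calK p(0) - f(0) + d\,m(0) = 0$. Hence the first-order contribution in the estimate of Theorem~\ref{thm_p0m0} vanishes and one obtains
\[
  \Vert p - p_0 \Vert_{C(0,T;L^2(\E))} + \Vert p - p_0 \Vert_{L^2(0,T;H^1(\bE))} + \Vert m - m_0 \Vert_{L^2(0,T;L^2(\E))} \lesssim \eps\, C_\text{data},
\]
which already accounts for the $\eps$ part of the asserted bound (cf.~Remark~\ref{rem_firstorderineps}).

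Next I would establish the discretization bound $\Vert p_0 - \potPol{0}{\qOrd}{,\tau} \Vert + \Vert m_0 - \motPol{0}{\qOrd}{,\tau} \Vert \lesssim \tau^{\qOrd+1}\, C_\text{data}$. The nodal estimate~\eqref{eqn_est_RK} already yields $\Vert p_{0,j}-p_0(t_j)\Vert \lesssim \tau^{\qOrd+1}$ in $L^2(\E)$ pointwise and $\sum_{k}\tau\,\Vert p_{0,k}-p_0(t_k)\Vert_{H^1(\bE)}^2 \lesssim \tau^{2(\qOrd+1)}$, while the matching bounds for $m_{0,j}$ follow from the nodal relation $\calK p_{0,j}+d\,m_{0,j}=\f_j$ in~\eqref{eqn_disc_p0_reg_RK_b} and~\eqref{eqn_est_correction_terms_RK}. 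To reach the continuous interpolants I would insert the exact nodal interpolant $\Pi_\qOrd p_0$ and split, by linearity of $\Pi_\qOrd$,
\[
  p_0 - \potPol{0}{\qOrd}{,\tau} = \big(p_0 - \Pi_\qOrd p_0\big) + \Pi_\qOrd\big(p_0 - p_{0,\tau}\big).
\]
The first term is a genuine polynomial interpolation error and is of order $\tau^{\qOrd+1}$ in the relevant space–time norms by the assumed smoothness of $p_0$. The second term interpolates the nodal error sequence; since for fixed degree $\qOrd$ the Lagrange basis functions on the uniform grid have a bounded Lebesgue constant, its $C(0,T;L^2(\E))$ norm is controlled by $\max_j \Vert p_{0,j}-p_0(t_j)\Vert$ and its $L^2(0,T;H^1(\bE))$ norm by the discrete $\ell^2$-in-time sum of $\Vert p_{0,k}-p_0(t_k)\Vert_{H^1(\bE)}$, both $\lesssim \tau^{\qOrd+1}$. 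The bound for $m_0 - \motPol{0}{\qOrd}{,\tau}$ in $L^2(0,T;L^2(\E))$ follows the same pattern, using that the $m$-error is tied to the $H^1$-error of $p_0$ up to correction terms bounded by~\eqref{eqn_est_correction_terms_RK}. A final triangle inequality combining the $\eps$- and $\tau^{\qOrd+1}$-estimates then gives the claim with $C_\text{data}$ absorbing all data-dependent constants.

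The main obstacle I anticipate is precisely this last interpolation bookkeeping: one must verify that stage-order-$\qOrd$ interpolation through $\qOrd+1$ nodes reproduces the rate $\tau^{\qOrd+1}$ \emph{simultaneously} in the three norms $C(0,T;L^2(\E))$, $L^2(0,T;H^1(\bE))$, and $L^2(0,T;L^2(\E))$, and that the boundary modification of the stencil (using the first, respectively last, $\qOrd+1$ time points when the centered stencil is unavailable) only affects the constant and not the order. Once the uniform boundedness of the interpolation operator $\Pi_\qOrd$ on the uniform grid is recorded, the estimate is routine.
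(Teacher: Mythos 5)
Your proposal is correct and follows essentially the same route as the paper: the triangle inequality splitting the error into the $\eps$-part (killed to order $\eps$ by Theorem~\ref{thm_p0m0} since $m(0)=m_0(0)$ makes the consistency term vanish) and the time-discretization part, where your decomposition $p_0-\Pi_{\qOrd}p_0+\Pi_{\qOrd}(p_0-p_{0,\tau})$ is exactly the paper's insertion of the exact-data interpolant $\potPol{0}{\qOrd}{}$, with the interpolant difference controlled by Lagrange-basis (Lebesgue-constant) bounds times the nodal errors from~\eqref{eqn_est_RK}, and the genuine interpolation error controlled by smoothness of~$p_0$. The treatment of $m_0$ via the algebraic relation and the correction terms~\eqref{eqn_est_correction_terms_RK} likewise matches the paper's argument.
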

\begin{proof}
We only prove the estimate $\Vert p_0 - \potPol{0}{\qOrd}{,\tau} \Vert_{L^2(0,T;H^1(\bE)) \cap C(0,T;L^2(\E))} \leq \tau^{\qOrd+1} C_\text{data}$. The remainder is then similar to the proof of Theorem~\ref{thm_euler_p0m0}. 

Let $L_i$ denote the $i$th Lagrange polynomial on the interval $[0,1]$ corresponding to~$\qOrd +1$ equidistant interpolation nodes, $i=0,\ldots,\qOrd$. By a rescaling of these polynomials we can express~$\potPol{0}{\qOrd}{,\tau}$ in the form 
\[
  \potPol{0}{\qOrd}{,\tau}(t)|_{(t_{j-1},t_j)} 
  = \sum\limits_{i=0}^{\qOrd} L_i\Big(\tfrac{t-t_{k_j}}{\qOrd\, \tau} \Big) p_{0,k_j+i}
\]
for $j=1,\ldots,n$. Therein, $k_j$ denotes the integer such that~$t_{k_j}$ equals the first time point used within the construction of~$\potPol{0}{\qOrd}{,\tau}$ on the time interval~$(t_{j-1},t_j)$. Furthermore, let~$\potPol{0}{\qOrd}{}$ be defined as $\potPol{0}{\qOrd}{,\tau}$ but with interpolation data~$p_0(t_j)$, $j=1,\ldots,n$. Then, it holds that 
\begin{align*}
  \| \potPol{0}{\qOrd}{,\tau} - \potPol{0}{\qOrd}{}\|^2_{C(0,T;L^2(\E))}
  &= \adjustlimits\max_{j= 1,\ldots,n} \sup_{t\in (t_{j-1},t_j)} \big\| \sum_{i=0}^{\qOrd} L_i\Big(\tfrac{t-t_{k_j}}{\qOrd\, \tau} \Big) \big(p_{0,k_j+i}-p_0(t_{k_j+i})\big) \big\|^2_{L^2(\E)}\\
  &\leq (\qOrd + 1) \sum_{i=0}^{\qOrd} \| L_i \|^2_{C([0,1])} \max_{j= 1,\ldots,n} \| p_{0,j}-p_0(t_{j})\|^2_{L^2(\E)}.
\end{align*}
With the standard error bound from polynomial interpolation and the embedding of $W(0,T;H^1_0(\bE),H^{-1}(\bE))$ in $C(0,T;L^2(\E))$ we get 
\begin{align*}
  \| \potPol{0}{\qOrd}{}- p_0\|^2_{C(0,T;L^2(\E))} 
  &\leq \frac{(\tau^{\qOrd+1})^2 }{16 (\qOrd+1)^2} \|p_0^{(\qOrd+1)}\|^2_{C(0,T;L^2(\E))}\\
  &\lesssim \big(\tau^{\qOrd+1} \big)^2  \int_{0}^{T} \|p_0^{\, (\qOrd+1)}\|^2_{H^1(\bE)} + \|\pot^{\, (\qOrd+2)}\|_{H^{-1}(\bE)}^2 + |h^{(\qOrd+2)}|^2 \ds.
\end{align*}
For the estimate in $L^2(0,T;H^1(\bE))$ we note that 
\begin{align*}
  \int_0^T \| \potPol{0}{\qOrd}{,\tau} - \potPol{0}{\qOrd}{}\|^2_{H^1(\bE)} \ds 
  &\leq (\qOrd +1) \sum_{j=1}^n \sum_{i=0}^{\qOrd} \int_{t_{j-1}}^{t_j} L_i^2\Big(\tfrac{s-t_{k_j}}{\qOrd\, \tau} \Big) \ds\, \| p_{0,k_j+i}-p_0(t_{k_j+i})\|^2_{H^1(\bE)}\\
  &\leq (\qOrd +1)^3 \sum_{i=1}^{\qOrd} \int_0^1 L_i^2(s) \ds\, \sum_{j=1}^n \tau \| p_{0,j}-p_0(t_j)\|^2_{H^1(\bE)}
\end{align*}
and with a well-known finite element estimate, cf.~\cite[Ch.~II, Rem.~6.5]{Bra07}, we have
\[ 
  \int_0^T \| \potPol{0}{\qOrd}{} - p_0\|^2_{H^1(\bE)} \ds 
  \lesssim (\tau^{\qOrd+1})^2 \int_0^T \| p_0^{(\qOrd +1)} \|^2_{H^1(\bE)} \ds.
\]
The claimed bound for~$\potPol{0}{\qOrd}{,\tau} - p_0$ finally follows by the four inequalities above, the triangular inequality, and estimate~\eqref{eqn_est_RK}.
\end{proof}
%
%
\subsection{Second-order approximation}\label{sect:timeInt:second}
In this subsection we make use of the second-order approximation from Section~\ref{sect:expansion:second}, i.e., we derive a time discretization for the approximation of~$p$ and~$m$ based on~$\hat p$ and $\hat m$, respectively. For this, we consider the coupled system given by~\eqref{eqn_op_B_noEps} and~\eqref{eqn_op_B_approx_1}. Recall that the latter system includes~$\dot m_0$ as a right-hand side. This may cause computational issues, since~$m_0$ is an 'algebraic' variable in~\eqref{eqn_op_B_noEps}. For the corresponding finite-dimensional case it is well-known that this reduces the convergence order, cf.~\cite[Ch.~6~f.]{HaiW96}. 

To prevent such a loss of convergence, we replace~$\dot m_0$ in~\eqref{eqn_op_B_approx_1_b} by the derivative of~\eqref{eqn_op_B_noEps_b}, which is possible under the assumptions of Lemma~\ref{lem_weak_op_B_noeps_reg}. In place of~\eqref{eqn_op_B_approx_1_b} this leads to an equation independent of~$\dot m_0$. Similar to the discussion in Remark~\ref{rem:Dh_doth}, this achieves the inclusion of~$\dot \f$ rather then the discrete derivative of~$\f$. The latter would lead to a perturbed right-hand side and thus, to declined convergence properties~\cite{Alt15}.

The resulting coupled system includes two constraints. From~\eqref{eqn_op_B_noEps_c} we have~$\Ce p_0 = \h$, for which we apply the same regularization step as in the previous subseciton. Second, we have the condition~$\Ce p_1 = 0$  from~\eqref{eqn_op_B_approx_1_c}, which we incorporate into the solution space of~$p_1$. In summary, this leads to 
\begin{subequations}
	\label{eqn_op_B_coupled}
	\begin{align}
	\dot p_0 + a p_0 - \calK^* m_0  + \Ce^*\lambda_0 + \Ce^\ast \mu_0 &= \g-\Ci^*\r  &&\text{in }[H^1(\bE)]^*, \label{eqn_op_B_coupled_a}\\
	\dot p_1 + a p_1 - \calK^* m_1  &= 0  &&\text{in } H^{-1}(\bE), \label{eqn_op_B_coupled_b}\\
	- d^{-1} \calK \dot p_0 + \calK p_1 + d m_1 &= - d^{-1} \dot{\f}  &&\text{in }[L^2(\E)]^*, \label{eqn_op_B_coupled_c}\\	
	\calK p_0 + d m_0 &= \f  &&\text{in }[L^2(\E)]^*, \label{eqn_op_B_coupled_d}\\
	\Ce p_0  - M \mu_0 &= \h &&\text{in }R^{\Nin}, \label{eqn_op_B_coupled_e}\\
	\Ce \dot p_0 &= \dot{\h} &&\text{in }R^{\Nin} \label{eqn_op_B_coupled_f}
	\end{align}
\end{subequations}
with an artificial Lagrange multiplier~$\mu\colon [0,T] \to \R^{\Nin}$ and an arbitrary invertible matrix~$M \in \R^{\Nin \times \Nin}$. 
\subsubsection{Implicit Euler scheme}
We first concentrate on the discretization by the implicit Euler scheme. The discretized version of~\eqref{eqn_op_B_coupled} with step size $\tau$ then reads   
\begin{subequations}
\label{eqn_disc_p1_reg}
\begin{align}
	\Dt \pj + a \pj - \calK^* \mj + \Ce^*\lj + \Ce^\ast \muj &= \g_j-\Ci^*\r_j  &&\text{in }[H^1(\bE)]^*, \label{eqn_disc_p1_reg_a}\\
	\Dt \pj[1] + a \pj[1] - \calK^* \mj[1]  &= 0  &&\text{in } H^{-1}(\bE), \label{eqn_disc_p1_reg_b}\\
	- d^{-1} \calK (\Dt \pj) + \calK \pj[1] + d \mj[1] &= - d^{-1} \dot{\f}_j  &&\text{in }[L^2(\E)]^*, \label{eqn_disc_p1_reg_c}\\	
	\calK \pj + d \mj &= \f_j  &&\text{in }[L^2(\E)]^*, \label{eqn_disc_p1_reg_d}\\
	\Ce \pj  - M \muj &= \h_j &&\text{in }R^{\Nin}, \label{eqn_disc_p1_reg_e}\\
	\Ce \Dt \pj &= \dot{\h}_j &&\text{in }R^{\Nin}. \label{eqn_disc_p1_reg_f}
\end{align}
\end{subequations}
We emphasize that equations~\eqref{eqn_disc_p1_reg_a} and~\eqref{eqn_disc_p1_reg_d}-\eqref{eqn_disc_p1_reg_f} equal the Euler discretization of the regularized first-order system~\eqref{eqn_disc_p0_reg}. Thus, a decoupling of the system would imply the existence of~$\pj$, $\mj$, $\lj$, and $\muj$ in every time step, see~Lemma~\ref{lem_disc_qualti}. The assumption~$p_0(0)\in H^1(\bE)$ then implies~$\Dt \pj \in H^1(\bE)$ for $j=1,\ldots,n$. This, in turn, ensures that the term~$d^{-1} \calK (\Dt \pj)$ is well-defined in $[L^2(\E)]^*$ and thus, the unique solvability of~\eqref{eqn_disc_p1_reg} in each time-step, cf.~\cite[Th.~4.1]{EmmT10}. 

Now, let~$\potC[1]$, $\motC[1]$ denote the piecewise constant functions and~$\potL[1]$ the continuous and piecewise linear function defined via~$\pj[1]$ and~$\mj[1]$, respectively. 
\begin{lemma}\label{lem_disc_parabolic_estimate_coupled}
Suppose right-hand sides $\f \in H^2(0,T;L^2(\E))$, $\g \in H^2(0,T;H^{-1}(\bE))$ with~$\g \in L^2(0,T;[H^1(\bE)]^\ast)$, $\h \in H^3(0,T;\R^{\Nin})$, and~$\r \in H^2(0,T;\R^{\Nout})$. Further assume consistent initial data $p_0(0) \in H^1(\bE)$ with $\Ce p_0(0) = \h(0)$, the existence of a function~$\pstar\in H^1_0(\bE)$ satisfying~\eqref{def_pstar}, and~$\pstar^\prime\in L^2(\E)$ such that 
\[
  (\pstar^\prime, q) 
  = \big\langle \dot \g(0) -\Ci^\ast \dot \r(0) - (\wlap +a) (\pstar + \Ce^- \dot \h (0)) - \ddot \h (0) + \calK^\ast d^{-1}\dot \f(0), q \big\rangle 
\]
for all $q\in H^1_0(\bE)$. If the right-hand sides in~\eqref{eqn_disc_p1_reg} are defined through function evaluations, then there exists a constant~$C_\text{data}$ only depending on the data with  
\[
  \|\potC[1]  - p_1\|^2_{L^2(0,T;H^1(\bE))} 
  + \|\potL[1] - p_1\|^2_{C(0,T;L^2(\E))} 
  + \|\motC[1] - m_1  \|^2_{L^2(0,T;L^2(\E))} 
  \leq \tau^2\, C_\text{data}. 
\]
\end{lemma}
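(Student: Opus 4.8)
The plan is to reduce the $p_1$-block of~\eqref{eqn_disc_p1_reg} to a parabolic scheme and to reuse the energy technique behind Lemma~\ref{lem_disc_parabolic_estimate}, the only genuinely new ingredient being the discrete time derivative~$\Dt\pj$ appearing in~\eqref{eqn_disc_p1_reg_c}. Solvability of the iterates is already settled in the paragraph preceding the lemma, so only the error bound is at stake. First I would eliminate the fluxes: inserting~\eqref{eqn_op_B_coupled_d} into~\eqref{eqn_op_B_coupled_c} and the resulting expression for $m_1$ into~\eqref{eqn_op_B_coupled_b}, and using $p_1\in H^1_0(\bE)$ so that~\eqref{eqn_op_B_coupled_f} and the multipliers drop out, the exact $p_1$ solves
\[
  \dot p_1 + (\wlap+a)\,p_1 = \calK^\ast d^{-2}\big(\calK\dot p_0 - \dot\f\big)\quad\text{in } H^{-1}(\bE),\qquad p_1(0)=0,
\]
with $\wlap$ from Lemma~\ref{lem_wlap}. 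The same elimination on~\eqref{eqn_disc_p1_reg_b}--\eqref{eqn_disc_p1_reg_d} shows that $\pj[1]$ is the implicit Euler approximation of this equation, the only difference being that $\calK\dot p_0$ is replaced by $\calK\Dt\pj$ and $\dot\f$ by $\dot\f(t_j)$.

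Setting $e_j:=\pj[1]-p_1(t_j)$, subtracting the two parabolic relations and testing with $\tau e_j$ as in~\eqref{eqn_disc_diff}--\eqref{eqn_disc_pointwise} yields, with the ellipticity constant $\cLap$,
\[
  \|e_J\|^2 + \cLap\sum_{j=1}^J\tau\,\|e_j\|^2_{H^1(\bE)}
  \lesssim \sum_{j=1}^J\tau\,\|\rho_j\|^2_{H^{-1}(\bE)} + \sum_{j=1}^J\tau\,\langle\calK^\ast d^{-2}\calK\,\psi_j,\,e_j\rangle,
\]
where $\rho_j$ collects the standard Euler consistency of $p_1$ and $\psi_j:=\Dt\pj-\dot p_0(t_j)$ is the error of the discrete time derivative of the first-order solution. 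The first sum is of order $\tau^2$ once $p_1\in H^2(0,T;\,\cdot\,)$, which follows from the data regularity by Lemma~\ref{lem_p1m1} applied to the differentiated right-hand side. The decisive observation for the second sum is that, writing $\theta_j:=\pjt-\pot(t_j)$ for the first-order error in $H^1_0(\bE)$ and subtracting the continuous from the discrete parabolic equation of Lemma~\ref{lem_disc_parabolic_estimate} (whose forcings agree at $t_j$ under function evaluation), the $H^1_0(\bE)$-part of $\psi_j$ satisfies the exact identity $\psi_j=-(\wlap+a)\,\theta_j$, the complementary lift part being smooth. Hence $\psi_j$ is small in the \emph{dual} norm, $\sum_j\tau\,\|\psi_j\|_{H^{-1}(\bE)}^2\lesssim\sum_j\tau\,\|\theta_j\|^2_{H^1(\bE)}=\O(\tau^2)$ by~\eqref{eqn_disc_pointwise}, whereas a naive estimate by $\sum_j\tau\,\|\psi_j\|^2_{H^1(\bE)}$ would only be of order $\tau$, since the discrete time derivative of $\theta_j$ is of order one on the first subinterval.

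To convert this dual smallness into a bound of the correct order I would apply summation by parts in time to the critical sum. Splitting $\psi_j$ instead as $\Dt\theta_j+c_j$, with $c_j$ the time-consistency of the exact $\pot$ and $\theta_0=0$, Abel summation turns $\sum_j\tau\,\langle\calK^\ast d^{-2}\calK\,\Dt\theta_j,e_j\rangle$ into the boundary term $\langle\calK^\ast d^{-2}\calK\,\theta_J,e_J\rangle$ plus $-\sum_j\tau\,\langle\calK^\ast d^{-2}\calK\,\theta_j,\Dt e_{j+1}\rangle$. The $c_j$-part and the boundary term are controlled by the pointwise and $\tau$-weighted $H^1$-bounds on $\theta_j$ from~\eqref{eqn_disc_pointwise}; the remaining sum pairs $\theta_j$ against the discrete time derivative $\Dt e_{j+1}$ of the $p_1$-error, so it requires a companion estimate for $\sum_j\tau\,\|\Dt e_j\|^2_{H^1(\bE)}$ together with a uniform $H^1$-bound on $e_J$. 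This companion bound is obtained by differencing the $p_1$-scheme once more in time and rerunning the energy argument; it is exactly here that the stronger hypotheses $\f,\g\in H^2$, $\h\in H^3$, $\r\in H^2$ and the second compatibility function $\pstar^\prime\in L^2(\E)$ enter, as they guarantee consistent and sufficiently regular initial data for the once-differentiated first-order and $p_1$ systems. I expect this step — propagating the order-$\tau$ accuracy of the discrete derivative $\Dt\pj$ through the coupling without any order reduction — to be the main obstacle, and it is precisely the point where the framework of~\cite{AltZ18} must be extended.

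Once $\|e_J\|^2+\cLap\sum_j\tau\,\|e_j\|^2_{H^1(\bE)}\lesssim\tau^2\,C_\text{data}$ is established, the three stated norms follow as in Lemma~\ref{lem_disc_parabolic_estimate}. The $L^2(0,T;H^1(\bE))$-bound for $\potC[1]-p_1$ combines the $\tau$-weighted $H^1$-sum with the interpolation estimate $\int_{t_{j-1}}^{t_j}\|p_1(t_j)-p_1(s)\|^2_{H^1(\bE)}\,\ds\lesssim\tau^2\int_{t_{j-1}}^{t_j}\|\dot p_1\|^2_{H^1(\bE)}\,\ds$; the $C(0,T;L^2(\E))$-bound for $\potL[1]-p_1$ follows from the pointwise control of $e_j$ together with the Taylor-type estimates used for the piecewise-linear reconstruction there; and $\|\motC[1]-m_1\|_{L^2(0,T;L^2(\E))}$ is read off from the difference of~\eqref{eqn_op_B_coupled_c} and~\eqref{eqn_disc_p1_reg_c}, since $d\ge\dm>0$ and every other term in that difference has already been bounded.
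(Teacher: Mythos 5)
Your setup matches the paper's proof exactly: eliminating the fluxes, identifying $\pj[1]$ as an implicit Euler iterate for $\dot p_1+(\wlap+a)p_1=\calK^\ast d^{-2}(\calK\dot p_0-\dot\f)$, and isolating the perturbation $\psi_j=\Dt\pjt-\dot\pot(t_j)$; also your identity $\psi_j=-(\wlap+a)\theta_j$ in $H^{-1}(\bE)$ is correct. The proof goes wrong at the point where you declare the direct estimate $\sum_j\tau\,\|\psi_j\|^2_{H^1(\bE)}=\O(\tau^2)$ unattainable (``would only be of order $\tau$''). That estimate is precisely what the paper proves, and obtaining it is the whole purpose of the compatibility hypotheses: setting $v_j:=\Dt\pjt$ for $j\ge 1$ and $v_0:=\pstar$, the iterates satisfy the \emph{exact} recursion
\begin{equation*}
  \Dt v_j+(\wlap+a)\,v_j=\Dt\widetilde{\F}_j,
  \qquad
  \widetilde{\F}_j:=\g(t_j)-\Ci^\ast\r(t_j)+\calK^\ast d^{-1}\f(t_j)-\Ce^-\dot\h(t_j),
  \quad \widetilde{\F}_0:=\widetilde{\F}(0),
\end{equation*}
i.e., they form an implicit Euler iteration for the time-differentiated parabolic problem solved by $\dot\pot$, started at the exact, compatible value $v_0=\dot\pot(0)=\pstar$. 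Rerunning the energy argument of Lemma~\ref{lem_disc_parabolic_estimate} for this differenced scheme gives
\begin{equation*}
  \|\psi_j\|^2+\cLap\sum_{k=1}^{j}\tau\,\|\psi_k\|^2_{H^1(\bE)}
  \lesssim \tau^2\int_0^{t_j}\|\pot^{(3)}(s)\|^2_{H^{-1}(\bE)}+\|\ddot{\widetilde{\F}}(s)\|^2_{H^{-1}(\bE)}\ds ,
\end{equation*}
with no order reduction and no initial layer. Your worry about ``order one on the first subinterval'' is unfounded exactly because the differenced scheme starts from well-prepared data: $\pstar\in H^1_0(\bE)$ makes the $j=1$ step consistent, and $\pstar'\in L^2(\E)$ supplies the regularity $\pot\in H^3(0,T;H^{-1}(\bE))\cap H^2(0,T;H^1(\bE))$ needed on the right-hand side.

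The detour you take instead cannot be closed under the stated hypotheses. First, the Abel-summation boundary term $\langle\calK^\ast d^{-2}\calK\,\theta_J,e_J\rangle$ is only controlled by $\|\theta_J\|_{H^1(\bE)}\|e_J\|_{H^1(\bE)}$, and the bounds actually available (namely $\tau\,\|\theta_J\|^2_{H^1(\bE)}\lesssim\tau^2$ from~\eqref{eqn_disc_pointwise}, and at best the analogous bound for $e_J$) give $\O(\tau^{1/2})\cdot\O(\tau^{1/2})=\O(\tau)$, not $\O(\tau^2)$; pointwise-in-time $\O(\tau)$ error bounds in $H^1(\bE)$ would require a separate argument you do not provide. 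Second, the companion estimate $\sum_j\tau\,\|\Dt e_j\|^2_{H^1(\bE)}=\O(\tau^2)$ ``by differencing the $p_1$-scheme'' produces a forcing perturbation $\calK^\ast d^{-2}\calK\,\Dt\psi_j$, whose summed $H^{-1}$-control requires $\sum_j\tau\,\|\Dt\psi_j\|^2_{H^1(\bE)}$, i.e.\ second differences of the $p_0$-iterates; this forces a second differencing of the $p_0$-scheme and hence compatibility ($\pstar'\in H^1_0(\bE)$) and data regularity beyond what the lemma assumes. Your dual-norm identity cannot be substituted at these places, since $\calK^\ast d^{-2}\calK$ acts only on $H^1$-functions and cannot be composed with the $H^{-1}$-representation $-(\wlap+a)\theta_j$. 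The same gap resurfaces in your last step: bounding $\motC[1]-m_1$ from the difference of~\eqref{eqn_op_B_coupled_c} and~\eqref{eqn_disc_p1_reg_c} needs $d^{-1}\calK\psi_j$ in $L^2(\E)$, i.e.\ once more the $H^1$-bound on $\psi_j$ that your route deliberately avoids. The repair is to drop the summation by parts and prove the direct $H^1$-estimate via the differenced $p_0$-scheme as above.
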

\begin{proof}
By Lemma~\ref{lem_weak_op_B_noeps_reg} we conclude that the solution $(p_0,p_1,m_0,m_1)$ satisfies
\begin{align*}
p_0 &\in H^3(0,T;H^{-1}(\bE)) \cap H^2(0,T;H^1(\bE)) \cap C^2(0,T;L^2(\E)), &  m_0 &\in H^2(0,T;L^2(\E)),\\
p_1 &\in H^2(0,T;H^{-1}(\bE)) \cap H^1(0,T;H^1(\bE)) \cap C^1(0,T;L^2(\E)), &  m_1 &\in H^1(0,T;L^2(\E)).
\end{align*}
Following the notation of the proof of Lemma~\ref{lem_disc_parabolic_estimate} and considering~$\Ce \pj[1] = 0$, we have~$\pj[1] = \pjt[1]$. Further, proceeding as in the derivation of~\eqref{eqn_disc_pointwise}, we obtain by~\eqref{eqn_disc_p1_reg_b} and~\eqref{eqn_disc_p1_reg_c} the estimate 
\begin{align}
  \| \pj[1] - p_1&(t_j)\|^2 + \cLap \sum_{k=1}^j \tau\, \| p_{1,k} - p_1(t_k)\|_{H^1(\bE)}^2 \notag \\
  \leq\ & \frac{2\tau^2}{3\cLap}  \int_{0}^{t_j} \|\ddot{p}_{1}(s)\|^2_{H^{-1}(\bE)} \ds + \frac{2\tau}{\cLap} \sum_{k=1}^j\, \big\| \calK^\ast d^{-2} \calK(\Dt \pkt - \dot \pot(t_k)) \big\|^2_{H^{-1}(\bE)}. \label{eqn_disc_pointwise_p1}
\end{align}
Note that we have the additional term $\Dt \pkt - \dot \pot(t_k)$, since $-d^{-1}\calK \dot p_0 = -d^{-1}\calK (\dot \pot +\Ce^- \dot h)$ in~\eqref{eqn_op_B_coupled_c} can be seen as a part of the right-hand side, which is not evaluated at $t_j$ but approximated by $-d^{-1}\calK \Dt p_{0,j}=-d^{-1}\calK(\Dt \pjt + \Ce^- \dot h(t_j))$. 

To estimate  the second part of the right-hand side of~\eqref{eqn_disc_pointwise_p1}, we realize that the operator $\calK^\ast d^{-2} \calK$ is linear and bounded from $H^1_0(\bE)$ to $H^{-1}(\bE)$. Furthermore, with a short calculation one proves that the discrete derivative $\Dt \pjt$ satisfies
\[ 
  \Dt (\Dt \pjt) + (\wlap + a) \Dt \pjt = \Dt (\g_j - \Ci^\ast \r_j + \calK^\ast d^{-1} f_j - \Ce^- \dot \h_j) 
  =: \Dt \widetilde{\F}_j \quad \text{in } H^{-1}(\bE)
\]
with $\widetilde{\F}_0 = \widetilde{\F}(0)$ and $\Dt \pt_{0,0} = \pstar$. Similar to~\eqref{eqn_disc_pointwise} and~\eqref{eqn_disc_pointwise_p1} we then get the estimate
\begin{align*}
  \| \Dt \pjt - \dot \pot&(t_j)\|^2 + \cLap \sum_{k=1}^j \tau\, \| \Dt \pkt - \dot \pot(t_k)\|_{H^1(\bE)}^2 \\
  \leq\,& \frac{2\tau^2}{3\cLap}  \int_{0}^{t_j} \|\pot^{(3)}(s)\|^2_{H^{-1}(\bE)} \ds + \frac{2\tau}{\cLap} \sum_{k=1}^j \| \Dt \widetilde{\F}_k - \dot{\widetilde{\F}}(t_k) \|^2_{H^{-1}(\bE)}.
\end{align*}
The claim finally follows by the steps of the proof of Lemma~\ref{lem_disc_parabolic_estimate} together with the estimate
\begin{equation*}
\| \Dt \widetilde{\F}_k - \dot{\widetilde{\F}}(t_k) \|^2_{H^{-1}(\bE)} 
  = \Big\| \frac{1}{\tau} \int_{t_{k-1}}^{t_k} (s-t_{k-1}) \ddot{\widetilde{\F}}(s) \ds \Big\|^2_{H^{-1}(\bE)} \leq \frac{\tau}{3} \int_{t_{k-1}}^{t_k} \| \ddot{\widetilde{\F}}(s)\|^2_{H^{-1}(\bE)} \ds. \qedhere
\end{equation*}
\end{proof}
Recall the definition of~$\hat p$ and~$\hat m$ from Section~\ref{sect:expansion}. Similarly, we now define their piecewise constant and piecewise linear approximations given by the Euler scheme~\eqref{eqn_disc_p1_reg}, i.e.,    
\[
  \phtC := \potC + \eps \potC[1], \qquad
  \mhtC := \motC + \eps \motC[1], \qquad
  \phtL := \potL + \eps \potL[1].
\]
This then leads to the following approximation result. 
\begin{theorem}\label{thm_euler_p1m1}
Let the assumptions of Theorem~\ref{thm_p1m1}, Lemma~\ref{lem_disc_parabolic_estimate}, and Lemma~\ref{lem_disc_parabolic_estimate_coupled} be satisfied together with~$m(0) = \hat m(0)$. Then, the piecewise constant functions~$\phtC, \mhtC$ and the piecewise linear function~$\phtL$ defined through~\eqref{eqn_disc_p1_reg} satisfy 
\begin{align*}
  \Vert p - \phtC \Vert_{L^2(0,T;H^1(\bE))}
  + \Vert m - \mhtC \Vert_{L^2(0,T;L^2(\E))}
  + \Vert p - \phtL \Vert_{C(0,T;L^2(\E))}
  \le (\eps^2 + \tau)\, C_\text{data}
\end{align*}
with a constant~$C_\text{data}$ only depending on the data. 	
\end{theorem}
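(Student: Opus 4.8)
The plan is to follow the assembly strategy already used in the proof of Theorem~\ref{thm_euler_p0m0} and split each of the three error contributions by the triangle inequality into an \emph{$\eps$-expansion error} and a \emph{time-discretization error}. For the first summand I write
\[
  \Vert p - \phtC\Vert_{L^2(0,T;H^1(\bE))}
  \le \Vert p - \hat p\Vert_{L^2(0,T;H^1(\bE))}
  + \Vert \hat p - \phtC\Vert_{L^2(0,T;H^1(\bE))},
\]
and analogously for $\Vert m - \mhtC\Vert_{L^2(0,T;L^2(\E))}$ and $\Vert p - \phtL\Vert_{C(0,T;L^2(\E))}$. The first term of each split is bounded through Theorem~\ref{thm_p1m1}, the second through the discretization estimates of Lemmas~\ref{lem_disc_parabolic_estimate} and~\ref{lem_disc_parabolic_estimate_coupled}.

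For the $\eps$-expansion errors, the additional hypothesis $m(0) = \hat m(0)$ is exactly the condition $\dot f(0) - \calK \dot p(0) = 0$, so the leading $\eps^3$-term in the estimate of Theorem~\ref{thm_p1m1} drops out and the squared quantities $\Vert (p-\hat p)(t)\Vert^2$, $\int_0^t \Vert(p-\hat p)(s)\Vert^2_{H^1(\bE)}\ds$, and $\int_0^t\Vert(m-\hat m)(s)\Vert^2\ds$ are all controlled by $\eps^4 \hat C_\text{data}$. Taking square roots yields $\Vert p - \hat p\Vert_{L^2(0,T;H^1(\bE))}$, $\Vert p - \hat p\Vert_{C(0,T;L^2(\E))}$, and $\Vert m - \hat m\Vert_{L^2(0,T;L^2(\E))}$ of order $\eps^2$, which accounts for the $\eps^2$ part of the claimed bound.

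For the discretization errors I would exploit the linearity of the definitions, namely $\hat p - \phtC = (p_0 - \potC) + \eps(p_1 - \potC[1])$, $\hat m - \mhtC = (m_0 - \motC) + \eps(m_1 - \motC[1])$, and $\hat p - \phtL = (p_0 - \potL) + \eps(p_1 - \potL[1])$. The crucial observation here, already recorded after system~\eqref{eqn_disc_p1_reg}, is that the $p_0$-, $m_0$-block of the coupled scheme~\eqref{eqn_disc_p1_reg} coincides with the first-order scheme~\eqref{eqn_disc_p0_reg}; hence $\potC$, $\motC$, and $\potL$ are precisely the iterates analyzed in Lemma~\ref{lem_disc_parabolic_estimate}, giving $\Vert p_0 - \potC\Vert_{L^2(0,T;H^1(\bE))}$, $\Vert m_0 - \motC\Vert_{L^2(0,T;L^2(\E))}$, and $\Vert p_0 - \potL\Vert_{C(0,T;L^2(\E))}$ of order $\tau C_\text{data}$. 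The $p_1$-, $m_1$-terms are controlled by Lemma~\ref{lem_disc_parabolic_estimate_coupled}, whose squared bound $\le \tau^2 C_\text{data}$ yields the norms $\le \tau C_\text{data}$ after taking roots. Since $\eps \le 1$, the prefactor $\eps$ in front of the $p_1$-, $m_1$-errors is harmless and can be absorbed into the constant, so each discretization contribution is $\le \tau C_\text{data}$.

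Combining the two parts by the triangle inequality then produces the asserted $(\eps^2 + \tau)\, C_\text{data}$ bound for all three norms. I expect no genuine obstacle: the heavy analysis resides in Theorem~\ref{thm_p1m1} and in the two discretization lemmas, and the present statement is essentially their superposition. The only points that require care are the $\eps$-order bookkeeping -- verifying that $m(0) = \hat m(0)$ indeed removes the $\eps^3$ contribution so that the square roots produce $\eps^2$ rather than $\eps^{3/2}$ -- and confirming that the hypotheses of Theorem~\ref{thm_p1m1}, Lemma~\ref{lem_disc_parabolic_estimate}, and Lemma~\ref{lem_disc_parabolic_estimate_coupled} are mutually consistent so that all three may be invoked simultaneously; this is already guaranteed, since the theorem assumes all of them by reference.
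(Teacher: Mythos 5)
Your proposal is correct and follows essentially the same route as the paper: a triangle-inequality decomposition into the $\eps$-expansion error (Theorem~\ref{thm_p1m1}, with $m(0)=\hat m(0)$ killing the $\eps^3$ term so the square root gives $\eps^2$) plus the two time-discretization errors for the $(p_0,m_0)$- and $(p_1,m_1)$-blocks (Lemmas~\ref{lem_disc_parabolic_estimate} and~\ref{lem_disc_parabolic_estimate_coupled}), absorbing the resulting $\eps\tau$ cross term into $(\eps^2+\tau)\,C_\text{data}$. The paper's proof is exactly this combination, written as a single three-term triangle inequality for each norm.
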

\begin{proof}
Again we simply combine the previous results. For~$\phtL$ we obtain  
\[
  \Vert (p - \phtL)(t) \Vert
  \le \Vert (p - \hat p)(t) \Vert + \Vert (p_0 - \potL)(t) \Vert + \Vert \eps (p_1 - \potL[1])(t) \Vert
  \le (\eps^2 + \tau + \eps \tau)\, C_\text{data}.
\]
The proofs for the two remaining parts are similar. 
\end{proof}
As for the first-order approximation in Section~\ref{sect:timeInt:first}, we finish this section with a discussion of the discretization by algebraically stable Runge-Kutta methods. 
\subsubsection{Runge-Kutta methods}\label{sect:timeInt:second:RK}
For the analysis of the approximation order of a general Runge-Kutta method applied to~\eqref{eqn_op_B_coupled}, we need to estimate the error between~$p_1(t_j)$ and its time-discrete counterpart~$\pj[1]$. As for implicit Euler scheme, we have to keep in mind that in the Runge-Kutta discretization of~\eqref{eqn_op_B_coupled_c} the term $\dot{\ps}_{0,j}= [ \dot p_0(t_{j-1} + \tau \fc_1),\ \dots,\ \dot p_0(t_{j-1} + \tau \fc_s)]^T$ is only approximated by $\fA^{-1} D_\tau \psj$. For the perturbation~$\fA^{-1} D_\tau \psj - \dot{\ps}_{0,j}$ we have with equation~\eqref{eqn_op_B_coupled_f} and Taylors theorem 
\begin{align*}
  \big(\fA^{-1} D_\tau \psj - \dot{\ps}_{0,j}\big)_i 
  &= \big(\fA^{-1} D_\tau \psjt - \dot{\pst}_{0,j}\big)_i\\
  & =  \frac{1}{\tau} \sum_{\ell=1}^s \fA^{-1}_{i\ell}\big[\big((\psjt)_\ell - \pt_0(t_{j-1}+\tau \fc_\ell)\big)- \big(\pjmt-\pt(t_{j-1})\big)\big] -R_{j,i}
\end{align*}
for $i=1,\ldots,s$. With~\eqref{eqn_RK_orders} the correction term~$R_{j,i}$ is given by
\begin{align*}
R_{j,i} = &\int_{t_{j-1}}^{t_{j-1} + \tau \fc_i} \frac{(t_{j-1} + \tau \fc_i -s)^{\qOrd-1}}{(\qOrd-1)!}\pt_0^{\,(\qOrd+1)}(s) \ds\\
 & \qquad \qquad - \sum_{\ell=1}^s \frac{\fA^{-1}_{i \ell}}{\tau}  \int_{t_{j-1}}^{t_{j-1} + \tau \fc_\ell} \frac{(t_{j-1} + \tau \fc_\ell -s)^{\qOrd}}{\qOrd!}\pt_0^{\,(\qOrd+1)}(s) \ds.
\end{align*}
Next, we combine~\cite[p.~605, Rem.~{(c)}]{LubO95}, estimate~\eqref{eqn_est_RK}, and 
\[
  \sum_{k=1}^j \tau\, \sum_{i=1}^s \| (\pskt)_i - \pt_0(t_{k-1} + \fc_i\tau)\|_{H^1(\bE)}^2 
  \lesssim \big(\tau^{\qOrd+1} \big)^2 \int_{0}^{t_j} \|\pot^{\, (\qOrd+1)}\|_{H^{1}(\bE)}^2 + \|\pot^{\, (\qOrd+2)}\|_{H^{-1}(\bE)}^2 \ds,
\]
which follows from \cite[Th.~1.1]{LubO95}. Together with~$p_1\in H^1_0(\bE)$ this then yields
\begin{align*}
&\| \pj[1] - p_1(t_j)\|^2 + \sum_{k=1}^j \tau\, \| p_{1,k} - p_1(t_k)\|_{H^1(\bE)}^2\\
  \lesssim\, & \tau^{2\qOrd}  \int_{0}^{t_j} \|p_1^{\, (\qOrd)}\|_{H^{1}(\bE)}^2 + \|p_1^{\, (\qOrd+1)}\|_{H^{-1}(\bE)}^2\ds + \sum_{k=1}^j \tau \| \calK^\ast d^{-2} \calK(\fA^{-1} D_\tau \psk - \dot{\ps}_{0,k})\|_{H^{-1}(\bE)}^2\\
  \leq\, &  \tau^{2\qOrd} \int_{0}^{t_j} \|\pot^{\, (\qOrd+1)}\|_{H^{1}(\bE)}^2 + \|p_1^{\, (\qOrd)}\|_{H^{1}(\bE)}^2 + \|\pot^{\, (\qOrd+2)}\|_{H^{-1}(\bE)}^2 + \|p_1^{\, (\qOrd+1)}\|_{H^{-1}(\bE)}^2\ds    + | h^{(\qOrd+2)}|^2 \ds.
\end{align*}
For the approximation of $\hat p$ and $\hat m$ we use once more piecewise polynomials. Since the approximation of $p_1$ by $\pj[1]$ is one order smaller then the approximation of $p_0$ by $\pj$, we consider 
\[ 
  \phtPol{\qOrd} := \potPol{0}{\qOrd}{,\tau} + \eps \potPol{1}{\qOrd-1}{,\tau}, \qquad \mhtPol{\qOrd} := \potPol{0}{\qOrd}{,\tau} + \eps \motPol{1}{\qOrd-1}{,\tau}. 
\]
Here, $\potPol{1}{\qOrd-1}{,\tau}$ is defined similarly to~$\potPol{0}{\qOrd}{,\tau}$ but with interpolation points~$(t_j,\pj[1])$. Analogously, the piecewise polynomial~$\motPol{1}{\qOrd-1}{,\tau}$ is defined based on~$(t_j,\mj[1])$.
\begin{theorem}
Let the assumptions of Theorem~\ref{thm_euler_p0m0} be satisfied together with~$m(0) = \hat m(0)$. Let $\phtPol{\qOrd}$ and $\mhtPol{\qOrd}$ defined through the Runge-Kutta discretization of~\eqref{eqn_op_B_coupled}. Then it holds that 
\begin{align*}
  \Vert p - \phtPol{\qOrd} \Vert_{L^2(0,T;H^1(\bE)) \cap C(0,T;L^2(\E))}
  + \Vert m - \mhtPol{\qOrd} \Vert_{L^2(0,T;L^2(\E))}
  \le (\eps^2+ \eps \tau^{\qOrd} + \tau^{\qOrd+1} )\, C_\text{data}
\end{align*}
with a constant~$C_\text{data}$ only depending on the data.
\end{theorem}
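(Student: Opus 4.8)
The plan is to follow the template of the proofs of Theorem~\ref{thm_euler_p0m0} and Theorem~\ref{thm_RK_p0m0}: decompose the total error into the modeling error of the second-order $\eps$-expansion and the time-discretization error, and treat the $p_0$-part and the $p_1$-part separately. Using $\phtPol{\qOrd}=\potPol{0}{\qOrd}{,\tau}+\eps\,\potPol{1}{\qOrd-1}{,\tau}$ together with $\hat p = p_0+\eps p_1$, the triangle inequality gives in the norm of $L^2(0,T;H^1(\bE))\cap C(0,T;L^2(\E))$
\[
  \Vert p-\phtPol{\qOrd}\Vert
  \le \Vert p-\hat p\Vert
     + \Vert p_0-\potPol{0}{\qOrd}{,\tau}\Vert
     + \eps\,\Vert p_1-\potPol{1}{\qOrd-1}{,\tau}\Vert,
\]
and analogously for $m-\mhtPol{\qOrd}$ in $L^2(0,T;L^2(\E))$. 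Under the stated regularity (which subsumes the hypotheses of Theorem~\ref{thm_p1m1} and Lemma~\ref{lem_disc_parabolic_estimate_coupled}) and the compatibility condition $m(0)=\hat m(0)$, the first term is bounded by $\eps^2\,C_\text{data}$ via Theorem~\ref{thm_p1m1}, while the second term is the Runge-Kutta error of the parabolic limit system and is bounded by $\tau^{\qOrd+1}\,C_\text{data}$ via Theorem~\ref{thm_RK_p0m0}.

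It remains to estimate the $p_1$- and $m_1$-contribution, which is the place where the order drops by one. The nodal error estimate derived immediately before the statement yields
\[
  \max_{1\le j\le n}\Vert \pj[1]-p_1(t_j)\Vert^2
  + \sum_{k=1}^n \tau\,\Vert p_{1,k}-p_1(t_k)\Vert^2_{H^1(\bE)}
  \lesssim \tau^{2\qOrd}\,C_\text{data},
\]
so the computed nodal values approximate $p_1(t_j)$ only to order $\tau^{\qOrd}$. From here I would repeat the interpolation argument of the proof of Theorem~\ref{thm_RK_p0m0}: split $\potPol{1}{\qOrd-1}{,\tau}-p_1$ into $\potPol{1}{\qOrd-1}{,\tau}-\Pi_{\qOrd-1}p_1$, controlled by the Lebesgue constant of the degree-$(\qOrd-1)$ Lagrange interpolation times the nodal errors above, and $\Pi_{\qOrd-1}p_1-p_1$, controlled by the standard polynomial-interpolation bound in $C(0,T;L^2(\E))$ and the finite-element estimate in $L^2(0,T;H^1(\bE))$. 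Both pieces are of order $\tau^{\qOrd}$, whence $\Vert p_1-\potPol{1}{\qOrd-1}{,\tau}\Vert\lesssim\tau^{\qOrd}\,C_\text{data}$, and multiplication by $\eps$ produces the announced term $\eps\,\tau^{\qOrd}$. The corresponding bound for $m_1-\motPol{1}{\qOrd-1}{,\tau}$ follows from the algebraic relations \eqref{eqn_op_B_coupled_c} and \eqref{eqn_disc_p1_reg_c}, which express the nodal error of $m_1$ through the $H^1(\bE)$-error of $p_1$ and the perturbation $\fA^{-1}D_\tau\psj-\dot{\ps}_{0,j}$, both of order $\tau^{\qOrd}$.

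Collecting the three contributions yields the total bound $(\eps^2+\tau^{\qOrd+1}+\eps\,\tau^{\qOrd})\,C_\text{data}$, and the estimate for $m$ is obtained identically, with Theorem~\ref{thm_RK_p0m0} supplying the $\tau^{\qOrd+1}$-bound for $m_0-\motPol{0}{\qOrd}{,\tau}$ in $L^2(0,T;L^2(\E))$. I expect the main obstacle to be precisely the reduced convergence order of $p_1$: since the coupling term $\dot m_0$ in \eqref{eqn_op_B_coupled_c} is realized through $-d^{-1}\calK\dot p_0$ and the scheme reproduces $\dot p_0$ only up to the stage order, a perturbation $\fA^{-1}D_\tau\psj-\dot{\ps}_{0,j}$ of size $\tau^{\qOrd}$ enters the right-hand side of the $p_1$-equation, and it must be controlled in $H^{-1}(\bE)$ through the bounded operator $\calK^\ast d^{-2}\calK$ before being fed into the Lubich–Ostermann convergence theory~\cite{LubO95}. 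Since this delicate analysis has already been carried out in the derivation preceding the theorem, the proof here reduces to the interpolation bookkeeping and the triangle inequality above.
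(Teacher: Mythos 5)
Your proposal is correct and takes essentially the same route as the paper: the paper's proof is a one-line combination of Theorem~\ref{thm_euler_p1m1} (triangle inequality splitting the error into the $\eps$-expansion error $p-\hat p$, the discretization error of $p_0$, and $\eps$ times the discretization error of $p_1$) with Theorem~\ref{thm_RK_p0m0}, which is precisely the decomposition you carry out. The details you fill in -- the nodal estimate of order $\tau^{\qOrd}$ for $p_1$ derived just before the theorem, the interpolation bookkeeping copied from the proof of Theorem~\ref{thm_RK_p0m0}, and the recovery of the $m_1$-error from the algebraic relation -- are exactly what the paper leaves implicit.
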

\begin{proof}
The statement can be shown similarly as Theorem~\ref{thm_euler_p1m1}, in combination with Theorem~\ref{thm_RK_p0m0}.
\end{proof}
\section{Numerical Experiments}\label{sect:numerics}
In this final section, we aim to illustrate the proven convergence results as well as the differences in the approximation orders in terms of~$\eps$ in the finite and infinite dimensional case. 
%
\subsection{Convergence in $\tau$}\label{sect:numerics:tau}
In this first experiment we show the performance of the proposed numerical scheme based on the combination of an $\eps$-expansion and well-known time stepping methods. For this, we consider the network shown in Figure~\ref{fig_network} where all pipes have unit length. We assume~$\g\equiv 0$ and~$\f\equiv 1$ on the edges~$e_2$, $e_4$ and zero otherwise. The initial data is given by
\[
p^{e_1}(x)|_{t=0} = x, \qquad
p^{e_j}(x)|_{t=0} = 1,\qquad
p^{e_6}(x)|_{t=0} = 1-x 
\]
for $j=2,\dots, 5$ and, in the case $\eps>0$, 
\[
m^{e_1}|_{t=0} \equiv -1,\qquad
m^{e_2}|_{t=0} = m^{e_4}|_{t=0} = m^{e_6}|_{t=0} \equiv 1,\qquad
m^{e_3}|_{t=0} = m^{e_5}|_{t=0} \equiv 0.
\]
We consider~$\eps=10^{-3}$ and set the damping parameter to~$d\equiv 1$. Note that this implies consistency of the initial condition. Homogeneous Dirichlet boundary conditions for~$p$ are stated in the nodes~$v_1$ and~$v_6$, whereas the coupling conditions are defined through 
\[
  \r(v_2) = -2, \qquad
  \r(v_3) = 0, \qquad  
  \r(v_4) = -1, \qquad  
  \r(v_5) = 1.
\]
Figure~\ref{fig_tau} compares the approximations of first and second order in~$\eps$, 
based on the implicit Euler scheme and the Radau IIa method of third order. As predicted in Section~\ref{sect:timeInt}, the difference of the exact pressure~$p$ and the computed approximation of~$p_0$ measured in the $C(0,1;L^2(\E))$-norm converges linearly (or quadratically in the Radau IIa case, cf.~Section~\ref{sect:timeInt:second:RK}) in time up to the point where the approximation error of the~$\eps$-expansion dominates. Since~$\hat p$ approximates~$p$ with order two in terms of~$\eps$, the overall error achieves a lower level if we include the numerical approximation of~$\eps p_1$. 
For the computations we have used a finite element discretization in space with~$10$ grid points on each pipe. The pressure~$p$ is approximated by~$P_1$ finite elements, whereas the approximation of~$m$ is piecewise constant. 
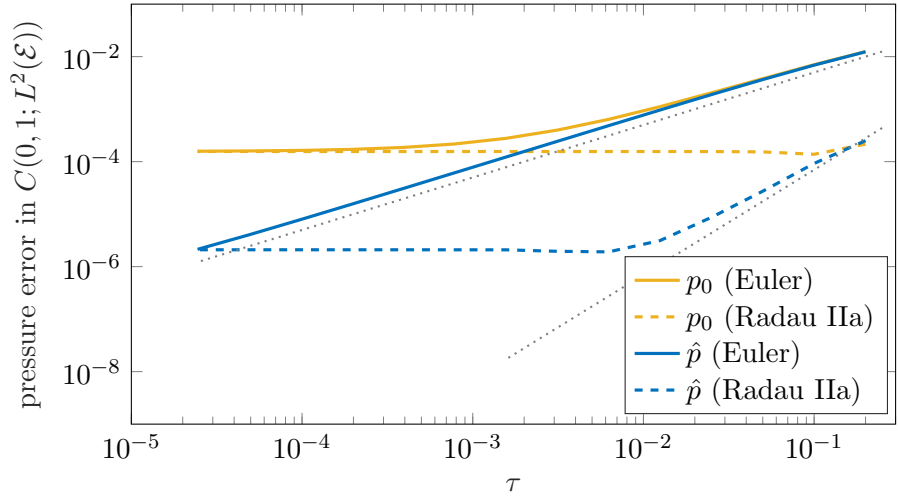
\begin{figure}
\label{fig_tau}	
%
%
\definecolor{mycolor1}{rgb}{0,0.447,0.741}%
\definecolor{mycolor3}{rgb}{0.92900,0.69400,0.12500}%
\begin{tikzpicture}

\begin{axis}[%
width=4.0in,
height=2.2in,
scale only axis,
xmode=log,
xmin=1e-05,
xmax=0.3,
xminorticks=true,
xlabel style={font=\color{white!15!black}},
xlabel={$\tau$},
ymode=log,
ymin=1e-9,
ymax=1e-01,
yminorticks=true,
ylabel style={font=\color{white!15!black}},
ylabel={pressure error in $C(0,1;L^2(\E))$},
axis background/.style={fill=white},
title style={font=\bfseries},
legend style={legend cell align=left, align=left, draw=white!15!black, at={(0.99,0.4)}}
]
\addplot[color=mycolor3, very thick]
  table[row sep=crcr]{%
0.2	0.0124936496581605\\
0.1	0.00702667100993455\\
0.05	0.00380745429361191\\
0.025	0.00204287079597019\\
0.0125	0.00111435391685351\\
0.00625	0.000639055708623285\\
0.003125	0.000398522128928422\\
0.0015625	0.000277524966771344\\
0.00078125	0.000216840315550603\\
0.000390625	0.000186451812303794\\
0.0001953125	0.000171246009680683\\
9.765625e-05	0.000163640212960962\\
4.8828125e-05	0.000159836590899132\\
2.44140625e-05	0.000157934598244863\\
};
\addlegendentry{$p_0$ (Euler)}

\addplot[color=mycolor3, very thick, dashed]
  table[row sep=crcr]{%
0.2	0.000215808716391835\\
0.1	0.000138142376310191\\
0.05	0.000153400266856253\\
0.025	0.000155724706324736\\
0.0125	0.000155979657777239\\
0.00625	0.000156025372083365\\
0.003125	0.000156030416623089\\
0.0015625	0.000156032381869654\\
0.00078125	0.000156032445197912\\
0.000390625	0.000156032454893568\\
0.0001953125	0.000156032485628268\\
9.765625e-05	0.000156032485751988\\
4.8828125e-05	0.000156032485767387\\
2.44140625e-05	0.000156032485769495\\
};
\addlegendentry{$p_0$ (Radau IIa)}

\addplot [color=mycolor1, very thick]
  table[row sep=crcr]{%
0.2	0.0123649025725807\\
0.1	0.00688921513620136\\
0.05	0.00366085699669655\\
0.025	0.00189190089444856\\
0.0125	0.000961024159488122\\
0.00625	0.000484506333908571\\
0.003125	0.000243337090630523\\
0.0015625	0.00012201931870917\\
0.00078125	6.11748419376243e-05\\
0.000390625	3.07069365466615e-05\\
0.0001953125	1.5462758052717e-05\\
9.765625e-05	7.84035084668304e-06\\
4.8828125e-05	4.03286438126604e-06\\
2.44140625e-05	2.13573782634654e-06\\
};
\addlegendentry{$\hat p$ (Euler)}

\addplot [color=mycolor1, very thick, dashed]
  table[row sep=crcr]{%
0.2	0.000250709059872132\\
0.1	9.25083828710109e-05\\
0.05	2.73692551904481e-05\\
0.025	8.68765525316826e-06\\
0.0125	3.13419952106706e-06\\
0.00625	1.91129877636189e-06\\
0.003125	1.96006333160555e-06\\
0.0015625	2.09026659416377e-06\\
0.00078125	2.0933359627572e-06\\
0.000390625	2.09522028395319e-06\\
0.0001953125	2.0954837721248e-06\\
9.765625e-05	2.09551885055916e-06\\
4.8828125e-05	2.09552338684882e-06\\
2.44140625e-05	2.09552396349777e-06\\
};
\addlegendentry{$\hat p$ (Radau IIa)}

\addplot[color=gray, thick, dotted]
  table[row sep=crcr]{%
0.25	0.0125\\
0.111111111111111	0.00555555555555556\\
0.0526315789473684	0.00263157894736842\\
0.0256410256410256	0.00128205128205128\\
0.0126582278481013	0.000632911392405063\\
0.00628930817610063	0.000314465408805031\\
0.00313479623824451	0.000156739811912226\\
0.00156494522691706	7.82472613458529e-05\\
0.000781860828772478	3.90930414386239e-05\\
0.000390777647518562	1.95388823759281e-05\\
0.000195350654424692	9.76753272123462e-06\\
9.76657876745776e-05	4.88328938372888e-06\\
4.8830509302212e-05	2.4415254651106e-06\\
2.4414658561e-05	1.22073292805e-06\\
};

\addplot[color=gray, thick, dotted]
  table[row sep=crcr]{%
0.25	0.0004375\\
0.111111111111111	8.64197530864197e-05\\
0.0526315789473684	1.93905817174515e-05\\
0.0256410256410256	4.60223537146614e-06\\
0.0126582278481013	1.12161512578112e-06\\
0.00628930817610063	2.76887781337764e-07\\
0.00313479623824451	6.87886321871837e-08\\
0.00156494522691706	1.71433749427534e-08\\
};

\end{axis}
\end{tikzpicture}%
\caption{Convergence history for the example of Section~\ref{sect:numerics:tau}. The expected convergence rates can be observed up to the point where the error of $\eps$-expansion dominates. The dotted lines show orders~$1$ and~$2$.}
\end{figure}
%
\subsection{Convergence in $\eps$}
The second numerical experiment illustrates the transition of the approximation order in terms of~$\eps$ from the finite to the infinite dimensional case, cf.~Remark~\ref{rem_finiteinfinite}. Recall that Theorem~\ref{thm_p0m0} implies 
\[
\|p-p_0\|_{C(0,T;\,L^2)} 
\le C_\text{cons} \sqrt{\eps} + \O(\eps)
\]
in the infinite dimensional setting with $C_\text{cons} = 0$ if the initial data is consistent. In finite dimensions one shows~$\|p-p_0\|_{C(0,T;\,\R^n)} = \O(\eps)$, independent of the initial data, see~\cite[Ch.~2.5, Th.~5.1]{KokKO99}. We emphasize that this is an asymptotic result whereas the bound in Theorem~\ref{thm_p0m0} is valid for all~$\eps>0$. 

Considering spatial discretizations of an infinite dimensional system, one expects a smooth transition of the approximation order. To see this, we consider the problem described in~Appendix~\ref{app_sharpness} of a single pipe of unit length with constant damping $d=1$. We assume homogeneous Dirichlet boundary conditions for the pressure and inconsistent initial data given by
\[ 
  p(x,0) = 0 \quad \text{and} \quad 
  m(x,0)= \sum_{k=1}^\infty \frac{\cos(\pi k x)}{\pi k^{0.55}}.
\]
As before, we consider $P_1$ finite elements for the pressure and piecewise constants for the mass flux. To estimate the error $\|p-p_0\|_{C(0,T;\,\R^n)}$ we take the norm which is induced by the mass matrix, i.e., the norm associated to the $L^2(0,1)$-norm. To approximate the order in~$\eps$ depending on the spatial discretization parameter~$h$, we calculate for a fixed step size~$h$ the norms 
\[
  \operatorname{err}(\eps) : =\|p_h(\, \cdot\, ; \eps)-p_{0,h}\|_{C(0,T;\,\R^n)}, \qquad
  \eps= 1/(8\sqrt{2^i}) 
\]
for $i=1,\ldots,16$. Using the ansatz $\operatorname{err}(\eps)= C(h)\,\eps^\alpha$, or equivalently~$\log \operatorname{err}(\eps)= \alpha \log \eps + \log C(h)$, we estimate the exponent~$\alpha$ by a linear least squares problem. The results are illustrated in Figure~\ref{fig_eps}. As expected, the exponential order~$\alpha$ decreases as the spatial mesh size~$h$ becomes smaller. One can clearly see the beginning of an asymptotic behavior with a limit much smaller than~$1$. Following the analysis in Appendix~\ref{app_sharpness}, we expect the limit of the fitted exponential order~$\alpha$ to be~$0.55$.
\begin{figure}
	\label{fig_eps}		
%
%
%
%
\definecolor{mycolor1}{rgb}{0,0.447,0.741}%
\begin{tikzpicture}

\begin{axis}[%
width=4.0in,
height=2.0in,
scale only axis,
separate axis lines,
every outer x axis line/.append style={darkgray!60!black},
every x tick label/.append style={font=\color{darkgray!60!black}},
xmode=log,
xmin=0.0005,
xmax=0.166666666666667,
xminorticks=true,
xlabel={spatial mesh size $h$},
every outer y axis line/.append style={darkgray!60!black},
every y tick label/.append style={font=\color{darkgray!60!black}},
ymin=0.55,
ymax=0.8,
ylabel={$\text{fitted exponent }\alpha$},
legend style={draw=darkgray!60!black,fill=white,legend cell align=left}
]
\addplot [
color=mycolor1,
very thick,
solid,
forget plot
]
table[row sep=crcr]{
0.166666666666667 0.774071912560396\\
0.0909090909090909 0.714349547832569\\
0.0476190476190476 0.664539118688238\\
0.024390243902439 0.629840041467353\\
0.0123456790123457 0.606529674139336\\
0.0062111801242236 0.590460840349867\\
0.00311526479750779 0.579161728995493\\
0.0015600624024961 0.570638150591638\\
0.0007806401249024 0.563924960309423\\
};
\end{axis}
\end{tikzpicture}%
	\caption{Estimate of the exponential order~$\alpha$ in~$\operatorname{err}(\eps)= C(h)\,\eps^\alpha$ as a function of the spatial mesh size~$h$.}
\end{figure}			
%
%
\section{Conclusion}\label{sect_conclusion}
In this paper, we have derived time discretization methods for coupled systems of hyperbolic equations including a small parameter~$\eps>0$. An expansion of the solution in this parameter is analyzed and combined with Runge-Kutta methods applied to the limit equation for~$\eps=0$.  
The basis for the convergence proof is given by a number of existence results for the involved parabolic and hyperbolic PDAE models. These findings are in line with the results presented in~\cite{EggK18}, although they are based on a different weak formulation. Further, the present paper offers extensions in view of an additional damping term, the possibility of time-dependent right-hand sides, and higher order approximations in terms of~$\eps$. 
%
%
\section*{Acknowledgement}
Major parts of the paper were evolved at Mathematisches Forschungsinstitut Oberwolfach within a {\em Research in Pairs} stay in February 2018. We are grateful for the invitation and kind hospitality.
%
%
\newcommand{\etalchar}[1]{$^{#1}$}

\appendix
\newpage
\section{Collection of Equation Systems}
\noindent
The original hyperbolic system with coupling conditions \eqref{eqn_op_B_eps} has the form
\begin{alignat*}{5}
	\dot p &\ +\ &  a p &\ -\ &\calK^* m & + \Ce^*\lambda &\ =\ &\g -\Ci^*\r  &&\qquad \text{in }[H^1(\bE)]^*,\\
	\eps\, \dot m  &\ +\ &\calK p &\ +\ &d m &   &\ =\ & \f &&\qquad \text{in }[L^2(\E)]^*,\\
	& & \Ce p & & &  &\ =\ &  \h &&\qquad \text{in }\R^{\Nin}.
\end{alignat*}
Setting $\eps=0$, we obtain the constrained parabolic system~\eqref{eqn_op_B_noEps} for $(p_0, m_0, \lambda_0)$ with initial condition $p_0(0)=p(0)$, namely 
\begin{alignat*}{5}
	\dot p_0&\ +\ & a p_0&\ -\ &\calK^* m_0&\ +\ & \Ce^*\lambda_0 &= \g-\Ci^*\r  &&\qquad \text{in }[H^1(\bE)]^*,\\
	& & \calK p_0&\ +\ & d m_0& & &= \f  &&\qquad \text{in }[L^2(\E)]^*,\\
	& & \Ce p_0& & & &  &= \h &&\qquad \text{in } \R^{\Nin}.
\end{alignat*}
For a higher-order approximation we consider system~\eqref{eqn_op_B_approx_1} for $(p_1, m_1, \lambda_1)$ with $p_1(0)=0$,
\begin{alignat*}{5}
 \dot p_1 &\,+\, & a p_1 &\, - \, & \calK^* m_1 &\, +\, &\Ce^*\lambda_1 &= 0  &&\qquad \text{in }[H^1(\bE)]^*,\\
 & & \calK p_1 &\, +\, & d m_1 & & &= -\dot m_0  &&\qquad \text{in }[L^2(\E)]^*,\\
 & & \Ce p_1 &  & & & &= 0 &&\qquad \text{in } \R^{\Nin}.
\end{alignat*}
For the analysis we use the auxiliary problem~\eqref{eqn_op_B_station} with solution $(\pb,\mb,\lb)$, 
\begin{alignat*}{5}
	a \pb &\ -\ &\calK^* \mb & + \Ce^*\lb &\ =\ &-\Ci^*\r  &&\qquad \text{in }[H^1(\bE)]^*,\\
	\calK \pb &\ +\ &d \mb &   &\ =\ &  0 &&\qquad \text{in }[L^2(\E)]^*,\\
	\Ce \pb & & &  &\ =\ &  0 &&\qquad \text{in }\R^{\Nin}.
\end{alignat*}
The first-order differences $p-p_0$ and $m-m_0$ satisfy system~\eqref{eqn_inproof_pp0}, i.e.,  
\begin{alignat*}{5}
	\ddt (p-p_0)&\,+\,& a (p-p_0)&\,-\,& \calK^* (m-m_0) &\,=  0  &&\qquad \text{in } H^{-1}(\bE),\\
	&  & \calK (p-p_0)&\,+\,& d (m-m_0) &\,=   -\eps\, \dot m &&\qquad \text{in }[L^2(\E)]^*.
\end{alignat*}
The second-order differences $p-\hat p$ and $m-\hat m$ are solutions of the system
\begin{alignat*}{5}
 \ddt (p-\hat p) &\, + \,& a(p-\hat p) &\, -\, & \calK^* (m- \hat m) &= 0  &&\qquad \text{in }H^{-1}(\bE), \\
 & & \calK (p-\hat p)&\, +\, & d (m-\hat m) &= -\eps\, (\dot m - \dot m_0) &&\qquad \text{in }[L^2(\E)]^*
\end{alignat*}
In certain proofs we consider $\pt := p - \pb - \Ce^-\h$ and $\mt := m - \mb$, which satisfy system~\eqref{eqn_inproof_ph}, 
\begin{alignat*}{5}
\dot \pt &\ +\ & a\pt &\ -\ & \calK^* \mt &= \g-\dot \pb - a\Ce^- \h - \Ce^- \dot \h  && \qquad \text{in } H^{-1}(\bE),\\
\eps\, \dot \mt &\ +\ & \calK \pt &\ +\ & d \mt &= \f - \calK \Ce^- \h - \eps\, \dot \mb  && \qquad \text{in } [L^2(\E)]^*.
\end{alignat*}
%
%
\section{Generalized System Equations}
\label{app_generalization}
The results in Sections~\ref{sect:formulation} - \ref{sect:timeInt} remain valid for certain generalizations of the considered model~\eqref{eqn_op_B_eps}, which we discuss here. Let $\P$, $\calM$, and $\Q$ denote the Hilbert spaces in which we search for the solution components $p$, $m$, and $\lambda$, respectively. We assume that $\P$ forms a Gelfand triple with pivot space~$\calH$, i.e., $\P\, {\hookrightarrow}\, \calH \cong \calH^\ast\, {\hookrightarrow}\, \P^\ast$ where all embeddings are dense. Then, system~\eqref{eqn_op_B_eps} may be generalized to
\begin{equation*}
\begin{alignedat}{5}
	\dot p&\ +\ &\A p &\ -\ &\calK^\ast m & + \Ce^*\lambda\ &=&\ \g_1 + \g_2 &&\qquad \text{in }\P^*,\\
	\eps\, \dot m&\ +\ &\calK p &\ +\ &\D m & &=&\ \f  &&\qquad \text{in }\calM^*,\\
	& & \Ce p & & & &=&\ \h &&\qquad \text{in } \Q^\ast
\end{alignedat}
\end{equation*}
with linear and bounded operators
\[
  \A\colon \calH \to \calH^\ast, \qquad 
  \calK\colon \P \to \calM^\ast, \qquad
  \D\colon \calM \to \calM^\ast, \qquad 
  \Ce \colon \P\to \Q^\ast.
\]
Further, we assume that the operator~$\D$ is elliptic, $\A$ non-negative, $\Ce$ inf-sup stable, and that there exists a constant~$c_{\calK}>0$ such that
$c_{\calK}\|q\|_{\P} \le \|\calK q \|_{\calM^\ast} $ for all $q \in \ker \Ce =: \P_{\ker}$. The right-hand sides are of the form 
\[
  \f\colon[0,T] \to \calM^\ast, \qquad 
  \g_1\colon[0,T] \to \calH, \qquad 
  \g_2\colon[0,T] \to \P^\ast, \qquad 
  \h\colon[0,T] \to \Q^\ast.
\]
An initial value of~$p$ is then called {\em consistent} if the difference $p(0)-\Ce^- \h(0)$ in an element of the closure of $\P_{\ker}$ in $\calH$, which we denote by~$\calH_{\ker}$ in the sequel. 

The present paper considers the particular case~$\P=H^1(\bE)$, $\calH=\calM=L^2(\E)$, and $\Q=\R^\Nin$. The kernel of~$\Ce$ is given by $\P_{\ker}=H^1_0(\bE)$ and~$\calH_{\ker}$ equals~$L^2(\E)$, since $H^1_0(\bE)$ is dense in~$L^2(\E)$. For the operators we have $\A= a$ and $\D =d$, whereas for the right hand side of~\eqref{eqn_op_B_eps_a} we consider~$\g_1=\g$ and $\g_2=-\Ci^\ast \r$. We emphasize that -- apart from Lemmas~\ref{lem_wlap} and~\ref{lem_unbounded_A} -- we have never made use of the specific operators within the results of Sections~\ref{sect:formulation} - \ref{sect:timeInt}. As a consequence, all results can be generalized assuming~$p(0)$ to be consistent and~$\pstar$ to be an element of $\calH_{\ker}$ or $\P_{\ker}$, respectively.

For the generalization of Lemma~\ref{lem_wlap} one shows the ellipticity of the operator~$\wlap:= \calK^\ast \D^{-1} \calK\colon \P \to \P^\ast$ on~$\P_{\ker}$ by 
\begin{align*}
\langle \wlap \pt, \pt \rangle  = \langle  \D^{-1}\calK \pt, \D \D^{-1} \calK \pt \rangle_{\calM,\calM^\ast} \geq c_{\D} \| \D^{-1}\calK \pt \|^2_{\calM} \geq \frac{c_{\D}}{C_{\D}^2} \| \D \D^{-1} \calK \pt \|^2_{\calM^\ast} \geq  \frac{c_{\D}c_\calK}{C_{\D}^2} \| \pt \|^2_{\P}
\end{align*}
for all $\pt\in \P_{\ker}$. The included constants~$c_{\D}$ and~$C_{\D}$ are the ellipticity and continuity constants of~$\D$. In this general setting, Lemma~\ref{lem_unbounded_A} would consider the operator 
\begin{equation*}
A_\beta:=\begin{bmatrix}
- \beta \A & \calK^\ast\\
- \calK & -\D/\beta
\end{bmatrix} \colon D(A_\beta)\subset (\calH_{\ker} \times \calM) \to \calH_{\ker} \times \calM
\end{equation*}
for arbitrary positive~$\beta > 0$ and with the domain 
\[ 
  D(A_\beta) 
  = \P_{\ker} \times  \big\{ m \in \calM \,|\ \exists\, \mstar \in \calH_{\ker} \colon (\mstar, \pt) 
  = \langle \calK^* m, \pt\rangle\ \text{for all } \pt\in \P_{\ker} \big\} .
\]
Note that for every $p \in \P_{\ker}$, the term~$\A p$ is an element of~$\calH$. Since $\calH_{\ker}$ is a closed subspace of~$\calH$, there exists an element~$\hstar \in \calH_{\ker}$ with $(\hstar,\pt)= \langle \A p,\pt \rangle$ for all $\pt\in \P_{\ker}$. 
For the proof of Lemma~\ref{lem_unbounded_A}, it remains to show that~$A_\beta$ is densely defined. Since~$D(A_\beta)$ is independent of $\beta$, we may fix~$\beta =1$. Let $(h,m)\in \calH_{\ker}\times \calM$ be arbitrary. By the Gelfand triple $\P_{\ker}, \calH_{\ker}, \P_{\ker}^\ast$, there exist for every~$\eps >0$ elements $\pt_{h} \in \P_{\ker}$ and $\widetilde{\g} \in \calH_{\ker}$ with $\|\pt_{h} - h\|_{\calH} < \eps$ and $\| \widetilde{\g}  - (\A \pt_{h} - \calK^\ast m) \|_{\P_{\ker}^\ast} < \eps$. Following the steps of the proof of Lemma~\ref{lem_op_B_station}, we can show that~$-A_1$ as a mapping from $\P_{\ker} \times \calM$ to $\P_{\ker}^\ast \times \calM^\ast$ has a bounded inverse. Now, let $(\pt^{\, \prime}, m^{\prime}) \in \P_{\ker} \times \calM$ be the unique solution of 
\begin{equation*}
\begin{alignedat}{5}
	\A \pt^{\, \prime} &\ -\ &\calK^\ast m^\prime &=&&\ \widetilde{g} &&\qquad \text{in }\P^*_{\ker},\\
	\calK \pt^{\, \prime} &\ +\ &\D  m^\prime &=&&\ \calK \pt_{h} + \D  m  &&\qquad \text{in }\calM^*.
\end{alignedat}
\end{equation*}
By the construction of~$\hstar$, one shows that $(\pt^{\, \prime}, m^{\prime}) \in D(A_1)=D(A_\beta)$. We finally choose $(\pt^{\, \prime}, m^{\prime})$ as approximation of $(h,m)$ and we get with the boundedness of~$-A^{-1}_{1}$ the estimate 
\begin{align*}
  \| h-\pt^{\, \prime}\|_{\calH} + \| m -m^\prime\|_{\calM} 
  &\lesssim \| h-\pt_{h}\|_{\calH} + \|\pt_{h}-\pt^{\, \prime}\|_{\P} + \| m -m^\prime\|_{\calM} \\
  &\lesssim \| h-\pt_{h}\|_{\calH} + \| \widetilde{\g}  - (\A \pt_{h} - \calK^\ast m) \|_{\P_{\ker}^\ast} 
  < 2\, \eps.
\end{align*}
%
%
\section{On the Sharpness of the Estimates}\label{app_sharpness}
In this section, we prove that the estimates of Theorem~\ref{thm_p0m0} and~\ref{thm_p0m0_2} are sharp in terms of powers of~$\eps$. For this, we consider problem~\eqref{eqn_op_B_eps} on a single edge of length one. The damping parameter is constant~$d=1$ and homogeneous boundary conditions are considered for the pressure~$p$. Note that under this configuration, one can rewrite the problem for $p$ as 
\begin{equation}
\label{eqn_problem_p_app_B}
 \eps \ddot p + \dot p -\partial_{xx} p =0,
\end{equation} 
with initial data $p(x,0)$ and $\dot p (x,0)=-\partial_x m(x,0)$.

First, we consider Theorem~\ref{thm_p0m0_2}, where we choose a vanishing initial condition for the pressure and 
\[
  m(x,0) = \sum_{k=1}^\infty \frac{\cos(\pi k x)}{\pi k^\alpha}, \qquad \alpha >1/2.
\]
Note that $m(x,0)$ is an~$L^2(0,1)$-function, since $\|m(x,0)\|^2_{L^2(0,1)}=\sum_{k=1}^\infty \frac{1}{2\pi^2 k^{2\alpha}}<\infty$, but that we do not have~$\partial_x p(x,0) = - m(x,0)$. Let~$\eps$ be chosen such that 
\[ 
  K(\eps) = \frac{1}{2 \pi \sqrt{\eps}} \in \N.
\]
Then the (mild) solution is given by
\begin{align*}
p(x,t;\eps)=& e^{-t/2\eps}\Bigg[\sum_{k=1}^{K(\eps)-1}  \frac{\eps\sinh\Big(\frac{t}{\eps} \sqrt{\tfrac 1 4 -\eps(\pi k)^2}\Big)}{\sqrt{\tfrac 1 4 -\eps(\pi k)^2}} k^{1-\alpha} \sin(\pi k x) + t K^{1-\alpha}(\eps) \sin(\pi K(\eps) x)\\
 &\qquad \quad + \sum_{k=K(\eps)+1}^\infty \frac{\eps\sin\Big(\frac{t}{\eps} \sqrt{\eps(\pi k)^2- \tfrac 1 4}\Big)}{\sqrt{\eps(\pi k)^2- \tfrac 1 4}} k^{1-\alpha} \sin(\pi k x)\Bigg]\\
m(x,t;\eps)=&  e^{-t/2\eps}\Bigg[\sum_{k=1}^{K(\eps)-1} \bigg\{\cosh\Big(\frac{t}{\eps} \sqrt{\tfrac 1 4 -\eps(\pi k)^2}\Big) - \frac{\sinh\Big(\frac{t}{\eps} \sqrt{\tfrac 1 4 -\eps(\pi k)^2}\Big)}{2\sqrt{\tfrac 1 4 -\eps(\pi k)^2}}\bigg\} \frac{\cos(\pi k x)}{\pi k^\alpha}\\
&\qquad \quad + \Big\{1- \frac{t}{2\eps}\Big\}\frac{\cos(\pi K(\eps) x)}{\pi K^\alpha(\eps)}\\
&\qquad \quad + \sum_{k=K(\eps)+1}^{\infty} \bigg\{\cos\Big(\frac{t}{\eps} \sqrt{\eps(\pi k)^2 - \tfrac 1 4 }\Big) - \frac{\sin\Big(\frac{t}{\eps} \sqrt{\eps(\pi k)^2 - \tfrac 1 4 }\Big)}{2\sqrt{\eps(\pi k)^2 - \tfrac 1 4 }}\bigg\} \frac{\cos(\pi k x)}{\pi k^\alpha}\Bigg]\\
\lambda(x,t;\eps)=& \begin{bmatrix}
-m(0,t;\eps) \\ m(1,t;\eps)
\end{bmatrix},
\end{align*}
which one gets by a separation of variables in~\eqref{eqn_problem_p_app_B}.
For the parabolic limit case ($\eps =0$) the solution is $p_0=m_0=\lambda_0=0$. Therefore, the norm $\|p(\cdot,\cdot\,;\eps)\|^2_{C([0,T];L^2(0,1))}$ is bounded from above by $\O(\eps)$ independently of $\alpha$, see Theorem~\ref{thm_p0m0_2}. For a lower bound, we notice that 
\begin{align*}
\|p(\cdot,\cdot;\eps)\|^2_{C([0,T];L^2(0,1))} &\geq \|p(\cdot,\eps; \eps)\|^2_{L^2(0,1)}\\
 &\geq  \frac{\eps^2}{2e} \sum_{k=K(\eps)+1}^\infty \frac{k^{2-2\alpha}}{\eps(\pi k)^2- \tfrac 1 4} \sin^2\big(\sqrt{\eps(\pi k)^2- \tfrac 1 4}\big)\\
   &= \frac{2\eps^2 K^2(\eps)}{e} \sum_{k=1}^\infty \frac{(K(\eps)+k)^{2-2\alpha}}{(K(\eps)+k)^2- K^2(\eps)} \sin^2\Big(\tfrac{1}{2}\sqrt{\tfrac{2k}{K(\eps)} + (\tfrac{k}{K(\eps)})^2}\Big)\\
   &\geq \frac{\eps}{8 e \pi^2}  \sum_{k=K(\eps)\Big\lceil\sqrt{1+(\tfrac{\pi}{3})^2}-1\Big\rceil}^{K(\eps)\Big\lfloor\sqrt{1+(\tfrac{5\pi}{3})^2}-1\Big\rfloor} \frac{1}{(K(\eps)+k)^{2\alpha}}\\
   &\geq \frac{\eps K(\eps)}{8 e \pi^2 K^{2\alpha}(\eps)}  \frac{\Big\lfloor\sqrt{1+(\tfrac{5\pi}{3})^2} -1 \Big\rfloor - \Big\lceil\sqrt{1+( \tfrac{\vphantom{5}\pi}{3})^2}  -1 \Big\rceil}{\big(\lfloor\sqrt{1+(\tfrac{5\pi}{3})^2}\rfloor\big)^{2\alpha}}\\
   &= \O(\eps^{1/2+\alpha}).
\end{align*}
The limit $\alpha \to 1/2$ then shows that the $\O(\eps)$-bound in Theorem~\ref{thm_p0m0_2} is indeed sharp. The corresponding result for Theorem~\ref{thm_p0m0} can be shown with the initial data $m(x,0)=0$ and 
\[
  p(x,0)=\sum_{k=1}^\infty \frac{\sin(\pi k x)}{k^{1+\alpha}} \in H^1(0,1).
\]
%
%
\end{document}